\newcommand{\N}{\mathbb{N}}                                              
\newcommand{\Z}{\mathbb{Z}}                                              
\newcommand{\Q}{\mathcal{Q}}                                              
\newcommand{\R}{\mathbb{R}}                                              
\newcommand{\T}{\mathbb{T}}
\renewcommand{\P}{\mathbb{P}}
\newcommand{\E}{\mathbb{E}}
\newcommand{\Y}{\mathcal Y}
\newcommand{\A}{\mathcal{A}}
\renewcommand{\S}{\mathcal{S}}
\newcommand{\cL}{\mathcal{L}}
\newcommand{\cQ}{\mathcal{Q}}
\newcommand{\cH}{\mathcal{H}}
\newcommand{\cD}{\mathcal{D}}
\newcommand{\cC}{\mathcal{C}}
\renewcommand{\le}{\leqslant}
\renewcommand{\ge}{\geqslant}
\renewcommand{\tilde}{\widetilde}
\newcommand{\m}{\mathbf{m}}
\renewcommand{\bar}{\overline}
\newcommand{\benum}{\begin{enumerate}}
\newcommand{\eenum}{\end{enumerate}}
\newcommand{\bitem}{\begin{itemize}}
\newcommand{\eitem}{\end{itemize}}
\newcommand{\barray}{\begin{array}}
\newcommand{\earray}{\end{array}}
\newcommand{\ps}[1]{\langle #1\rangle}
\newcommand{\bps}[1]{\big\langle #1\big\rangle}
\newcommand{\vertiii}[1]{{\big\vert\kern-0.25ex\big\vert\kern-0.25ex\big\vert #1 
    \big\vert\kern-0.25ex\big\vert\kern-0.25ex\big\vert}}
\newtheorem{theo}{\textsc{Theorem}}[section]
\newtheorem{cor}[theo]{\textsc{Corollary}}
\newtheorem{lem}[theo]{\textsc{Lemma}}
\newtheorem{prop}[theo]{\textsc{Proposition}}
\newtheorem{de}{\textsc{Definition}}[section]
\theoremstyle{remark}
\newtheorem{rem}{\textsc{Remark}}[section]
\newtheorem{ex}{\textsc{Example}}[section]
\newtheorem{exe}{\textsc{Exercice}}[section]
\newcommand{\btheo}[1]{\begin{theo} #1\end{theo}}
\newcommand{\bprop}[1]{\begin{prop} #1\end{prop}}
\newcommand{\bcor}[1]{\begin{cor} #1\end{cor}}
\newcommand{\bprf}[1]{\begin{proof} #1\end{proof}}
\newcommand{\brem}[1]{\begin{rem} #1\end{rem}}
\newcommand{\blem}[1]{\begin{lem} #1\end{lem}}
\newcommand{\bde}[1]{\begin{de} #1\end{de}}
\newcommand{\ccl}[1]{{#1}}
\begin{document}

\thanks{\footnotesize \textbf{Acknowledgements.} This problem was suggested by C\'edric Bernardin, and we are grateful to him for helpful and valuable 
remarks. We warmly thank Makiko Sasada and Stefano Olla for their interest and constructive discussions on this work. 
M.S. also would like to express her great appreciation to the anonymous referees of the first form of this article, who significantly helped improve the final version.\\
This work has been partially supported by the fellowship L'Or\'eal France-UNESCO \emph{For Women in Science}. The authors thank Labex CEMPI (ANR-11-LABX-0007-01) for its support. C.E. acknowledges support
from the European Research Council (ERC) under the European Union's
Horizon 2020 research and innovation programme (ERC CoG UniCoSM, grant
agreement n\textsuperscript{o}724939). M.S. acknowledges support from the ERC under  the European Union's Horizon 2020 research and innovative programme (grant agreement  n\textsuperscript{o}715734), and from the project EDNHS ANR-14-CE25-0011 of the French National Research Agency (ANR).
}

\tableofcontents

\section{Introduction}

This paper deals with diffusive behaviour in heterogeneous media for interacting particle systems. More precisely, we address the problem of 
energy fluctuations for chains of oscillators with random defects. In the last fifty years, it has been recognized that introducing disorder 
in interacting particle systems has a drastic effect on the conduction properties of the material \cite{CL}. The most mathematically 
tractable model of oscillators is the one-dimensional system with harmonic interactions \cite{MR2784282}. The anharmonic case is poorly understood from 
a mathematical point of view, but since the works of Peierls \cite{P1,P2}, it is admitted that non-linear interactions between atoms should play a crucial 
role in the derivation of the Fourier law. In \cite{MR2480742, MR2185330,MR2082192} (among many others) the authors propose to model anharmonicity by 
stochastic perturbations, in order to recover the expected macroscopic behavior: in some sense, the noise  simulates the effect of non-linearities.  Being inspired by all these previous works, the aim of this paper is to prove 
the diffusive energy behavior of disordered harmonic chains perturbed by an energy conserving noise.  Moreover we prove that all the disorder effects  are, on a 
sufficiently large scale, contained in a diffusion coefficient, which depends on the statistics of the field, but not on the randomness itself.

On the one hand, the disorder effect has already been investigated for \emph{lattice gas dynamics}: the first article dealing with scaling limits of particle systems in random 
environment is the remarkable work of Fritz \cite{F}, and since then the subject has attracted a lot of interest, see for example 
\cite{MR2021195,MR2446327,MR2540160,MR2271489}. These papers share one main feature: the models are \emph{non-gradient}\footnote{Roughly speaking, the \emph{gradient property} states that the microscopic current (of density, or energy, depending on the conservation law under consideration) can be decomposed as a local gradient. We refer to Section \ref{ssec:current} for more details.} due to the presence of the disorder. 
Except in \cite{F}, non-gradient systems are usually solved by establishing a microscopic Fourier law up to a small fluctuating term, following the 
sophisticated method initially developed by Varadhan in \cite{MR1354152}, and generalized to non-reversible dynamics in \cite{MR1653397}.  
These previous works mostly deal with systems of particles that evolve according to an exclusion process in random environment: the particles are attempting 
jumps to nearest neighbour sites at rates which depend on both their position and the objective site, and the rates themselves come from a quenched 
random field. Different approaches are adopted to tackle the non-gradient feature: whereas the standard method of Varadhan is helpful  in dimension 
$d \geqslant 3$ only (see \cite{MR2021195}), the ``long jump'' variation developed by Quastel in  \cite{MR2271489} is valid in any dimension. 
The study of disordered chains of oscillators perturbed by a conservative noise has appeared more recently, see for instance 
\cite{MR2448630, MR3101849, DLK}. In all these papers, the thermal conductivity is defined by the Green-Kubo formula only.  
Here, we also define the diffusion coefficient  through hydrodynamics and we prove that both definitions are equivalent. 

On the other hand, the study of \emph{one-dimensional chains of oscillators} is an active field of research. In \cite{MR3810840}, the authors derive the diffusive scaling limit for a \emph{homogeneous} (without disorder) chain of coupled harmonic oscillators, perturbed by a noise which 
randomly flips the sign of the velocities (called \emph{velocity-flip noise}), so that the energy is conserved but not the momentum. 
We want to investigate here the scaling limit of equilibrium fluctuations for the same chain of harmonic oscillators, still perturbed by the velocity-flip noise, but now provided with i.i.d.~random masses. In \cite{Sim}, for the same model, an exact \textit{fluctuation-dissipation relation} 
(see for example \cite{MR1465163})  reproduces the Fourier law at the microscopic level. With random masses, however, the fluctuation-dissipation equations are no longer directly 
 solvable. We therefore adapt Varadhan's non-gradient approach, which allows one to show that an approximate fluctuation-dissipation decomposition holds. The main ingredients of the usual non-reversible non-gradient method are: first, a 
 \emph{spectral gap estimate} for the symmetric part of the dynamics, and second, a \emph{sector condition} for the total generator. 

The rigorous study of the disordered harmonic chain perturbed by the velocity-flip noise contains three major obstacles: (i) first, the symmetric part of 
the generator (which, in our case, comes only from the stochastic noise) is poorly ergodic, and does not have a spectral gap when restricted to micro-canonical manifolds. 
This issue is usually critical to apply Varadhan's  method ;
(ii) second, the degeneracy of the perturbation implies that the asymmetric part of the generator cannot be controlled by its symmetric part (in technical terms, 
the sector condition does not hold) ; (iii) finally, the energy current depends on the disorder, and has to be approximated 
by a fluctuation-dissipation equation which takes into account the fluctuations of the disorder itself.

\bigskip

To overcome the second obstacle (ii), namely the lack of sector condition due to the high degeneracy of the velocity-flip noise, we add a second stochastic perturbation, that exchanges velocities (divided by the square 
root of mass) and positions at random independent Poissonian times, so that a kind of sector condition can be proved (see Proposition \ref{prop:sector}: 
we call it the \emph{weak sector condition}). However, the spectral gap estimate and the usual sector condition still do not hold when adding the exchange 
noise, meaning that the stochastic perturbation remains very degenerate; in other words, the noises are still far from ergodic. 
To sum up, the final model that we rigorously investigate here is: the coupled harmonic chain with  random masses, perturbed by two degenerate stochastic noises, one which exchanges velocities and positions, the other one which flips the sign of velocities. 
The main results and contributions of this article are \begin{itemize}
\item an adaptation of the non-gradient method to a microscopic model for which neither the spectral gap inequality nor the sector condition hold (Proposition \ref{prop:macro2} and Lemma \ref{lem:def_d}), which makes no use of the closed forms theory. In particular, the main novelty is Lemma \ref{lem:firststep};
 \item the macroscopic behavior of the equilibrium fluctuations of the energy, which is diffusive, with a diffusion coefficient $D$ depending only on the statistics of the random masses field (Theorem \ref{theo:fluctuations});
 \item the equivalence between two definitions of the diffusion coefficient: the one obtained via Varadhan's approach, and the one obtained from the Green-Kubo formula (Theorem \ref{theo:GKF}). 
\end{itemize}
Our model has one crucial feature, that makes an adaptation of Varadhan's approach possible despite the lack of spectral gap of the symmetric part of the generator: 
thanks to the harmonicity of the chain, the  generator of the dynamics preserves homogeneous polynomials together 
with their degree. In particular, the derivation of the  sector condition and the non-gradient decomposition of closed discrete differential forms at the center of the non-gradient method (see 
\cite{MR2895558, MR1707314} for more details) can be carried out rather explicitly in a suitable space of quadratic functions, without need for a spectral gap. 
Although some further complications appear in the study of our model, this is one of its clear advantages. It allows us to avoid significant 
technical difficulties usually inherent to Varadhan's approach, and to adapt the latter to a model with a very degenerate noise. 
In particular, if the chain is not assumed to be harmonic, a stronger noise than ours is generally needed: the one proposed by Olla and Sasada in \cite{sasolla} 
is strong enough so that the spectral gap and the sector 
condition  hold, and they were able to use ideas from  Varadhan's approach to determine the scaling limit of the equilibrium fluctuations. 
Our purpose here is to show, using elements of the non-gradient method as well, that in the presence of i.i.d.~random masses, the annealed (i.e. averaged out over the masses' randomness) equilibrium fluctuations of the energy evolve following 
an infinite Ornstein-Uhlenbeck process. The covariances characterizing this linearized heat equation are given in terms 
of the diffusion coefficient, which is defined through a variational formula.
We opted for a rather detailed redaction, even if some proofs may look standard to expert readers. We hope that this 
choice will be beneficial for the reader not already familiar with the non-gradient  method. 
 
Finally, we also show that the diffusion coefficient can be equivalently given by the Green-Kubo formula. The latter is defined as the space-time 
variance of the current at equilibrium, which is only formal in the sense that a double limit (in space and time) has to be taken. As in \cite{MR2448630}, 
where the disordered harmonic chain is perturbed by a stronger energy conserving noise, we prove here that the limit exists, and that the homogenization 
effect occurs for the Green-Kubo formula: for almost every realization of the disorder, the thermal conductivity exists, is independent of the disorder, 
is positive and finite. This allows us to prove that the diffusion coefficient $D$ obtained through the variational formula in Varadhan's method, and the coefficient
$\overline D$ defined through the Green-Kubo formula, are actually equal: $ D=\overline D$. 


\bigskip

To conclude this introduction, we introduce in more details the model on which this article focuses. As explained earlier, we consider here an infinite harmonic hamiltonian system 
described by the sequence $\{p_x,r_x\}_{x \in \Z}$, where $p_x$ stands for the momentum of the oscillator at site $x$, and $r_x$ represents 
the distance between oscillator $x$ and oscillator $x+1$.  Each atom $x \in \Z$ has a mass $M_x >0$, thus the velocity of atom $x$ is given by $p_x/{M_x}$.
We assume the disorder $\mathbf M:=\{M_x\}_{x \in \Z}$ to be a collection of real i.i.d.~positive random variables such that 
\begin{equation} 
\forall \ x \in \Z,\quad \frac1C \leqslant M_x \leqslant C, \label{eq:bb}
\end{equation} 
for some finite constant $C>0$.   The equations of motions are given by 
\begin{equation}
\left\{ \begin{aligned} \frac{\text d p_x}{\text d t}&=r_{x}-r_{x-1}, \\
 \frac{\text d r_x}{\text d t}&=\frac{p_{x+1}}{M_{x+1}}-\frac{p_{x}}{M_{x}}, \end{aligned}\right. \label{eq:syste}
\end{equation}
so that the dynamics conserves the total energy \[ \mathcal{E}:=\sum_{x\in\Z} \bigg\{\frac{p_x^2}{2M_x} + \frac{r_x^2}{2}\bigg\}.\]
To overcome the  lack of ergodicity of deterministic chains\footnote{For the deterministic system of harmonic oscillators, it is well known that the energy is ballistic, destroying the validity of the Fourier law. For more details, see the remarkable work of Lebowitz, Lieb and Rieder \cite{LLR}, which is the standard reference.}, we add a stochastic perturbation to \eqref{eq:syste}.  The noise can be easily described: at independently distributed random Poissonian times, the quantity $p_x/\sqrt{M_x}$ and the interdistance $r_x$ are exchanged, or the momentum $p_x$ is flipped into $-p_x$. This noise still conserves the total energy $\mathcal{E}$, and is very degenerate. The main goal of this paper is to prove that the energy fluctuations in equilibrium converge in a suitable space-time scaling limit (Theorem \ref{theo:fluctuations}).

Even if Theorem \ref{theo:fluctuations} could be proved \emph{mutatis mutandis} for this harmonic chain described by $\{p_x,r_x\}$,  for pedagogical reasons we now  focus on a simplified model, which has  the same features and involves simplified computations\footnote{We invite the reader to see \cite{MR2904271} for the origin of this new particle system.}.  From now on, we study the dynamics on new configurations $\{\eta_x\}_{x\in\Z} \in \R^{\Z}$ written as 
\begin{equation} 
{m}_x \text d \eta_x = (\eta_{x+1}-\eta_{x-1}) \text d t, \label{eq:motion2} 
\end{equation}
where $\m:=\{m_x\}_{x \in \Z}$ is the new disorder with the same characteristics as in \eqref{eq:bb}. 
It is notationally convenient to change the variable $\eta_x$ into $\omega_x:=\sqrt{ m_x} \eta_x$, so that the total energy reads \[ \mathcal{E}=\sum_{x\in\Z} \omega_x^2.\] Let us now introduce the corresponding stochastic energy conserving dynamics: the evolution is described by \eqref{eq:motion2} between random exponential times, and at each ring one of the following interactions can happen:

\emph{a. Exchange noise -- } the two nearest neighbour variables $\omega_x$ and $\omega_{x+1}$ are exchanged; 

\emph{b. Flip noise -- }  the variable $\omega_x$ at site $x$ is flipped into $-\omega_{x}$.

\noindent As a consequence of these two perturbations, the dynamics only  conserves the total energy, the other important conservation laws of the hamiltonian part being destroyed by the stochastic noises\footnote{It is now well understood that the ballisticity of the harmonic chain is due to the infinite number of conserved quantities. In 1994, Fritz, Funaki and Lebowitz \cite{FFL} propose  different stochastic noises that are sufficient to destroy the ballisticity of the chain: the \emph{velocity-flip} noise is one of them.}. It is not difficult to check that the following family  $\{\mu_\beta\}_{\beta > 0}$ of \textit{grand-canonical Gibbs measures} on $\R^\Z$ is invariant for the  resulting process $\{\omega_x(t) \; ; \; x \in \Z, t\geq 0\}$:  
\begin{equation} 
\text d \mu_\beta(\omega):=\prod_{x \in \Z} \sqrt{\frac{\beta}{2\pi}}\exp\left(-\frac\beta 2 \omega^2_x\right)\text d \omega_x. 
\label{eq:mesinva}
\end{equation}
The index $\beta$ stands for the inverse temperature. Note that with our notational convenience, $\mu_\beta$ does not depend on the disorder $\m$. Observe also that the dynamics is not reversible with respect to the measure $\mu_\beta$. We define $\mathbf e_\beta:=\beta^{-1}$ as the thermodynamical energy associated to $\beta$, namely the expectation of $\omega^2_0$ with respect to $\mu_\beta$, and $\chi(\beta)=2\beta^{-2}$ as the static compressibility, namely the variance of $\omega^2_0$ with respect to $\mu_\beta$.  

To state the convergence result, let us define the distribution-valued energy fluctuation field, as follows: at time 0,  it is given by
\[ \mathcal{Y}_0^N:=\frac{1}{\sqrt N} \sum_{x\in\Z} \delta_{x/N} \big\{ \omega^2_x(0)-\mathbf{e}_\beta \big\}, \] 
where $\delta_u$ is the Dirac measure at point $u\in\mathbb{R}$. We assume that the dynamics is at equilibrium, namely that $\{\omega_x(0)\}_{x\in\Z}$ is distributed according to 
the Gibbs measure $\mu_\beta$. It is well known that $\mathcal Y_0^N$ converges in distribution as $N\to\infty$ towards a  centered Gaussian field $\mathcal Y$, 
which satisfies \[\E_{\mathcal{Y}}\big[ \mathcal{Y}(F) \mathcal{Y}(G)\big] = \chi(\beta) \int_\R F(y) G(y) \text dy,\] for continuous test functions $F,G$. 
One of the main results of this article, Theorem \ref{theo:fluctuations} below,  states that the energy fluctuations evolve diffusively in time: starting from  $\mu_\beta$, and averaging over the disorder $\m$, 
the energy  field 
\[ \mathcal Y_t^N=\frac{1}{\sqrt N}\sum_{x\in\Z} \delta_{x/N} \big\{\omega^2_x(tN^2)-\mathbf e_\beta\big\} \] 
converges in distribution as $N \to \infty$ to the solution of the linear Stochastic Partial Differential Equation (SPDE) 
\[ \partial_t \mathcal Y=D \partial_y^2 \mathcal Y +\sqrt{2D \chi(\beta)} \partial_y W, \qquad t>0, y\in \R. \] 
where $D $ is the diffusion coefficient, defined by variational formula (see Definition \ref{def:diffusion} below), and $W$ is the standard normalized space-time white noise. Let us note here that the coefficient $D$ does not depend on $\beta$. This claim is proved below in Section \ref{ssec:products} (cf. Definition \ref{def:diffusion}). 

Finally, one could think of using the well-known \textit{entropy method} \cite{GPV} to further derive the \textit{hydrodynamic equation}: in that case, the initial law is not assumed to be the equilibrium measure $\mu_\beta$, but a \emph{local equilibrium measure}. We conjecture that this property of local equilibrium propagates in time, and that an \textit{hydrodynamic limit} result holds. Let $\mathbf e_0: \T\to\R$ be a bounded function, where $\T$ denotes the torus $[0,1)$. One would like  to show that the empirical energy profile $\frac1N\sum_x\delta_{x/N}\; \omega_x^2(tN^2)$ converges  as $N\to\infty$ to the macroscopic profile $\mathbf e(t,\cdot):\T\to\R$ solution to
\[\left\{\begin{aligned} \frac{\partial \mathbf e}{\partial t}(t,u)  & = D \frac{\partial^2 \mathbf e}{\partial u^2}(t,u),  \qquad \qquad t>0, \ u \in \T, \\
\mathbf e(0,u)& =\mathbf e_0(u). \end{aligned}\right.\]
Unfortunately, even if the diffusion coefficient $D$ is well defined through the non-gradient approach, this does not straightforwardly provide a method to prove such a result\footnote{More details about the main obstacles to prove hydrodynamic limits are available in the \texttt{ArXiv} version of this paper: \href{https://arxiv.org/abs/1402.3617}{https://arxiv.org/abs/1402.3617}}.

\bigskip

Let us now give the outline of the article.   Section \ref{sec:model} is devoted to properly introducing the model and all  definitions that are needed, as well as stating our main results, namely Theorem \ref{theo:fluctuations} (macroscopic equilibrium fluctuations) and Theorem \ref{theo:GKF} (Green-Kubo formula) below.  
The first theorem, namely the convergence of the energy fluctuations field  (in the sense of finite dimensional distributions) is proved in Section \ref{ssec:martingale}, up to technical results which are postponed in further sections. Indeed, the main point is to identify the diffusion coefficient $D$  by adapting the non-gradient method of Varadhan \cite{MR1354152}. 
This is done in several steps: in Section \ref{sec:CLTvariances_equ}, we derive the so-called \textit{Boltzmann-Gibbs principle} stated in Proposition \ref{prop:macro2}. In Section \ref{sec:hilbert} we obtain  the diffusion coefficient as resulting from a projection of the current in some suitable Hilbert space, 
and finally Section \ref{sec:diffusion} improves the description of the diffusion coefficient through several variational formulas (see Remark \ref{rem:DDt}). 
In Section \ref{sec:gk} we prove the second theorem in two steps: first, we show the convergence of the Green-Kubo formula (Proposition \ref{prop:conv_GK}), and  second, we  demonstrate rigorously that both definitions of the diffusion coefficient are equivalent (Proposition \ref{prop:equivalence}). 
Finally, in Section \ref{sec:anharmonic}, we present a second disordered model, where the interaction is described by a potential $V$ which is not assumed to be harmonic anymore. 
For this anharmonic chain, we need a very strong stochastic perturbation, which has a spectral gap, and satisfies the sector condition.  
In Appendices, technical points are detailed: in Appendix \ref{sec:hermite}, the space of square-integrable functions w.r.t.~the standard Gaussian law is studied through  
its orthonormal basis of Hermite polynomials.   The sector condition (Proposition \ref{prop:seccond}) is proved in Appendix \ref{app:sector} for a specific class of functions suitable for our needs.   
In Appendix \ref{sec:tightness}, tightness for the energy fluctuation field (Proposition \ref{prop:tight}) is investigated for the sake of completeness. 

\section{The harmonic chain perturbed by stochastic  noises\label{sec:model}}

\subsection{Generator of the Markov process \label{sec:generator}} 

Let us define the dynamics on the finite torus $\T_N:=\Z/N\Z$, meaning that boundary conditions are periodic. 
The space of configurations is given by $\Omega_N=\R^{\T_N}$. The configuration $\{\omega_x\}_{x\in\T_N}$ evolves according to a dynamics which can be divided into two parts, 
a deterministic one and a stochastic one.  The disorder is an i.i.d.~sequence ${\mathbf m}=\{ m_x\}_{x\in \T_N}$ which satisfies:  
\begin{equation*} \forall \ x \in \T_N,\quad \frac1C \leqslant m_x \leqslant C, \end{equation*} 
for some finite constant $C>1$. 
The corresponding product and translation invariant measure on the space $\Omega_N^{\cD}=[C^{-1},C]^{\T_N}$ is denoted by $\P^N$ and its expectation is denoted by $\E^N$.  Their infinite volume counterparts on $\Z$ are denoted by $\Omega^{\cD}$,  $\P$ and $\E$.

For a fixed disorder field $\mathbf m=\{m_x\}_{x\in\T_N}$, we consider the system of equations
\begin{equation*} \sqrt{{m}_x} \text d \omega_x = \bigg(\frac{\omega_{x+1}}{\sqrt{m_{x+1}}}-\frac{\omega_{x-1}}{\sqrt{m_{x-1}}}\bigg) \text d t, \qquad t \geqslant 0, \ x \in \T_N,  \end{equation*}
 and we superpose to this deterministic dynamics a stochastic perturbation described as follows: 
 with each atom $x \in \T_N$ (respectively each bond $\{x,x+1\}, x \in \T_N$) is associated an exponential clock of rate $\gamma >0$ (resp. $\lambda>0$), 
and all clocks are independent one from another. When the clock attached to the atom $x$ rings, $\omega_x$ is flipped into $-\omega_x$. 
 When the clock attached to the bond $\{x,x+1\}$ rings, the values $\omega_x$ and $\omega_{x+1}$ are exchanged. 
 This dynamics can be equivalently defined by the generator $\cL_N^{\mathbf{m}}$ of the Markov process $\{\omega_x(t)\, ; \, x\in\T_N\}_{t\geqslant 0}$, which is written as
\begin{equation*}{\cL_N^{\mathbf m}= \A_N^{\mathbf m}+\gamma \S_N^{\text{flip}}+\lambda \S_N^{\text{exch}}},\end{equation*}
where 
\[
\A_N^\m  = \sum_{x \in \T_N} \bigg(\frac{\omega_{x+1}}{\sqrt{m_xm_{x+1}}}-\frac{\omega_{x-1}}{\sqrt{m_{x-1}m_x}}\bigg) \frac{\partial}{\partial \omega_x}, \]
and, for all functions $f:\Omega^{\cD}_N\times \Omega_N \to \R,$
\begin{align*}
\S_N^{\text{flip}} f(\m,\omega)&= \sum_{x\in\T_N}\Big(f(\m,\omega^x)-f(\m,\omega)\Big),\\
\S_N^{\text{exch}} f(\m,\omega) &=\sum_{x \in \T_N} \Big(f(\m,\omega^{x,x+1})-f(\m,\omega)\Big).
.\end{align*}
We  denote  $\cL^\m$, $\A^{\m}$, $\S^{\text{flip}}$ and $\S^{\text{exch}}$ their infinite volume counterparts for which $\T_N$ has been replaced by $\Z$. In the formulas above, the configuration $\omega^{x}$ is the configuration obtained from $\omega$ by flipping the value at site $x$:  
\[(\omega^x)_z = \begin{cases} \omega_z &\text{ if } z \neq x,\\ -\omega_x & \text{ if } z=x, \end{cases}\] 
and the configuration $\omega^{x,x+1}$ is obtained from $\omega$ by exchanging the values at sites $x$ and $x+1$:  
\[ (\omega^{x,x+1})_z= \begin{cases} \omega_z & \text{ if } z \neq x,x+1, \\ \omega_{x+1} & \text{ if } z=x, \\ \omega_x &\text{ if } z=x+1. \end{cases} \] 
We denote the total generator of the noise by $\S_N:=\gamma \S_N^{\text{flip}}+\lambda \S_N^{\text{exch}}$ (and similarly $\S=\gamma \S^{\text{flip}}+\lambda \S^{\text{exch}}$ in infinite volume). 

It is straightforward to see that the total energy $\sum_{x\in \T_N}\omega_x^2$ is conserved by the dynamics and that the following translation 
invariant product Gibbs measures $\mu^N_\beta$ on $\Omega_N$ are invariant for the Markov process:
\[\text d\mu_\beta^N(\omega):=\prod_{x \in \T_N} \sqrt{\frac{\beta}{2\pi}} \exp\left(-\frac{\beta}{2}\omega_x^2\right) \text d\omega_x.\] 
The index $\beta$ \ccl{is a constant that} stands for the inverse temperature, \ccl{and satisfies} $\int \omega_0^2 \text{d}\mu_\beta^N=\beta^{-1}$. Let us note that the Gibbs measures 
do not depend on the disorder $\mathbf m$.
From the definition, our model is not reversible with respect to the measure $\mu_\beta^N$. More precisely, $\A_N^{\mathbf m}$ is an antisymmetric operator in 
$\mathbf{L}^2(\mu_\beta^N)$, whereas $\S_N$ is symmetric.

\ccl{
We denote by $\mu_\beta$ the product Gibbs measure on the infinite configuration space $\Omega:=\R^\Z$.} The scalar product in $\mathbf{L}^2(\mu_\beta)$ is denoted by $\langle \cdot, \cdot \rangle_\beta$. 
Moreover, we denote by $\P^\star_\beta$ \ccl{(resp. $\P^N_\beta$)} the probability measure on $\ccl{\Omega^{\cD}}\times\Omega$ \ccl{(resp. $\Omega^{\cD}_N\times\Omega_N$)} defined by 
\begin{equation}
\label{eq:DefPbstar}
\P^\star_\beta:=\P\otimes\mu_{\beta}\ccl{\quad (\mbox{resp.}\quad \P^N_\beta:=\P^N\otimes\mu^N_{\beta}\;)}.
\end{equation}
Throughout this article we will frequently use the fact  that $\P^\star_\beta$ \ccl{and $\P^N_\beta$} are translation invariant. We write $\E^\star_\beta$ \ccl{(resp. $\E^N_\beta$)} for the corresponding expectation, and $\E^\star_\beta[\cdot,\cdot]$ for the scalar product in $\mathbf{L}^2(\P^\star_\beta)$.  We also define the {static compressibility} which is equal to the variance of the one-site energy $\omega_0^2$ with respect to $\mu_\beta$, namely
\begin{equation} 
\label{eq:Defchi}
\chi(\beta):=\langle \omega_0^4 \rangle_\beta - \langle \omega_0^2\rangle_\beta^2=\frac{2}{\beta^2}. 
\end{equation}

\subsection{Energy current} \label{ssec:current}
Since the dynamics \ccl{locally conserves the energy, one can write 
\begin{equation}
\label{eq:Lj}
\cL_N^{\mathbf m}(\omega^2_x)=j_{x-1,x}(\mathbf m,\omega)-j_{x,x+1}(\mathbf m,\omega).
\end{equation} 
where $j_{x,x+1}$ is} the instantaneous \ccl{net  amount of  energy flowing from the particle $x$ to the particle $x+1$}, and is equal to  
\begin{equation*} 
j_{x,x+1}(\mathbf m,\omega)=-\frac{2\omega_x\omega_{x+1}}{\sqrt{m_xm_{x+1}}} + \lambda(\omega_{x}^2-\omega_{x+1}^2). 
\end{equation*} 
We write $j_{x,x+1}=j_{x,x+1}^A+j_{x,x+1}^S$ where $j_{x,x+1}^A$ (resp. $j_{x,x+1}^S$) is the current associated to the antisymmetric 
(resp. symmetric) part of the generator: 
\begin{align}
j_{x,x+1}^A(\mathbf m,\omega)&=-\frac{2\omega_x\omega_{x+1}}{\sqrt{m_xm_{x+1}}} \label{eq:DefjA}\\
j_{x,x+1}^S(\mathbf m,\omega)&=j_{x,x+1}^S(\omega)=\lambda(\omega_{x}^2-\omega_{x+1}^2)\label{eq:DefjS}.
\end{align}
\ccl{Note that \eqref{eq:Lj} \eqref{eq:DefjA} and \eqref{eq:DefjS} still hold if $\cL_N^{\mathbf m}$ is replaced by its infinite volume counterpart $\cL^{\mathbf m}$ and $x\in \T_N$ by $x\in \Z$, so that we will, without further mention, identify the instantaneous currents in a periodic setting and in infinite volume.}
As mentioned in the introduction, this model is \emph{non-gradient}, i.e.~the current  cannot be directly written  as the gradient of a local function. 
Moreover, there is not an exact \textit{fluctuation-dissipation equation}, as in \cite{Sim}.%

\subsection{Main results}
\label{sec:fluctuation}

Let us state our main results on the fluctuations of the empirical energy around equilibrium, and the Green-Kubo formula for the diffusion coefficient. 

Recall \ccl{that we introduced after \eqref{eq:mesinva} the thermodynamical energy $\mathbf{e}_\beta=\beta^{-1}$ associated to $\beta>0$. }  
We define the energy empirical distribution $\pi_{t,\m}^N$ on the continuous torus $ \T=\R/\Z$ as
\begin{equation*} 
\pi_{t,\m}^N(\text du)=\frac 1 N \sum_{x \in \T_N} \omega^2_x(t) \delta_{x / N} (\text du), \quad t \in [0,T], \quad u \in \T, 
\end{equation*}
where $\delta_u$ stands for the Dirac measure at point $u$, and where   $\{\omega(t)\}_{t\geqslant 0}$ is the Markov process generated 
by $N^2\cL_N^{\m}$. If the initial state of the dynamics is  the equilibrium Gibbs measure $\mu_\beta^N$, then, 
for any fixed $t\geqslant 0$, and any disorder $\m \in \ccl{\Omega_N^{\cD}}$, the measure $\pi_{t,\m}^N$ weakly converges  
towards the measure $\{\mathbf e_\beta \text du\}$ on $\T$, which is deterministic and with constant density w.r.t. the Lebesgue measure on $\T$. 
Here we investigate the fluctuations of the empirical measure $\pi_{t,\m}^N$ with respect to this limit.

\bde{[Energy fluctuation field]
We denote by $\mathcal Y_{t,\m}^N$ the empirical energy fluctuation field associated with the Markov process $\{\omega(t)\}_{t\geq 0}$ 
generated by $N^2\cL_N^{\m}$ \ccl{(note the diffusive time acceleration)} and starting from $\ccl{\P_\beta^N=\P^N \otimes \mu_\beta^N}$, defined by its action over  test functions $H\in C^2(\T)$,} 
\begin{equation*}
\Y_{t,\m}^N(H)=\frac 1{\sqrt N}\sum_{x\in \T_N} H\left(\frac x N \right) \left(\omega_x^2(t)-\mathbf e_\beta\right).
\end{equation*}
We will prove that the annealed distribution of $\Y_{t,\m}^N$ converges in distribution towards the solution to the linear SPDE: \begin{equation} 
\partial_t\Y=D\,\partial^2_y\Y  + \sqrt{2D\chi(\beta)}\, \partial_y W, \label{eq:spde}
\end{equation}  where $W$ is a standard normalized space-time white noise, and $D$ is the diffusion coefficient defined in Definition \ref{def:diffusion} below.  More precisely, the solution to \eqref{eq:spde} is the stationary generalized Ornstein-Uhlenbeck process \ccl{(cf. \cite{HS})} with zero mean and covariances given by \begin{equation*}
\E_{\Y}\left[\Y_t(H)\Y_0(G)\right]=\frac{\chi(\beta)}{\sqrt{4\pi t D}}\int_{\R^2}  \overline{H}(u)\overline G(v) \exp\left(-\frac{(u-v)^2}{4tD}\right)\, \text{d}u \,\text{d}v,
\end{equation*} for all $t \geqslant 0$ and test functions $H,G \in C^2(\T)$. Here and after, $\overline H$ (resp. $\overline G$) is the periodic extension to the real line of $H$ (resp. $G$).
Let us fix a time horizon $T>0$. The probability measure on the Skorokhod space $\cD([0,T],\ccl{\Omega_N})$ induced by the Markov process $\{\omega(t)\}_{t\geq 0}$ 
generated by $N^2\cL_N^{\m}$ and starting from \ccl{$\P_\beta^N=\P^N \otimes \mu_\beta^N$ is  denoted by $\ccl{\mathbb Q_{\mu_\beta^N}}$. In order to avoid burdening notations, we also denote by $\ccl{\mathbb Q_{\mu_\beta^N}}[f]$ the expectation of the function $f$ w.r.t.~$\ccl{\mathbb Q_{\mu_\beta^N}}$}.

\medskip

Consider for $\ccl{k>0}$ the Sobolev space  $\mathfrak{H}_{-k}$ \ccl{(e.g. see  \cite{Brezis} for an exhaustive reference)}  of distributions $\Y$ on $\T$ with finite norm 
\[
\Vert\Y \Vert^2_{-k}=\sum_{n\geqslant 1} (\pi n)^{-2k} \big| \Y({e}_n) \big|^2,
\] 
where ${e}_n$ is the function $x \mapsto \sqrt 2 \sin(\pi n x)$. 
We denote by $\mathfrak Y^N$ the annealed probability measure on the space $\cD([0,T],\mathfrak{H}_{-k})$ 
of \ccl{c\`adl\`ag} trajectories on the Sobolev space,  induced by the Markov process $\{\omega(t)\}_{t\geq 0}$ and the mapping $\Y^N:(\m,\omega)\mapsto \{\Y_{t,\m}^N\}_{0\leq t\leq T}$. In other words, we define 
\[\mathfrak Y^N(\Y\in \cdot)=\ccl{\mathbb{Q}_{\mu_\beta^N}} \circ (\Y^N)^{-1}(\,\cdot\,).\]
\ccl{Note that for any fixed $N$, $\P_\beta^N$--a.s., the total initial energy is finite. Therefore,    $\mathbb{Q}_{\mu_\beta^N}$--a.s. the quantities $\omega^2_x(t)$ are bounded uniformly in $x\in \T_N$ and $t\geq 0$. In particular, $\mathbb{Q}_{\mu_\beta^N}$--a.s., the trajectory $\{\Y_{t,\m}^N\}_{t\geq 0}$ is in $ \cD([0,T],\mathfrak{H}_{-k})$.}

We let $\mathfrak Y$ be the probability measure on the space $\cC([0,T],\mathfrak{H}_{-k})$ 
corresponding to the generalized Ornstein-Uhlenbeck process $\Y_t$ solution to \eqref{eq:spde}. The \ccl{first} main result of \ccl{this paper} is the following.

\btheo{[\ccl{Equilibrium fluctuations}]\label{theo:fluctuations} Fix $k > 5/2$ and $T>0$. The sequence  $\{\mathfrak Y^N\}_{N\geqslant 1}$ weakly converges in $\cD([0,T],\mathfrak{H}_{-k})$  to the probability measure $\mathfrak Y$.} 

\ccl{
The second main result is the so-called \emph{Green-Kubo formula}, which gives an alternative formula for the diffusion coefficient $D$.
\btheo{[\ccl{Green-Kubo formula}]\label{theo:GKF}The diffusion coefficient $D$ introduced in Definition \ref{def:diffusion} below satisfies 
\[D=\lim_{z\downarrow 0}\left\{\lambda+\frac{1}{2} \int_0^{+\infty} \text dt \ e^{-z t} \sum_{x \in \Z} \ccl{\mathbb Q_{\mu_1^N}}\Big[j_{0,1}^A(\m,\omega(t)), \tau_xj_{0,1}^A(\m,\omega(0))\Big]\right\}.\]} 
}

\ccl{To prove the first result, w}e follow the lines of  \cite[Section 3]{sasolla}. The proof of Theorem \ref{theo:fluctuations} is divided into three steps. First, we need to show that the sequence $\{\mathfrak Y^N\}_{N\geqslant 1}$ is tight. 
This point  follows a standard argument, given for instance in \cite[Section 11]{MR1707314}, and recalled in Appendix \ref{sec:tightness} for the sake of completeness. 
Then, we prove \ccl{(in Section \ref{ssec:martingale})} that any limit point $\mathfrak Y^\star$ of $\{\mathfrak Y^N\}_{N\geqslant 1}$ is concentrated on trajectories whose marginals at time $t$ have, for any $t\in [0, T]$, the distribution of a centered Gaussian field with covariances given by 
\begin{equation}
\label{eq:covariance}
\mathfrak Y^\star\big[\Y_t(H)\Y_t(G)]=\chi(\beta)\int_\T  H(u) G(u)\, \text{d}u ,
\end{equation} where $H,G\in C^2(\T)$ are  test functions.
Since $\mu_\beta^N$ is stationary for the process $\ccl{\{\omega(t)\}_{t\geq 0}}$, this statement comes from the Central Limit Theorem for independent variables. 
Finally, we prove the main ingredient: all limit points $\mathfrak Y^\star$ of the sequence 
$\{\mathfrak Y^N\}_{N\geqslant 1}$ solve the  martingale problems \eqref{eq:mart1} and \eqref{eq:mart2} given below, namely, 
for any function $H\in C^2(\T)$, \begin{equation}
\label{eq:mart1} \mathfrak M_t(H):=\Y_t(H)-\Y_0(H)-\int_0^tD\Y_s(H'')\text ds, \end{equation}
and \begin{equation}
\label{eq:mart2} \mathfrak N_t(H):=\left(\mathfrak M_t(H)\right)^2-2t\chi(\beta)D\int_\T H'(u)^2\text du\end{equation}
are $\mathbf L^1(\mathfrak Y^\star)$-martingales. \ccl{This proves as wanted that $\mathfrak Y^\star=\mathfrak Y$ (see for instance  \cite[Chapter 11, Theorem 0.2, p. 289]{MR1707314}).} \ccl{All the ingredients to prove Theorem \ref{theo:fluctuations} are put together in Section \ref{ssec:martingale}.}

\bigskip 
\ccl{
To prove Theorem \ref{theo:GKF}, we will show that the limit $z\downarrow 0 $ in the right hand side exists (Proposition \ref{prop:conv_GK}), and  then prove that the limit coincides with \eqref{eq:GCDiff} (Proposition \ref{prop:equivalence}).
}

\subsection{Notations and definitions}

\subsubsection{Cylinder functions\label{ssec:cylinder}}

For every $x \in \Z$ and every   measurable function $f$ on $\ccl{\Omega^{\cD}} \times \Omega$, 
we define the translated function $\tau_xf$ on $\ccl{\Omega^{\cD}}\times\Omega$  by: $\tau_xf(\mathbf m,\omega):=f(\tau_x\mathbf m,\tau_x\omega)$, 
where $\tau_x\mathbf m$ and $\tau_x\omega$ are the disorder and particle configurations translated by $x\in\Z$, respectively: 
\begin{equation*}
{(\tau_x\mathbf m)_z:=m_{x+z}, \qquad (\tau_x\omega)_z=\omega_{x+z}.} 
\end{equation*} 
 Let $\Lambda$ be a finite subset of $\Z$, and denote by $\mathcal{F}_\Lambda$ 
 the $\sigma$-algebra generated by $\{m_x, \omega_x \ ; \ x \in \Lambda\}$. 
 For a fixed positive integer $\ell$, we define $\Lambda_\ell:=\{-\ell,...,\ell\}$. 
 If the box is centered at site $x\neq 0$, we denote it by $\Lambda_\ell(x):=\{-\ell+x,...,\ell+x\}$. 
If $f$ is a measurable function on $\ccl{\Omega^{\cD}}\times\Omega$, the {support} of $f$, denoted by $\Lambda_f$, 
is the smallest subset of $\Z$ such that $f(\mathbf m,\omega)$ only depends on $\{m_x, \omega_x \ ; \ x \in \Lambda_f\}$ and $f$ 
is called a {cylinder (or local) function} if $\Lambda_f$ is finite. In that case, we denote by $s_f$ the smallest positive integer $s$ 
such that \ccl{ $\Lambda_f\subset\Lambda_{s}$}. 
For every cylinder function $f:\ccl{\Omega^{\cD}}\times\Omega\to\R$, consider the formal sum 
\begin{equation*}
\Gamma_f:=\sum_{x \in \Z} \tau_x f
\end{equation*}
 which is ill defined, but for which both gradients
 \begin{align*}\nabla_0(\Gamma_f)&:=\Gamma_f(\mathbf m,\omega^0)-\Gamma_f(\mathbf m,\omega),\\
\nabla_{0,1}(\Gamma_f)&:=\Gamma_f(\mathbf m,\omega^{0,1})-\Gamma_f(\mathbf m,\omega),
\end{align*} 
only involve a finite number of non-zero contributions and are therefore well defined.  Similarly, we define for any $x\in \mathbb{T}_N$
\begin{align*} 
(\nabla_x f)(\mathbf m,\omega)&:=f(\mathbf m,\omega^x)-f(\mathbf m,\omega), \\
(\nabla_{x,x+1}f)(\mathbf m,\omega)&:=f(\mathbf m,\omega^{x,x+1})-f(\mathbf m,\omega). 
\end{align*}

\begin{de}[\ccl{Space $\cC$}]
\label{de:C}
We denote by $\cC$ the set of  cylinder functions $\varphi$ on $\ccl{\Omega^{\cD}}\times\Omega$, such that
\ccl{\begin{align}
\label{eq:P1}
\tag{P1}
&\mbox{$\forall\ \omega \in \Omega$, \; $\m \mapsto \varphi(\m,\omega)$ is continuous on $\ccl{\Omega^{\cD}}$,}\\
\label{eq:P2}
\tag{P2}
&\mbox{$\forall\ \m\in\ccl{\Omega^{\cD}}$,  $\omega \mapsto \varphi(\m,\omega)$ is in $\mathbf L^2(\mu_{\beta})$,} \\
\label{eq:P3}
\tag{P3}
&\mbox{$\forall\ \m\in\ccl{\Omega^{\cD}}$, $\omega \mapsto \varphi(\m,\omega)$ and has mean zero with respect to $\mu_\beta$.} \end{align}
Note that any cylinder function can be identified with a function on $\Omega^{\cD}_N\times\Omega_N$ for $N$ large enough, and vice-versa.}
\end{de} 
\ccl{\begin{rem} It is straightforward to check that any cylinder function $\varphi \in \cC$ also belongs to $\mathbf L^2(\P_\beta^\star).$ To prove this claim, assume that it is not the case. Since $\varphi$ is a cylinder function, the disorder $\mathbf m$ is integrated over a compact set $[C^{-1},C]^{\Lambda_\varphi}$, and therefore there must exist a convergent sequence $\mathbf m_k\to\mathbf m_*$ such that $\int |\varphi(\mathbf m_k, \omega)|^2 \text d\mu_\beta(\omega)\to\infty$.
Then, denoting $\Omega^A=[-A,A]^{\Lambda_\varphi}$, one can write,  by the monotone convergence result and Moore-Osgood's Theorem:
\begin{align*}\int |\varphi(\mathbf m_*, \omega)|^2\; \text d\mu_\beta(\omega)&=\lim_{A\to\infty} \int_{\Omega^A} |\varphi(\mathbf m_*, \omega)|^2 \; \text d \mu_\beta(\omega)=\lim_{A\to\infty} \lim_{k\to\infty}\int_{\Omega^A} |\varphi(\mathbf m_k, \omega)|^2\; \text d\mu_\beta(\omega)\\
=&\lim_{k\to\infty}\lim_{A\to\infty} \int_{\Omega^A} |\varphi(\mathbf m_k, \omega)|^2 \; \text d\mu_\beta(\omega)=\lim_{k\to\infty}\int |\varphi(\mathbf m_k, \omega)|^2\; \text d\mu_\beta(\omega)=\infty,\end{align*}
which contradicts \eqref{eq:P2}. \end{rem}
}

\begin{de}[\ccl{Space $\cQ$}]
\label{de:quadratic} 
We introduce \ccl{a specific set of quadratic} cylinder functions on $\ccl{\Omega^{\cD}}\times\Omega$, denoted by $\Q \subset \cC$, and defined as follows: $f \in \Q$ if there exists  a sequence $\big\{\psi_{i,j}(\mathbf m)\big\}_{i,j\in\Z}$ 
of  cylinder  functions on $\ccl{\Omega^{\cD}}$, with finite support in $\ccl{\Omega^{\cD}}$, such that 
 \begin{enumerate}[(i)]
 \item for all $i,j \in \Z$ and all $\omega \in \Omega$,  the random variable $\m \mapsto \psi_{i,j}(\m)$ is continuous on $\ccl{\Omega^{\cD}}\ $;
 \item  $\psi_{i,j}$ vanishes for all but a finite number of pairs  $(i,j)$, and is symmetric: $\psi_{i,j}=\psi_{j,i}\ $; 
 \item  $f$ is written as \begin{equation}
f(\mathbf m,\omega)=\sum_{i \in \Z} \psi_{i,i}(\m) (\omega_{i+1}^2-\omega_i^2)+\sum_{\substack{i,j\in\Z\\ i\neq j}} \psi_{i,j}(\mathbf m)\omega_i\omega_j. \label{eq:quadratic}
\end{equation} 
\end{enumerate}
\end{de}
\ccl{One easily checks that $\cQ$ is invariant under the action of the generator $\cL_N^\m$ and that any function in $\cQ$ is an homogeneous polynomials of degree 
two in the variable $\omega$, with mean zero with respect to $\mu_\beta$ for every $\m\in\ccl{\Omega^{\cD}}$}. Another definition through \emph{Hermite polynomials} 
is given in Appendix \ref{sec:hermite} (see Section \ref{sub:quad-herm}). 
We are now ready to define two sets of functions that will play a crucial role later on.

\bde{[\ccl{Spaces $\cC_0$, $\cQ_0$}]

\label{def:space0}
Let $\cC_0$  be the set of cylinder functions $\varphi$ on $\ccl{\Omega^{\cD}}\times \Omega$  such that there exists  a finite subset $\Lambda$ of $\Z$,  and  cylinder, measurable functions $\{F_x, G_x\}_{x\in\Lambda}$ defined on $\ccl{\Omega^{\cD}}\times \Omega$, that verify  
\begin{equation*}\varphi=\sum_{x \in \Lambda} \Big\{\nabla_x(F_x)+\nabla_{x,x+1}(G_x)\Big\}, 
\end{equation*}
and such that, for all $x \in \Lambda$\ccl{, the functions $F_x$ and $G_x$ satisfy \eqref{eq:P1} and \eqref{eq:P2} in Definition \ref{de:C}.}

Let $\Q_0 \subset \cC_0$ be the set of such functions $\varphi$, with the additional assumption that  the cylinder  functions $F_x$, $G_x$ are homogeneous polynomials of degree two in the variable $\omega$. 
\ccl{One easily checks that, by construction, $\cC_0 \subset \cC$ and $\Q_0 \subset \cQ$.}
}
%

\subsubsection{Dirichlet form and properties of $\cC_0$ and $\cQ_0$}

Before giving the main properties of the sets introduced above, we introduce the usual quadratic form associated to the generator.
Given $\Lambda_\ell=\{-\ell, \dots, \ell\}$, we define   $\cL^\m_{\Lambda_\ell}$, resp.  $\S_{\Lambda_\ell}$, as the restriction of the generator $\cL^\m$, resp. $\S$, to $\Lambda_\ell$. 
For the jump dynamics $\S^{\text{exch}}$, we consider in $\S_{\Lambda_\ell}$ only the jumps with both ends in $\Lambda_\ell$, namely 
\[(\S_{\Lambda_\ell}) f(\m,\omega)=\gamma\sum_{x \in \Lambda_\ell} (\nabla_x f) (\m,\omega)+\lambda \sum_{x \in\Lambda_\ell\setminus\{\ell\}} (\nabla_{x,x+1} f) (\m,\omega).\]
Finally, for any $x \in \Z$, any cylinder functions $f,g \in \cC$, and any  positive integer $\ell$, let us define
\begin{equation}
\label{eq:DefDir}
\mathcal{D}_\ell(\mu_\beta ; f):=\big\langle (-\cL^\m_{\Lambda_\ell}) f, f \big\rangle_\beta=\big\langle (-\S_{\Lambda_\ell}) f, f \big\rangle_\beta.  
\end{equation}
Since 
\begin{equation*} 
\big\langle \nabla_x f,g \big\rangle_{\beta}  = - \frac{1}{2}\big\langle \nabla_x f,\nabla_x g \big\rangle_{\beta}\quad \mbox{ and } \quad   \big\langle \nabla_{x,x+1} f,g \big\rangle_{\beta}  = -\frac{1}{2}\big\langle \nabla_{x,x+1} f,  \nabla_{x,x+1}g \big\rangle_{\beta},
\end{equation*}
equation \eqref{eq:DefDir} in particular rewrites
\begin{equation}
\label{eq:dirich}
\mathcal{D}_\ell(\mu_\beta ; f) = \frac{\gamma}{2} \sum_{x\in\Lambda_\ell}  \big\langle \left(\nabla_x f\right)^2 \big\rangle_\beta + \frac{\lambda}{2} \sum_{x\in\Lambda_\ell \setminus \{\ell\}}  \bps{\left(\nabla_{x,x+1} f\right)^2}_\beta.
\end{equation}
The symmetric form $\cD_\ell$ is called the \emph{Dirichlet form}, and is well defined on $\cC$. It is a random variable with respect to the disorder $\mathbf m$. Note that, since the symmetric part of the generator does not depend on $\m$, we can write \ccl{(cf. \eqref{eq:DefPbstar})} 
\[
 \E \big[\cD_\ell(\mu_\beta;f)\big]=\cD_\ell(\P_\beta^\star;f).
\]
\bprop{\label{prop:propertiesC0} 
The following elements belong to $\Q_0$: \begin{align*}
(a) \quad & \mbox{\ccl{The instantaneous currents }} j_{0,1}^S,  \ j_{0,1}^A \ccl{\mbox{ defined in \eqref{eq:DefjA} and \eqref{eq:DefjS}}}.\\
(b) \quad & \cL^\m f,   \  \S f \text{ and } \A^\m f, \ \text{ for all } f \in \Q.
\end{align*}
}

\begin{proof}
The first statement $(a)$ is directly obtained from the following identities: for  $x \in \Z$, and $k \geqslant 1$, \begin{align}
\omega_{x+1}^2 - \omega_x^2 &=  \nabla_{x,x+1} \left(\omega_x^2\right) \label{eq:equalities_C0_1}\\
\omega_x\omega_{x+k} & = -\frac{1}{2}\nabla_x\big(\omega_x\omega_{x+1} \big)+\sum_{\ell=1}^{k-1}\nabla_{x+\ell, x+\ell+1}\big(\omega_x\omega_{x+\ell}\big).
\label{eq:equalities_C0_2}
\end{align}
Then, if $f \in \Q$  is of the form \eqref{eq:quadratic}, it is easy to see that \eqref{eq:equalities_C0_1} and \eqref{eq:equalities_C0_2} are sufficient to prove $(b)$. For instance, \begin{multline*}
\cL^\m(\omega_x\omega_{x+1})=\frac{\omega_x\omega_{x+2}}{\sqrt{m_{x+1}m_{x+2}}}-\frac{\omega_{x+1}\omega_{x-1}}{\sqrt{m_xm_{x-1}}}+\frac{\omega_{x+1}^2-\omega_x^2}{\sqrt{m_xm_{x+1}}} \\ -4\gamma\omega_x\omega_{x+1}+\lambda(\omega_{x+2}-\omega_{x+1})\omega_x+\lambda(\omega_{x-1}-\omega_x)\omega_{x+1}.
\end{multline*}
The integrability and regularity conditions are straightforward.
\end{proof}

\bprop{[Dirichlet bound]
\label{lem:ibp}
Let $\varphi$ be a cylinder function in $\cC_0$, written by definition as 
\[
\varphi=\sum_{x \in \Lambda} \Big\{ \nabla_x(F_x) + \nabla_{x,x+1}(G_x)\Big\},
\]
for some  $\Lambda \subset \Z$ and some functions $F_x$ and $G_x$ satisfying \ccl{ conditions \eqref{eq:P1} and \eqref{eq:P2} in Definition \ref{de:C}}. 
Let us consider $h\in \cC$ with support in $\Lambda_\ell$. Denote by $\ell_\varphi$ the integer $\ell_\varphi:=\ell-s_\varphi-1$ so that the supports of \ccl{$ \tau_y\varphi$ and its gradients $\nabla_{x,x+1}(\tau_y \varphi)$
are included in $\Lambda_\ell$ for every $y \in \Lambda_{\ell_\varphi}$. }

Then, there exists a positive constant $C(\varphi,\gamma)$ which depends only on $\varphi$ and $\gamma$ such that 
\begin{equation}\label{eq:dirich-bd}
\bigg|\E_\beta^\star\bigg[\sum_{|x| \leqslant \ell_\varphi} \tau_x\varphi,h\bigg]\bigg| \leqslant C(\varphi,\gamma) \; \big(\cD_\ell(\P^\star_\beta;h)\big)^{1/2}.
\end{equation}
}

\bprf{
Let us assume first that $\varphi=\sum_{y\in \Lambda} \nabla_y(F_y)$. Then we have 
\begin{align}
\bigg|\E_\beta^\star\bigg[\sum_{|x| \leqslant \ell_\varphi} \tau_x\varphi,h\bigg]\bigg| & = \bigg|\sum_{|x|\leqslant \ell_\varphi}\sum_{y \in \Lambda}\E_\beta^\star\big[ \tau_x F_y,\nabla_{y+x} h\big]\bigg| \notag\\
& \leqslant \sum_{|x|\leqslant \ell_\varphi}\sum_{y\in\Lambda}\E_\beta^\star\big[ (\tau_xF_y)^2\big]^{1/2} \; \E_\beta^\star\big[ \big(\nabla_{x+y} h\big)^2\big]^{1/2}\notag\\
&  \leqslant \sqrt{2\gamma^{-1}}\; \ccl{|\Lambda|} \; |2\ell_\varphi +1 |^{1/2} \Big( \sup_{y \in \Lambda} \E_\beta^\star\big[ F_y^2\big]\Big)^{1/2}  \big(\cD_\ell(\P_\beta^\star;h)\big)^{1/2}, \label{eq:ippbound}
\end{align}
\ccl{where $|\Lambda|$ denotes the cardinal of the set $\Lambda$.} In the previous inequalities we used the Cauchy-Schwarz inequality twice, and the fact coming from \eqref{eq:dirich} that 
\[
\sum_{|x|\leqslant \ell_\varphi} \E_\beta^\star\big[\big(\nabla_x h\big)^2\big]\leqslant \frac{2}{\gamma} \; \cD_\ell(\P_\beta^\star;h).
\]
 Therefore, the following constant 
\[
C(\varphi,\gamma):= \sqrt{2\gamma^{-1}} \; \ccl{|\Lambda|} \; |2\ell_\varphi +1 |^{1/2} \bigg\{\Big( \sup_{x \in \Lambda} \E_\beta^\star\big[ F_x^2\big]\Big)^{1/2} + \Big(\sup_{x \in \Lambda} \E_\beta^\star\big[ G_x^2\big] \Big)^{1/2} \bigg\}
\]
satisfies \eqref{eq:dirich-bd}.
The general case is similar and left to the reader.}

The main focus of this paper will be on the following quantities: for any $\varphi \in \cC$ let us define 
\begin{equation}
\label{def:varfor}
\E_\beta^\star\big[ \varphi, (-\S)^{-1} \varphi\big]  \ccl{: =} \sup_{h \in \cC} \Big\{ 2 \E_\beta^\star\big[ \varphi, h \big] - \cD_{s_h}(\P_\beta^\star ; h)\Big\}\in [0,+\infty].
\end{equation}
By polarization, this definition can be extended to give a meaning to $ \E_\beta^\star\big[ \varphi, (-\S)^{-1} \psi\big], $ 
for any $\varphi, \psi \in \cC$.
As a consequence of the previous results, these quantities are well defined for functions in $\cC_0$:

\bcor{\label{cor:wd}For every function $\varphi \in \cC_0$, the quantity $\E_\beta^\star[\varphi, (-\S)^{-1} \varphi]$ is finite. }

\bprf{This is a consequence of \eqref{eq:dirich-bd} and of the variational formula \eqref{def:varfor}: 
\begin{align*}
\E_\beta^\star\big[ \varphi, (-\S)^{-1} \varphi\big] & = \sup_{h \in \cC} \Big\{ 2 \E_\beta^\star\big[ \varphi, h \big] - \cD_{s_h}(\P_\beta^\star ; h)\Big\} \\
& \leq \sup_{h \in \cC}\Big\{ C(\varphi,\gamma) \big(\cD_{s_h}(\P_\beta^\star ; h)\big)^{1/2} - \cD_{s_h}(\P_\beta^\star ; h)\Big\} = \frac{C^2(\varphi,\gamma)}{4} < \infty.
\end{align*}}

Finally, if we use the decomposition of every function in $\mathbf L^2(\mu_\beta)$ over the basis of Hermite polynomials, 
we can prove the following result for functions in $\cQ_0$ 
(the details for the proof are given in Appendix \ref{sec:hermite}, Proposition \ref{prop:variational_restriction}.):

\bprop{[Variance of quadratic functions] \label{prop:variance} If $\varphi \in \cQ_0$, then  \[\E_\beta^\star\big[\varphi, (- \ccl{\S})^{-1} \varphi\big] = \sup_{\substack{g \in \cQ\\ s_g=s_\varphi}} \left\{ 2\, \E_\beta^\star\big[\varphi,g\big]-\cD_{s_\varphi}(\P_\beta^\star;g)\right\}.   \]}

\subsubsection{Semi inner products and diffusion coefficient \label{ssec:products}} 

For cylinder functions $g,h \in \cC$, let us define:

\begin{equation}\label{eq:prod1}\ll g,h\gg_{\beta,\star}:=\sum_{x \in \Z} \E^\star_\beta[g\, \tau_x h], 
\qquad \text{and} \qquad \ll g \gg_{\beta,\star\star}  : =\sum_{x \in \Z} x\, \E_\beta^\star\big[ g\,\omega^2_x\big].
\end{equation}
Both quantities are well defined because $g$ and $h$ belong to $\cC$ and therefore all but a finite number of terms on each sum vanish.

\begin{rem}\label{rem:prod} Note that $\ll \cdot, \cdot \gg_{\beta,\star}$ is a semi inner product, since the following equality holds: \begin{equation*}\ll g,h \gg_{\beta,\star}=\lim_{\Lambda \uparrow \Z} \frac{1}{\vert \Lambda \vert} \E_\beta^{\star}\bigg[\sum_{x \in \Lambda} \tau_x  g,  \sum_{x \in \Lambda} \tau_x h \bigg].\end{equation*}
Since $\ll g-\tau_x g ,h\gg_{\beta,\star}=0$ for all $x \in \Z$, this inner product is not definite. In particular we have $\ll j_{0,1}^S, h \gg_{\beta,\star}=0$ for any $h \in \cC$.
\end{rem}

\begin{rem}\label{rem:beta}\ccl{Observe that if $\omega$ is distributed according to $\mu_\beta$ then $\beta^{1/2}\omega$ is distributed according to $\mu_1$. Therefore, if $g,h \in \cQ$, then
\begin{equation}\label{eq:invbeta}
\ll g,h \gg_{\beta,\star} = \beta^2 \ll g,h \gg_{1,\star},\qquad  \text{and} \qquad \ll g \gg_{\beta,\star\star} = \beta^2 \ll g \gg_{1,\star}.
\end{equation}

}

\end{rem}

 In the next proposition we give explicit formulas for elements of $\cC_0$.

\bprop{ \label{prop:directcomputations}If $\varphi \in \cC_0$ with \begin{equation*} \varphi=\sum_{x \in \Lambda} \Big\{\nabla_x(F_x)+\nabla_{x,x+1}(G_x)\Big\},\end{equation*} then \begin{align*}
\ll \varphi \gg_{\beta,\star\star}&= \E_\beta^\star\bigg[( \omega_0^2-\omega_1^2) \sum_{x \in \Lambda} \tau_{-x} G_x \bigg],\\
\ll \varphi,g \gg_{\beta,\star}&=  \E_\beta^\star\bigg[ \nabla_0(\Gamma_g) \sum_{x\in\Lambda} \tau_{-x} F_x + \nabla_{0,1}(\Gamma_g) \sum_{x\in\Lambda} \tau_{-x} G_x\bigg] \qquad \text{ for all } g \in \cC.
\end{align*}}
\bprf{
\ccl{
The proof comes from straightforward computations using \eqref{eq:prod1}, we simply sketch it. Regarding the first identity, first note that by a change of variable $\omega\mapsto \omega^x$, for any cylinder function $F$, the quantity $\E_\beta^\star(\omega_y^2\nabla_x F )$ vanishes for any $y.$ A change of variable $\omega\mapsto \omega^{x,x+1}$ also yields \[\E_\beta^\star\Big[\omega_y^2\nabla_{x,x+1} F \Big]=\begin{cases}
-\E_\beta^\star\big[ (\omega_x^2-\omega_{x+1}^2) F\big] & \mbox{ if } y=x \vphantom{\Big(}\\
\E_\beta^\star\big[ (\omega_x^2-\omega_{x+1}^2) F\big]& \mbox{ if } y=x+1 \vphantom{\Big(}\\
0 & \mbox{ else},
\end{cases}\]
which, together with the translation invariance of $\P_\beta^\star$, proves the first identity.
The second identity is an immediate consequence of the symmetry of the gradients in $\mathbf{L}^2(\mu_\beta)$, \textit{i.e.} 
\[\E_\beta^\star\Big[ \Gamma_g \nabla_x F \Big]=\E_\beta^\star\Big[F\; \nabla_x (\Gamma_g)  \Big],
\qquad\E_\beta^\star\Big[ \Gamma_g \nabla_{x,x+1} G \Big]=\E_\beta^\star\Big[G \; \nabla_{x,x+1}( \Gamma_g ) \Big],\]
and of the translation invariance of $\P_\beta^\star$.
}
}

We are now able to give the definition of the diffusion coefficient, which is going to be rigorously derived from the non-gradient approach detailed in the next sections.

\bde{\label{def:diffusion}We define the {diffusion coefficient} $D(\beta)\equiv D$ for any $\beta >0$ as
\ccl{
\begin{equation}
\label{eq:GCDiff}
D:=\lambda+\frac{1}{\chi(\beta)}\inf_{f \in\Q}\sup_{g \in \Q}\left\{\ll f,-\S f \gg_{\beta,\star}+2\ll j_{0,1}^A-\A^{\mathbf m} f,g\gg_{\beta,\star}-\ll g , -\S g \gg_{\beta,\star}\right\},
\end{equation}
where $\chi(\beta)$ is the static compressibility defined in \eqref{eq:Defchi}. Note that according to   Remark \ref{rem:beta} the coefficient $D$ does not depend on $\beta$ (since by definition $\chi(\beta)=2\beta^{-2}$), which is why we omit $\beta $ in our notation.
}
}
The first term in the sum (\ccl{i.e. $\lambda$}) is only due to the exchange noise, whereas the second one comes from the hamiltonian part of the dynamics. Formally, this formula could be read as \begin{equation}D  \ccl{\text{ ``='' }}\lambda + \frac{1}{\chi(\beta)}\ll j_{0,1}^A, (-\cL^{\mathbf m})^{-1}j_{0,1}^A\gg_{\beta,\star}, \label{eq:gk1}\end{equation} but the last term is not well defined because $j_{0,1}^A$ is not in the range of $\cL^\m$.  More rigorously, we should define
\begin{equation}
\overline{D}(\beta):=\lambda + \frac{1}{\chi( \beta)}\limsup_{z \ccl{\downarrow} 0} \ll j_{0,1}^A, (z-\cL^\m)^{-1} j_{0,1}^A \gg_{\beta,\star}.
\label{eq:greenK} 
\end{equation} 
The last expression, called \emph{Green-Kubo formula}, is now well defined. \ccl{Moreover, once again Remark \ref{rem:beta} implies that \[ \bar{D}(\beta)=:\bar{D} \] does not depend on $\beta$:  in order to see it, let $\mathbf{L}^2_{\beta,\star}$ be  the Hilbert space generated by the closure of  the set  $\{g \in \cC \; ; \; \ll g,g \gg_{\beta,\star} < \infty\}$ 
 w.r.t.~the inner product $\ll \cdot \gg_{\beta,\star}$. Consider  $h_z:=h_z(\mathbf m,\omega;\beta)$  the solution to the resolvent equation in $\mathbf{L}^2_{\beta,\star}$ 
 which reads
 \begin{equation*} 
 (z-\cL^{\mathbf m})h_z=j_{0,1}^A. 
 \end{equation*}
Since $j_{0,1}^A$ is a homogeneous polynomial of degree two in $\omega$,  $h_z$ is also a homogeneous polynomial of degree two 
(by the fact that $\cL^\m$ preserves every class of homogeneous polynomials). It follows that 
\begin{equation*}
\ll j_{0,1}^A, (z-\cL^\m)^{-1} j_{0,1}^A \gg_{\beta,\star}\; =\ll h_z,j_{0,1}^A \gg_{\beta,\star}=\frac{1}{\beta^2}\ll h_z,j_{0,1}^A \gg_{1,\star}.
\end{equation*}  
One famous mathematical problem is to prove that, in the Green-Kubo formula \eqref{eq:greenK}, $\limsup$ can be replaced by $\lim$, and therefore to prove the convergence of the product $\ll \cdot \gg_{\beta,\star}$ in \eqref{eq:greenK}}  as $z\ccl{\downarrow} 0$. 
In \ccl{Propositions \ref{prop:conv_GK} and \ref{prop:equivalence}}, we prove that this indeed holds (the proof being inspired by  \cite{MR2448630}), 
and we also show that the diffusion coefficient can be equivalently defined in the two ways\ccl{, i.e. $ D=\bar D$}.

\section{Macroscopic fluctuations of energy}

\subsection{\ccl{Proof of  Theorem \ref{theo:fluctuations}}}
\label{ssec:martingale}
In what follows, in order to simplify notation we write $f(\m,s):=f(\m,\omega(s))$ for any $f$ which is defined on $\ccl{\Omega^{\cD}_N \times \Omega_N}$.
Let us fix  $H\in C^2(\T)$ and $t\in[0,T]$.  It\^o calculus, and a discrete integration by parts, permits to decompose $\Y_{t,\m}^N(H)$ as 
\begin{equation}
\label{eq:dec}
\Y_{t,\m}^N(H)=
\Y_{0,\m}^N(H)+\int_0^t \sqrt N \sum_{x \in \T_N} \nabla_NH\left(\frac x N \right) j_{x,x+1}(\m,s)\, \text ds+\mathcal M_{t,\m}^N(H)
\end{equation} 
where $\mathcal M_{t,\m}^N$ is the martingale defined as 
\begin{equation}\label{eq:defmart}\mathcal M_{t,\m}^N(H)=\int_0^t \frac{1}{N\sqrt N} \sum_{x \in \T_N} \nabla_NH\Big(\frac x N \Big) \big(\omega_{x}^2-\omega_{x+1}^2\big)(s) \, \text d\big[N_{x,x+1}(\lambda N^2 s)-\lambda N^2 s\big].\end{equation} 
Here and after,  $\{N_{x,x+1}(t)\}_{x\in\Z, t\geq 0}$ and  $  \{N_x(t)\}_{x\in\Z, t\geq 0}$ are independent Poisson processes of intensity 1, 
and $\nabla_N$ stands for the discrete gradient: 
\[\nabla_NH\Big(\frac{x}{N}\Big)=N\left[H\Big(\frac{x+1}{N}\Big)-H\Big(\frac{x}{N}\Big)\right].\] 
The discrete Laplacian $\Delta_N$ is defined in a similar way:  
\[\Delta_NH\Big(\frac{x}{N}\Big)=N^2\left[H\Big(\frac{x+1}{N}\Big)+H\Big(\frac{x-1}{N}\Big)-2H\Big(\frac{x}{N}\Big)\right].\]
To close the equation, we are going to replace the term involving the microscopic currents in \eqref{eq:dec} with a term involving $\Y_{t,\m}^N$. 
In other words, the dominant contribution in 
\[\int_0^t \sqrt N \sum_{x \in \T_N} \nabla_NH\left(\frac x N \right) j_{x,x+1}(\m,s)\, \text ds\] 
is its projection over the conservation field $\Y_{t,\m}^N$ (recall that the total energy is the unique conserved quantity of the system). 
The non-gradient approach consists in using the \emph{fluctuation-dissipation approximation} of the current $-j_{x,x+1}$  as 
$D\big(\omega_{x+1}^2-\omega_{x}^2\big)+\cL^\m(\tau_x f)$. This replacement is made rigorous in the \emph{Boltzmann-Gibbs principle} (Proposition \ref{prop:macro2}).

After adding and subtracting  $D\big(\omega_{x+1}^2-\omega_{x}^2\big)+\cL^\m(\tau_x f)$ in \eqref{eq:dec} above, we can rewrite it, for any $f \in  \Q$, as follows: 
\begin{equation}
\Y_{t,\m}^N(H)=\Y_{0,\m}^N(H)+\int_0^tD\Y^N_{s,\m}(\Delta_NH) \text ds +\mathfrak I^{1,N}_{t,\m,f}(H)+\mathfrak I^{2,N}_{t,\m,f}(H)+  \mathfrak M^{1,N}_{t,\m,f}(H)+\mathfrak M^{2,N}_{t,\m,f}(H), 
\label{eq:decomp} 
\end{equation} 
where
\begin{align}
\label{eq:DefI1}
\mathfrak I^{1,N}_{t,\m,f}(H)&=\int_0^t \sqrt N \sum_{x \in \T_N} \nabla_NH\left(\frac x N\right)\Big[j_{x,x+1}(\m,s)\nonumber\\
&\hspace{15em}+D\big(\omega_{x+1}^2-\omega_x^2\big)(s)+\cL^\m(\tau_x f)(\m,s)\Big]\text ds,\\
\label{eq:DefI2}
\mathfrak I^{2,N}_{t,\m,f}(H)&=-\int_0^t \sqrt N \sum_{x\in\T_N} \nabla_NH\left(\frac x N\right) \cL^\m(\tau_x f)(\m,s)\text ds,\\
\mathfrak M^{1,N}_{t,\m,f}(H)&=\int_0^t \frac{1}{N\sqrt N} \sum_{x\in\T_N} \nabla_NH\left(\frac x N\right)\Bigg\{ \nabla_{x}(\Gamma_f)(s)\, \text d\big[N_x(\gamma N^2 s)-\gamma N^2 s \big]\notag\\
&\label{eq:DefM1}
 \hspace{9em}-\big[\nabla_{x,x+1}(\omega_x^2-\Gamma_f) \big](s) \, \text d\big[N_{x,x+1}(\lambda N^2 s)-\lambda N^2 s\big]\Bigg\},\\
\mathfrak M^{2,N}_{t,\m,f}(H)&=-\int_0^t \frac{1}{N\sqrt N} \sum_{x\in\T_N} \nabla_NH\left(\frac x N\right) \Bigg\{\nabla_{x,x+1}(\Gamma_f)(s) \, \text d\big[N_{x,x+1}(\lambda N^2 s)-\lambda  N^2 s\big] \notag \\
&\label{eq:DefM2}
 \qquad \qquad \qquad +\nabla_{x}(\Gamma_f)(s)\, \text d\big[N_x(\gamma N^2 s)-\gamma N^2 s\big]\Bigg\}.
\end{align}
\ccl{The  following  two results are the main ingredients to prove Theorem \ref{theo:fluctuations}.}

\blem{\label{lem:macro1}For every function $H\in C^2(\T)$, and every function $f\in\Q$, \begin{equation}
\lim_{N\to\infty} \ccl{\ccl{\mathbb Q_{\mu_\beta^N}}}\bigg[\ccl{\sup_{0\leq t\leq T} }\ccl{\Big(}\mathfrak I^{2,N}_{t,\m,f}(H)+\mathfrak M^{2,N}_{t,\m,f}(H)\ccl{\Big)^2}\bigg] =0. \label{eq:macro1}
\end{equation}}

\bprop{[Boltzmann-Gibbs principle]\label{prop:macro2} There exists a sequence of functions $\{f_k\}_{k\in\N} \in \Q$ such that 
\begin{enumerate}[(i)] \item for every function $H\in C^2(\T)$, 
\begin{equation}
\lim_{k\to\infty} \lim_{N\to\infty} \ccl{\ccl{\mathbb Q_{\mu_\beta^N}}}\bigg[ \sup_{0\leqslant t \leqslant T} \Big(\mathfrak I^{1,N}_{t,\m,f_k}(H)\Big)^2\bigg]=0, \label{eq:macro2}
\end{equation} \item and moreover \begin{equation}
\lim_{k\to\infty} \E_{\beta}^\star\bigg[ \lambda\Big(\nabla_{0,1}(\omega_0^2-\Gamma_{f_k})\Big)^2 + \gamma \Big(\nabla_0(\Gamma_{f_k})\Big)^2\bigg]= 2D\chi(\beta). \label{eq:BG_principle}
\end{equation}
\end{enumerate}}

\begin{rem}
Note that the expectation at the left hand side of \eqref{eq:BG_principle} also rewrites as 
\[ 
2\lambda\chi(\beta) +  \E_\beta^\star \Big[ \lambda \big( \nabla_{0,1}(\Gamma_{f_k})\big)^2 +   \gamma \big(\nabla_0(\Gamma_{f_k}\big)^2 \Big] ,
\] since for any $f\in \mathcal{C}$, one can check that $\E_\beta^\star\left[(\omega_0^2-\omega_1^2)\nabla_{0,1}\Gamma_f\right]=0$.
\end{rem}

\ccl{\begin{rem}
In order to identify the limit point solving the martingale problems, we can use a weaker version of \eqref{eq:macro1} and \eqref{eq:macro2}, namely the same convergence without the supremum over $t$ would be enough.
However, this strong version will be necessary in order to prove tightness (Appendix \ref{sec:tightness}), which explains our choice to state the results in that way.
\end{rem}}
\ccl{We now put together the pieces to prove Theorem \ref{theo:fluctuations}. According to Proposition \ref{prop:tight}, the sequence $\{\mathfrak{Y}^N\}_N$ is tight. Consider one of its limit points $\mathfrak{Y}^*$.}
\ccl{As detailed in Appendix \ref{sec:tightness}, see Lemma \ref{lem:proofapp},} using analogous ingredients as in \ccl{the proof of} Lemma \ref{lem:macro1} and \ccl{Proposition \ref{prop:macro2}}, one can prove that, \ccl{up to extraction}, the martingale $\mathfrak M^{1,N}_{t,\m,f_k}$ converges in $\mathbf L^2(\P_\beta^\star)$, as $N\to\infty$ \ccl{and afterwards} $k\to\infty$, to
 a martingale $\mathfrak M_t(H)$ of quadratic variation \begin{equation*}2tD\chi(\beta)\int_\T H'(u)^2\, \text du.\end{equation*} 
In particular, using Lemma \ref{lem:macro1} and \eqref{eq:macro2}, one obtains that the limit $\Y_t(H)$ of $\Y_{t,\m}^N(H)$ satisfies the equation \begin{equation*}
\Y_t(H)=\Y_0(H)+\int_0^t\Y_s(DH'')\, \text ds + \mathfrak M_t(H). 
\end{equation*}
As a consequence, $\mathfrak{Y}^*$ is concentrated on trajectories $\Y$ solving the martingale problems \eqref{eq:mart1} and \eqref{eq:mart2}, which uniquely \ccl{characterizes  (from \cite[Theorem 0.2, p. 289]{MR1707314})} the generalized Ornstein-Uhlenbeck process $\Y_t$\ccl{, and therefore the proof of Theorem \ref{theo:fluctuations} is concluded}. \ccl{Lemma \ref{lem:macro1} is proved in Section \ref{sec:lemmacro1} below. 
 The proof of \ccl{Proposition \ref{prop:macro2}} is more challenging, it is proved in Sections \ref{sec:CLTvariances_equ}, \ref{sec:hilbert} and \ref{sec:diffusion}. More precisely,  \eqref{eq:macro2} is a consequence of Proposition \ref{prop:relation_variance} and Remark \ref{rem:DDt} (see Section \ref{subseci}), whereas \eqref{eq:BG_principle} is a consequence of Proposition \ref{prop:limit} (see Section \ref{subsecii}).
 }

\subsection{Proof of Lemma \ref{lem:macro1}}
\label{sec:lemmacro1}
In this \ccl{section} we give a proof of Lemma \ref{lem:macro1}. We define for any $f\in \Q$
\begin{equation*}
X^N_{t,\m,f}(H):=-\frac{1}{N\sqrt N} \sum_{x \in \T_N} \nabla_NH\left(\frac x N\right) \tau_x f(\m,t).
\end{equation*}
First, by rewriting \eqref{eq:dec} with  $X^N_{t,\m,f}(H)$ instead of $\Y_{t,\m}^N(H)$, one straightforwardly obtains
\begin{align}
&\mathfrak I^{2,N}_{t,\m,f}(H)+\mathfrak M^{2,N}_{t,\m,f}(H)=X^N_{t,\m,f}(H)-X^N_{0,\m,f}(H)\label{eq:xt}\\
&\quad +\frac{1}{N\sqrt N}\int_0^t \sum_{x\in\T_N} \nabla_{x,x+1}\bigg(\sum_{z\in\T_N} \nabla_NH\Big(\frac z N\Big) \tau_z f\nonumber \\
&\hspace{15em}- \nabla_NH\Big(\frac x N\Big) \Gamma_f\bigg)(\m,s) \text d\big[N_{x,x+1}(\lambda N^2 s)-\lambda N^2  s\big] \label{eq:firstmart}\\
&\quad +\frac{1}{N\sqrt N}\int_0^t \sum_{x\in\T_N} \nabla_{x}\bigg(\sum_{z\in\T_N} \nabla_NH\Big(\frac z N\Big) \tau_z f \nonumber\\
&\hspace{15em}- \nabla_NH\Big(\frac x N\Big) \Gamma_f\bigg)(\m,s) \text d\big[N_{x}(\gamma N^2  s)-\gamma N^2  s\big].\label{eq:decompDoob}\\
& =: X^N_{t,\m,f}(H)-X^N_{0,\m,f}(H)+M^{1,H,f}_{N,t}+M^{2,H,f}_{N,t}\nonumber \vphantom{\int_0^t}
\end{align}
\ccl{where $M^{1,H,f}_{N,t}$ (resp. $M^{2,H,f}_{N,t}$) is the  martingale term \eqref{eq:firstmart} (resp. \eqref{eq:decompDoob}). 
From the elementary inequality $(a+b+c)^2 \leq 3(a^2 + b^2 + c^2)$, we are left to bound the contributions of the squares of the three terms \eqref{eq:xt}, \eqref{eq:firstmart} and \eqref{eq:decompDoob}.

We first estimate the right hand side of \eqref{eq:xt}: remember that $f \in \cQ$. Therefore, using the definition \eqref{eq:quadratic}, and since the coefficients $\psi_{i,j}$ are continuous functions on $\Omega^\cD$, there exists a constant $C(f) >0$ such that 
\[
\sup_{\m \in \Omega^\cD}\big| f(\m, \omega) \big| \leqslant C(f) \sum_{i \in \Lambda_f} \omega_i^2
\]
where $\Lambda_f \subset \mathbb{Z}$ is some finite subset of $\mathbb{Z}$.  This implies,  for any $t\in [0, T]$}
\begin{equation*}
\ccl{\big|X_{t,\m,f}^N(H)\big| \leqslant \frac{1}{N\sqrt N} \sum_{x\in\T_N} \Big|\nabla_NH\left(\frac x N\right)\Big| \;  \big|\tau_x f(\m,\omega(t)) \big| \leqslant \frac{C(f,H)}{N\sqrt N}  \sum_{x \in \T_N} \sum_{i\in\Lambda_f} \omega_{x+i}^2(t),}
\end{equation*} 
\ccl{where the constant $C(H) >0$ comes from the fact that $H$ is of class $C^2$. By conservation of the total energy, we obtain the following bound: 
\[  \sup_{0\leq t \leq T}\big|X_{t,\m,f}^N(H)\big| \leqslant  \frac{C(f,H)|\Lambda_f|}{N\sqrt N}  \sum_{x\in\T_N} \omega_x^2 (0).\]
As a consequence, for some different constants $C(f,H)$ and $C(f,H,\beta)$, we obtain
\begin{equation}
\label{eq:boundsupXt}
\mathbb Q_{\mu_\beta^N}\left[\sup_{0\leq t\leq T}\big(X_{t,\m,f}^N(H) - X_{0,\m,f}^N(H)\big)^2\right]\leq C(f,H)\mathbb{E}_\beta^\star\bigg[ \bigg( \frac{1}{N\sqrt N} \sum_{x \in \T_N} \omega_x^2 \bigg)^2 \bigg] \leqslant \frac{C(f,H,\beta)}{N}.
\end{equation}
 On the other hand, introduce
\begin{equation*}
Y_x(\m,\omega):=\sum_{z \in \T_N}\nabla_NH\left(\frac z N\right)\tau_zf-\nabla_NH\left(\frac x N\right)\sum_{z\in\Z}\tau_z f,
\end{equation*} 
which is ill defined, but for which 
$\nabla_{x,x+1}Y_x(\m,\omega)$ and $\nabla_{x}Y_x(\m,\omega)$ only involves a finite number of non-zero contributions and are therefore well defined. 
For any square integrable martingale $M_t$, denote $[M]_t$ its quadratic variation. One easily computes
\[ \big[M^{1,H,f}_{N,\cdot}\big]_t=\frac{\lambda}{N}\int_0^t \sum_{x\in\T_N} \left(\nabla_{x,x+1}Y_x(\m,s)\right)^2 ds.\]
In particular, since $\E_\beta^\star((\nabla_{x,x+1}Y_x)^2)\leq C(f)/N$ (because $H$ is assumed to be of class $C^2$), we obtain 
\[\mathbb Q_{\mu_\beta^N}\Big[\big[M^{1,H,f}_{N,\cdot}\big]_T\Big]\leq \frac{C(f,H,\lambda,\beta)T}{N},\]
 for some other constant $C(f,H,\gamma,\beta)$, and a similar bound holds for $ [M^{2,H,f}_{N,\cdot}]_T$. As a consequence of Doob's martingale inequality, for any square integrable martingale $M_t$, one has $\E[\sup_{0\leq t \leq T}M_t^2]\leq 4 \E[[M]_T]$. Together with \eqref{eq:boundsupXt}, this yields
\begin{align}
\mathbb Q_{\mu_\beta^N}\bigg[\sup_{0\leq t \leq T} \ccl{\Big(}\mathfrak I^{2,N}_{t,\m,f}(H)+\mathfrak M^{2,N}_{t,\m,f}(H)\ccl{\Big)^2}\bigg] \leq \frac{C(f,H,\lambda,\gamma,\beta)}{N},
\end{align} 
which proves the result.
}

\section{{{Towards the Boltzmann-Gibbs principle}}\label{sec:CLTvariances_equ}}

In this section we are going to identify the diffusion coefficient $D$ that appears in \eqref{eq:decomp}. 
Roughly speaking, $D$ can be viewed as the asymptotic component of the energy current $j_{x,x+1}$ in the direction 
of the gradient $-(\omega_{x+1}^2-\omega_x^2)$, which makes the expression below vanish for any fixed $t\geq 0$ 
\[
\inf_{f \in \cQ}\limsup_{N\to\infty}\ccl{ \ccl{\mathbb Q_{\mu_\beta^N}}} \bigg[ \bigg|\int_0^t \sum_{x \in \T_N} \big[j_{x,x+1}+D(\omega_{x+1}^2-\omega_x^2)+\cL^\m(\tau_x f)\big] \text ds\bigg| \;\bigg], \quad \text{for any } \beta >0.
\] 
Let us start by giving some known tools that will help understand the forthcoming results, at least at an informal level.

\subsection{On additive functionals of Markov processes}
\label{sec:insight}
Consider a continuous time Markov process $\{Y_s\}_{s \geqslant 0}$ on a complete and separable metric space $E$, and admitting an invariant   measure  $\pi$. We denote by $\langle \cdot,\cdot \rangle_\pi$ the inner product in $\mathbf L^2(\pi)$ and by $\cL$  the infinitesimal generator of the process.  The adjoint of $\cL$ in $\mathbf L^2(\pi)$ is denoted by $\cL^\star$. Fix a function $V:E\to\R$ in $\mathbf L^2(\pi)$ such that $\ps{V}_\pi=0$. Theorem 2.7 in  \cite{MR2952852} gives conditions on $V$ which guarantee a central limit theorem for 
\[ \frac{1}{\sqrt{t}} \int_0^t V(Y_s) \text ds\] 
and shows that the limiting variance equals 

\[\sigma^2(V,\pi)=2\lim_{\substack{z \to 0\\ z>0}} \bps{V, (z-\cL)^{-1} V}_\pi.\]
Let the generator $\cL$ be decomposed as $\cL=\S+\A$, where $\S=(\cL+\cL^\star)/2$ and $\A=(\cL-\cL^\star)/2$ are respectively the symmetric and antisymmetric parts of $\cL$.  Let $\cH_1$ be the completion of the quotient of $\mathbf L^2(\pi)$ with respect to constant functions, for the semi-norm $\Vert \cdot \Vert_1$ defined as: 
\[\Vert f \Vert_{1}^2:=\bps{f,(-\cL)f}_\pi=\bps{f,(-\S)f}_\pi.\]
Let $\cH_{-1}$ be the dual space of $\cH_1$ with respect to $\mathbf L^2(\pi)$, in other words, the Hilbert space endowed with the norm $\Vert \cdot \Vert_{-1}$ defined by 
\[\Vert f\Vert^2_{-1}:=\sup_g \left\{ 2 \bps{f,g}_\pi-\Vert g \Vert_1^2\right\},\] where the supremum is carried over some suitable set of  functions $g$. Formally, $\Vert f \Vert_{-1}$ can also be thought as 
\[\bps{f,(-\S)^{-1} f}_{\pi},\]
\ccl{and this identity actually holds rigorously as soon as $f=\S g$ is in the range of $\S$}. Note the difference with the variance $\sigma^2(V,\pi)$ which formally reads 
\[ 2\bps{V,(-\cL)^{-1} V}_\pi=2\left\langle V, \big[(-\cL)^{-1}\big]_s V\right\rangle_\pi.\] 
Hereafter, $B_s$ represents the symmetric part of the operator $B$. We can write, at least formally, that 
\[ \left\{ \big[ (-\cL)^{-1} \big]_s \right\}^{-1}=-\S + \A^\star (-\S)^{-1} \A \geqslant -\S, \] 
where $\A^\star$ stands for the adjoint of $\A$. We have therefore that $\big[ (-\cL)^{-1} \big]_s  \leqslant (-\S)^{-1}$. The following result is a rigorous estimate of the variance in terms of the $\cH_{-1}$ norm, which is proved in \cite[Lemma 2.4]{MR2952852}.

\blem{
\label{lem:varianceKLO}
Given $T>0$ and a mean zero function $V$ in $\mathbf L^2(\pi) \cap \cH_{-1}$, 
\begin{equation}
\E_\pi\left[\sup_{0\leqslant t \leqslant T} \left(\int_0^t V(s) {\rm d}s\right)^2\right] \leqslant 24 T \Vert V \Vert_{-1}^2.
\label{eq:varianceKLO}
\end{equation}
}
In our case, the fact that the symmetric part of the generator does not depend on the disorder  implies that \eqref{eq:varianceKLO} still holds if we take the expectation with respect to the disorder $\P$, and thus replace $\pi$ with $\P\otimes \pi$. If we compare the previous left hand side to the Boltzmann-Gibbs principle \eqref{eq:BG_principle}, the next step should be to take $V$ proportional to 
\begin{equation}
\label{eq:formal} 
\sum_{x \in \T_N} \big[  j_{x,x+1}+D(\omega_{x+1}^2-\omega_x^2)+\cL^\m(\tau_x f)\big]
\end{equation} 
and then take the limit as $N$ goes to infinity. In the right hand side of \eqref{eq:varianceKLO} we will obtain a variance that depends on $N$, and the main task will be to show that this variance converges: this is studied in more details in what follows. Precisely, we prove that the limit of the variance results in a semi-norm, which is denoted by $\vertiii{ \cdot }_\beta$ and defined in \eqref{eq:norm1} below. More explicitly, we are going to see that \eqref{eq:norm1} involves a variational formula, which formally reads 
\[ 
\vertiii{ \varphi }_\beta^2=\ll \varphi ,(-\S)^{-1} \varphi \gg_{\beta,\star} + \frac{1}{\lambda\, \chi(\beta)}\; \ll \varphi \gg_{\beta,\star\star}^2.
\]
The final step consists in minimizing this semi-norm on a well-chosen subspace in order to get the Boltzmann-Gibbs principle, through orthogonal projections in Hilbert spaces. One significant difficulty is that $\vertiii{ \cdot }_\beta$ only depends on the symmetric part of the generator $\S$, and the latter is really degenerate, since it does not have a spectral gap.

 In Subsection \ref{ssec:seminorm}, we relate the previous limiting variance (which is obtained by taking the limit as $N$ goes to infinity) to the suitable semi-norm. Subsection \ref{ssec:macro2} is devoted to proving the Boltzmann-Gibbs principle (using Lemma \ref{lem:varianceKLO}). Note that \eqref{eq:formal} is a sum of local functions in $\cQ_0$, from Proposition \ref{prop:propertiesC0} (recall that $f \in \cQ$). Therefore, all our results will be restricted to that subspace. Then, in Section \ref{sec:hilbert} we investigate the Hilbert space generated by the semi-norm, and prove  decompositions into direct sums. Finally, Section \ref{sec:diffusion} focuses on the diffusion coefficient and its different expressions. 
These three main steps are quite standard, and many of the arguments can be found in \cite{sasolla}. For that reason, we shall be more brief in the exposition, and refer the reader to \cite{sasolla} for more details.

\subsection{Limiting variance and semi-norm\label{ssec:seminorm}}

We now assume $\beta=1$. All statements are valid for any $\beta >0$, and the general argument can be easily  written. In the following, we deliberately keep the notation $\chi(1)$, even if the latter could be replaced with its exact value $\chi(1)=2$. We are going to obtain a variational formula for the \ccl{limiting variance}  
\begin{equation*}
\frac{1}{2\ell}\E_1^\star\bigg[ \left(-\S_{\Lambda_\ell}\right)^{-1} \sum_{\vert x \vert \leqslant \ell_\varphi}\tau_x \varphi, \sum_{\vert x \vert \leqslant \ell_\varphi}\tau_x \varphi \bigg]
\end{equation*}
\ccl{as $\ell\to\infty$}, where $\varphi \in \Q_0$ and $\ell_\varphi=\ell-s_\varphi-1$. \ccl{Note that the quantity above is well defined, since $\sum_{\vert x \vert \leqslant \ell_\varphi}\tau_x \varphi$ has mean $0$ and is therefore in the range of $\S_{\Lambda_\ell}$ (see e.g. \cite[p. 152]{MR1707314}).} We first introduce a semi-norm on $\Q_0$:

\begin{de} \label{de:norm1} For any cylinder function $\varphi$ in $\Q_0$, let us define
\begin{align} &\vertiii{ \varphi }^2_1\!=\sup_{g \in \Q} \left\{ 2 \ll \varphi,g \gg_{1,\star}\!-\frac{\gamma}{2}\E_1^\star\left[\left(\nabla_0\Gamma_g\right)^2\right]\! -\frac{\lambda}{2}\E_1^\star\left[\left(\nabla_{0,1}\Gamma_g\right)^2\right] \right\} \! +\! \frac{1}{\lambda  \chi(1)}\!\ll \varphi \gg^2_{1,\star\star} \label{eq:norm1} \\
~ \notag \\
&=\sup_{\substack{g \in \Q\\ a\in\R}}\Big\{2 \ll \varphi,g \gg_{1,\star}+2a\ll \varphi \gg_{1,\star\star} -\frac{\gamma}{2}\E_1^\star\left[\left(\nabla_0\Gamma_g\right)^2\right] -\frac{\lambda}{2}\E_1^\star\left[\left(a(\omega_0^2-\omega_1^2)+\nabla_{0,1}\Gamma_g\right)^2\right]     \Big\}, \label{eq:norm1bis}\end{align}
where $\ll \cdot \gg_{1,\star}$ and $\ll \cdot \gg_{1,\star\star}$ were introduced in \eqref{eq:prod1}.
\end{de}
\begin{rem}  The second identity in \eqref{eq:norm1bis} follows from an explicit computation of the supremum in $a \in \mathbb{R}$, which can be obtained by standard arguments, using the fact that  $\E_1^\star\left[(\omega_0^2-\omega_1^2)\nabla_{0,1}\Gamma_g\right]=0$ for any $g\in \mathcal{C}$.
\end{rem}
Note that, from Proposition \ref{prop:directcomputations}, one can easily bound $\vertiii{\varphi}^2_1$ for any $\varphi \in \cQ_0$ as follows: if \begin{equation*} \varphi=\sum_{x \in \Lambda} \Big\{\nabla_x(F_x)+\nabla_{x,x+1}(G_x)\Big\},\end{equation*}
then 
\[
\vertiii{ \varphi }^2_1 \leqslant \frac{2}{\gamma} \E_1^\star\bigg[ \Big(\sum_{x \in \Lambda} \tau_{-x}F_x\Big)^2\bigg]+\frac{3}{\lambda} \E_1^\star\bigg[ \Big(\sum_{x \in \Lambda} \tau_{-x}G_x\Big)^2\bigg] < \infty.
\]
We are now in position to state the main result of this subsection. 

\bprop{\label{prop:variance_equ} Consider a quadratic function $\varphi\in\Q_0$. Then 
\begin{equation}
\label{eq:limH-1}
{\lim_{\ell \to \infty} (2\ell)^{-1} \E_1^\star \bigg[ \big(-\S_{\Lambda_\ell}\big)^{-1}\sum_{\vert x \vert \leqslant \ell_\varphi}\tau_x\varphi,\sum_{\vert x \vert \leqslant \ell_\varphi}\tau_x \varphi\bigg]=\ \vertiii{ \varphi }^2_{1}.} 
\end{equation} 
Here, $\ell_\varphi$ stands for $\ell-s_\varphi-1$ so that the support of $\tau_x \varphi$ is included in $\Lambda_\ell$ for every $x \in \Lambda_{\ell_\varphi}$.}

This result is the key ingredient of the standard non-gradient  method. As usual, the proof is done in two steps that we separate as two different lemmas for the sake of clarity.  First, we  bound the variance of a cylinder function $\varphi \in \Q_0$, with respect to $\P^\star_1$, by the semi-norm $\vvvert \varphi \vvvert_1^2$ (\ccl{see Lemma \ref{lem:firststep}}). In the second step, 
a lower bound for the variance can be easily deduced from the variational formula which expresses the variance as a supremum \eqref{def:varfor} \ccl{(see Lemma \ref{lem:secondstep})}. \ccl{The rest of this section is devoted to proving these two bounds.}

\blem{\label{lem:firststep}Under the assumptions of \ccl{Proposition \ref{prop:variance_equ}}, \begin{equation*}{\limsup_{\ell \to \infty} (2\ell)^{-1}\E_1^\star\bigg[ \big(-\S_{\Lambda_\ell}\big)^{-1}\sum_{\vert x \vert \leqslant \ell_\varphi}\tau_x\varphi,\sum_{\vert x \vert \leqslant \ell_\varphi}\tau_x \varphi   \bigg] \leqslant\, \vertiii{ \varphi }^2_1 .} \end{equation*}}
\begin{proof}[\ccl{Proof of Lemma \ref{lem:firststep}}]
\ccl{To prove the lemma, we study the} weak limits of some particular sequences in $\cQ_0$. In the typical approach, 
these weak limits are viewed as \emph{germs of closed forms}, 
but for the harmonic chain, this is not necessary: this is one of the main technical novelties in this work.

Let us start by following the proof given in \cite[Lemma 4.3]{sasolla}\ccl{. We} assume for the sake of clarity that $\varphi=\nabla_0(F)+\nabla_{0,1}(G)$, for two quadratic cylinder functions $F, G$ (the general case can then be deduced quite easily). We write the variational formula 
\begin{multline}
\label{eq:normphi1}
(2\ell)^{-1} \E_1^\star\bigg[ \big(-\S_{\Lambda_\ell}\big)^{-1}\sum_{\vert x \vert \leqslant \ell_\varphi}\tau_x\varphi,\sum_{\vert x \vert \leqslant \ell_\varphi}\tau_x \varphi   \bigg]  = \sup_{h \in \cC} \bigg\{  2\E_1^\star\bigg[ \varphi, \frac{1}{2\ell} \sum_{\vert x \vert \leqslant \ell_\varphi} \tau_x h\bigg] - \frac{1}{2\ell} \cD_\ell(\P^\star_1;h)\bigg\} \\
= \sup_{h \in \cC}\bigg\{  2\E_1^\star\bigg[ F\nabla_0\bigg( \frac{1}{2\ell}\sum_{\vert x \vert \leqslant \ell_\varphi} \tau_x h\bigg)  + G \nabla_{0,1}\bigg(\frac{1}{2\ell} \sum_{\vert x \vert \leqslant \ell_\varphi} \tau_x h\bigg)\bigg] - \frac{1}{2\ell} \cD_\ell(\P^\star_1 ; h) \bigg\}.
\end{multline}
Since $\varphi$ is quadratic, we can restrict the supremum in the class of quadratic functions $h$ with support contained in $\Lambda_\ell$ (\ccl{see Proposition} \ref{prop:variational_restriction}). We can also restrict the supremum to functions $h$ such that $\cD_\ell(\P^\star_1;h)\leqslant C\ell$, as a standard consequence of Proposition \ref{lem:ibp} (namely,  there is some constant $C(\varphi)$ such that the right hand side \ccl{of \eqref{eq:normphi1}} is non-positive when $\cD_\ell(\P^\star_1;h) > C(\varphi)\ell$).
Next, we want to replace the sums over $\Lambda_{\ell_\varphi}$ with the same sums over $\Lambda_\ell$ (recall that $\ell_\varphi = \ell-s_\varphi-1\leqslant \ell$).  For that purpose, we denote 
\begin{equation}
\zeta_0^\ell(h)\ccl{:=}\nabla_0\bigg( \frac{1}{2\ell}\sum_{x= -\ell}^\ell \tau_x h\bigg), \qquad \zeta_1^\ell(h)\ccl{:=}\nabla_{0,1}\bigg(\frac{1}{2\ell} \sum_{x=-\ell+1}^\ell \tau_x h\bigg). \label{eq:def_zeta}
\end{equation}
First of all, from the Cauchy-Schwarz inequality, we have \begin{equation*}
\E_1^\star\bigg[ \frac\gamma 2 \Big(\zeta^\ell_0(h)\Big)^2+\frac \lambda 2\Big(\zeta_1^\ell(h)\Big)^2\bigg] \leqslant \frac{1}{2\ell}\cD_\ell(\P^\star_1;h).
\end{equation*}
Then, from elementary computations (similar to the proof of Proposition \ref{lem:ibp}), we can write \begin{equation*}
\bigg\vert \E_1^\star\bigg[ \varphi,\frac{1}{2\ell} \sum_{\ell_\varphi\leqslant x \leqslant \ell}\tau_x h\bigg] \bigg\vert \leqslant \frac{1}{2\ell} C(\varphi,\gamma) \big( \cD_\ell(\P^\star_1;h) \big)^{1/2},
\end{equation*}
where $C(\varphi,\gamma)$ is a constant which depends only on $\varphi$ and $\gamma$. These last two inequalities give the upper bound 
\begin{multline}
 (2\ell)^{-1} \E_1^\star\bigg[ \big(-\S_{\Lambda_\ell}\big)^{-1}\sum_{\vert x \vert \leqslant \ell_\varphi}\tau_x\varphi,\sum_{\vert x \vert \leqslant \ell_\varphi}\tau_x \varphi \bigg]   \\
 \leqslant \sup_{\substack{h\in\cQ\\ \cD_\ell(\P_1^\star;h) \leqslant C\ell}} \left\{  2\E_1^\star\Big[ F \,\zeta_0^\ell(h) + G\, \zeta_1^\ell(h) \Big] - \E_1^\star\bigg[ \frac \gamma 2\Big(\zeta^\ell_0(h)\Big)^2+\frac \lambda 2\Big(\zeta_1^\ell(h)\Big)^2\bigg] \right\} + {\frac{C}{\sqrt \ell}}, \label{eq:bb1}
\end{multline}
from some constant $C>0$. From now on, we denote generically by $C$ a positive constant that does not depend on $\ell$, but may depend on $\varphi$ (and $\gamma$), and may change from line to line.
%
The conclusion is now based on the following lemma: 
\blem{\label{lem:explicitbis} Assume that $h \in \cQ$ with support in $\Lambda_\ell$. From Definition \ref{de:quadratic}, it   reads as
\[h(\m,\omega) = \sum_{\substack{i,j=-\ell \\ i\neq j}}^\ell \psi_{i,j}(\m) \omega_i\omega_j + \sum_{i=-\ell}^{\ell-1} \psi_{i,i}(\m) (\omega_{i+1}^2-\omega_i^2). \]
Then there exists $a_\ell(\m)$ and $\mathcal{R}_\ell(\m,\omega)$ such that, 
\begin{align}
\zeta_0^\ell(h)& = \nabla_0\big( \Gamma_{h/(2\ell)}\big) \label{eq:id11} \\
\zeta_1^\ell(h)& = \nabla_{0,1} \big( \Gamma_{h/(2\ell)}\big) + a_\ell(\m) (\omega_0^2-\omega_1^2) + \mathcal{R}_\ell(\m,\omega). \label{eq:id22}
\end{align}
Moreover, if $\cD_\ell(\P_1^\star ; h) \leqslant C  \ell$, then
\begin{equation}\label{eq:estimdiri} \E_1^\star \Big[ \big(\mathcal{R}_\ell(\m,\omega)\big)^2\Big] \leqslant \frac{C}{\gamma \ell}.\end{equation}
} 
\begin{proof}[Proof of Lemma \ref{lem:explicitbis}]
The proof of this lemma is rather straightforward, we merely sketch it. First,  given the shape of the function $h$, elementary computations yield that \eqref{eq:id11} and \eqref{eq:id22} hold with
\begin{align*}
a_\ell(\m) & =  \frac{1}{2\ell} \big(\psi_{\ell-1,\ell-1}(\tau_{-\ell}\m) + \psi_{-\ell,-\ell}(\tau_{\ell+1}\m)\big), \\
\mathcal{R}_\ell(\m,\omega) & = \frac{1}{\ell}  \bigg( -\sum_{j=-\ell}^{\ell-1} \psi_{\ell,j}(\tau_{-\ell}\m)\; \omega_{j-\ell} + \sum_{j=-\ell+1}^\ell \psi_{-\ell,j}(\tau_{\ell+1}\m) \; \omega_{j+\ell+1} \bigg)(\omega_1-\omega_0).
\end{align*} 
Then, we straightforwardly obtain, by translation invariance of $\P_1^\star$,
\begin{equation*}
\E_1^\star \Big[ \big(\mathcal{R}_\ell(\m,\omega)\big)^2\Big]\leq \frac{C_1}{\ell^2} \bigg(\E_1^\star \bigg[ \sum_{j=-\ell}^{\ell-1}\big(\psi_{\ell,j}(\m)\big)^2\bigg]+\E_1^\star \bigg[ \sum_{j=-\ell+1}^{\ell}\big(\psi_{-\ell,j}(\m)\big)^2\bigg] \bigg),
\end{equation*}
where $C_1=4\E_1^\star\big[\omega_{-1}^2(\omega_1-\omega_0)^2\big]=8 \E_1^\star\big[\omega_{-1}^2\omega_0^2\big]=8$. Furthermore, since both parts of the Dirichlet form given in \eqref{eq:dirich} are non-negative, in particular, we have
\begin{align*}
\cD_\ell(\P_1^\star ; h)\geq& \; \frac{\gamma}{2}\sum_{x\in\Lambda_\ell}\E_1^\star\Big[\big(h(\m, \omega^x)-h(\m, \omega)\big)^2\Big]\\
\geq&\;\frac{\gamma}{2}\E_1^\star\Big[\big(h(\m, \omega^\ell)-h(\m, \omega)\big)^2\Big]+  \frac{\gamma}{2}\E_1^\star\Big[\big(h(\m, \omega^{-\ell})-h(\m, \omega)\big)^2\Big]\\
=& \;\frac{\gamma}{2}\E_1^\star \bigg[ 16 \omega_\ell^2 \; \bigg(\sum_{j=-\ell}^{\ell-1}\psi_{\ell,j}(\m)\omega_j\bigg)^2\bigg]+\frac{\gamma}{2}\E_1^\star \bigg[16\omega_{-\ell}^2 \bigg(\sum_{j=-\ell+1}^{\ell}\psi_{-\ell,j}(\m)\omega_j\bigg)^2\bigg]\\
= &\;8\gamma \E_1^\star \bigg[ \sum_{j=-\ell}^{\ell-1}\big(\psi_{\ell,j}(\m)\big)^2\bigg]+8 \gamma \E_1^\star \bigg[ \sum_{j=-\ell+1}^{\ell}\big(\psi_{-\ell,j}(\m)\big)^2\bigg] .
\end{align*}
The previous two  bounds finally yield
\[\E_1^\star \Big[ \big(\mathcal{R}_\ell(\m,\omega)\big)^2\Big]\leq\frac{1}{\gamma \ell^2} \cD_\ell(\P_1^\star ; h),\]
which proves \eqref{eq:estimdiri}. \end{proof}

Lemma \ref{lem:explicitbis} above permits to bound  the limit as $\ell\to\infty$ of \eqref{eq:bb1} by 
\begin{multline}
\sup_{\substack{f \in \cQ \\ a:\ccl{\Omega^{\cD}}\to\R }} \bigg\{  2\E_1^\star\Big[ F \,\nabla_0\Gamma_f + G \big(a(\m)(\omega_0^2-\omega_1^2) + \nabla_{0,1}\Gamma_f\big) \Big] \\
- \E_1^\star\bigg[ \frac \gamma 2\big(\nabla_0\Gamma_f\big)^2+\frac \lambda 2\big(a(\m)(\omega_0^2-\omega_1^2)+\nabla_{0,1}\Gamma_f\big)^2\bigg] \bigg\}=:\sup_{\substack{f \in \cQ \\ a:\ccl{\Omega^{\cD}}\to\R }} \mathfrak{H}(\varphi, a, f),\label{eq:bb2} 
\end{multline}
where we denote  by $\mathfrak{H}(\varphi, a, f)$ the quantity inside brackets.
To conclude we want to restrict the supremum on \emph{real numbers} $a$ which do not depend on the disorder. This is done in a similar way as in \cite[Lemma 7.7]{MR2021195}. To that aim, for any positive $\varepsilon$, fix $a_\varepsilon(\m)$ such that
\begin{equation}
\label{eq:aepsilon}
\sup_{f \in \cQ } \mathfrak{H}(\varphi, a_\varepsilon, f)\geq \sup_{\substack{f \in \cQ \\ a:\ccl{\Omega^{\cD}}\to\R }} \mathfrak{H}(\varphi, a, f)-\varepsilon,
\end{equation}
and shorten $\tilde a_\varepsilon(\m):=a_\varepsilon(\m)-\E[a_\varepsilon]$. Let us define, for any $x \in \Z$, the function $b_x \in \mathbf{L}^2(\P)$ given by 
\[ b_x(\m) = \begin{cases} \displaystyle
\sum_{k=0}^{x-1} \tau_k \tilde a_\varepsilon(\m) & \text{ if } x \geqslant 1, \\
\displaystyle \sum_{k=x}^{-1} - \tau_k \tilde a_\varepsilon(\m) & \text{ if } x \leqslant -1.
\end{cases}  \qquad \text{ and } \quad b_0(\m)=0,\]
which is defined in such a way that for any $x \in \Z$, $b_{x+1}(\m)-b_x(\m) = \tau_x \tilde a_\varepsilon(\m).$ 
For any $n\in\N$, let  us introduce the quadratic function 
\[g_n(\m,\omega) = - \sum_{x \in \Lambda_n} b_x(\m) \omega_x^2.\]
One can easily check that, for any $z \in \Z$ such that $\{z,z+1\}\subset \Lambda_n$, 
\[\nabla_{z,z+1}(g_n) = \tau_z \tilde a_\varepsilon(\m)(\omega_{z+1}^2-\omega_z^2), \qquad \text{and} \qquad \nabla_0(g_n) = 0.\]
Therefore, letting $\tilde f_n:=f+ g_n/(2n)$ which still belongs to $\cQ$, we get
\begin{align*}
&\nabla_0\Gamma_f  = \nabla_0\Gamma_{\tilde f_n} \\
&\nabla_{0,1}\Gamma_f + a_\varepsilon (\m)(\omega_0^2-\omega_1^2)  = \nabla_{0,1}\Gamma_{\tilde f_n} + \E[a_\varepsilon](\omega_0^2-\omega_1^2) +\mathfrak{R}_n(\m,\omega), \end{align*}where 
\[\mathfrak{R}_n(\m,\omega) =  \tilde a_\varepsilon(\m)(\omega_0^2-\omega_1^2) - \frac{1}{2n} \nabla_{0,1}\Gamma_{g_n}.
\]
We are now going to estimate the $\mathbf{L}^2(\P_1^\star)$-norm of $\mathfrak{R}_n$, as follows: basic computations show that 
\begin{align*} 
\frac{1}{2n} \nabla_{0,1}\Gamma_{g_n} = \tilde a_\varepsilon(\m)(\omega_0^2-\omega_1^2) & + \frac{1}{2n} \tau_{-n}\Big(b_n(\m)(\omega_n^2-\omega_{n+1}^2)\Big)  \\ & + \frac{1}{2n} \tau_{n+1}\Big(b_{-n-1}(\m)(\omega_{-n-1}^2 - \omega_{-n}^2)\Big).
\end{align*}
Hence, from the Cauchy-Schwarz inequality and translation invariance of $\P$, it is enough to show that 
\begin{equation}
\label{eq:boundbn}
 \frac{\E\big[b_n^2\big]}{n^2}=\frac{1}{n^2}\E\bigg[\bigg(\sum_{k=0}^{n-1}\tau_k \tilde a_\varepsilon(\m)\bigg)^2\bigg] \xrightarrow[n\to\infty]{} 0, \qquad \text{and} \qquad \frac{\E\big[b_{-n-1}^2\big]}{n^2}\xrightarrow[n\to\infty]{} 0.  
 \end{equation}
These convergences are standard consequences of the translation invariance of $\P$: more precisely, let us fix a positive integer $p$ and introduce for any $x \in \Z$ the conditional expectation 
 \[
 \widetilde a_x^{(\varepsilon, p)} = \E\Big[ \tau_x \tilde a_\varepsilon(\m) \; \big| \; m_y \; ; \; {y \in \Lambda_p(x)} \Big]. 
 \]
 From our assumptions, note that $\widetilde a_x^{(\varepsilon, p)} = \tau_x \widetilde a_0^{(\varepsilon,p)}$ and $\E\big[ \widetilde a_x^{(\varepsilon,p)} \big] =0$. As a result,
 \begin{align*}
 \frac{1}{n^2} \E\bigg[\bigg(\sum_{k=0}^{n-1}\tau_k \tilde a_\varepsilon(\m)\bigg)^2\bigg] &\leqslant  \frac{2}{n^2} \E\bigg[\bigg(\sum_{k=0}^{n-1}\Big\{\tau_k \tilde a_\varepsilon(\m)-\widetilde a_k^{(\varepsilon,p)}  \Big\}\bigg)^2\bigg]+ \frac{2}{n^2} \E\bigg[\bigg(\sum_{k=0}^{n-1}\widetilde a_k^{(\varepsilon,p)} \bigg)^2\bigg]\\
 & \leqslant 2 \E \bigg[ \Big\{\tilde  a_\varepsilon(\m)-\widetilde a_0^{(\varepsilon,p)}  \Big\}^2\bigg] + \frac{C(\varepsilon,p)}{n}.
 \end{align*}
The last inequality comes from the fact that $\sum \widetilde a_k^{(\varepsilon, p)}$ is  a sum of identically distributed variables (because of the translation invariance of $\P$), for which we have a good control of the variance. Letting now, in the bound above, $n\to\infty$, and then $p\to\infty$, we obtain that \eqref{eq:boundbn} holds, thus \eqref{eq:aepsilon} rewrites
\begin{equation*}
\sup_{f \in \cQ } \mathfrak{H}(\varphi, \E[a_\varepsilon], f)\geq \sup_{\substack{f \in \cQ \\ a:\ccl{\Omega^{\cD}}\to\R }} \mathfrak{H}(\varphi, a, f)-\varepsilon.
\end{equation*}
Since this holds for any $\varepsilon>0$, we finally obtain as wanted that \begin{equation*}
\sup_{\substack{f \in \cQ \\ a\in\R }} \mathfrak{H}(\varphi, a, f)= \sup_{\substack{f \in \cQ \\ a:\ccl{\Omega^{\cD}}\to\R }} \mathfrak{H}(\varphi, a, f),
\end{equation*}
and therefore
\begin{multline*}
 (2\ell)^{-1} \E_1^\star\bigg[ \big(-\S_{\Lambda_\ell}\big)^{-1}\sum_{\vert x \vert \leqslant \ell_\varphi}\tau_x\varphi,\sum_{\vert x \vert \leqslant \ell_\varphi}\tau_x \varphi   \bigg]  \\
  \le  \sup_{\substack{g \in \cQ \\a \in \R}}  \bigg\{  2\E_1^\star\Big[ F \,\nabla_0\Gamma_g + G\, \big(a(\omega_0^2-\omega_1^2) + \nabla_{0,1}\Gamma_g\big) \Big]  \\
  -\frac \gamma 2 \E_1^\star\Big[ (\nabla_0\Gamma_g)^2\Big]-\frac\lambda 2\E_1^\star\Big[\big(a(\omega_0^2-\omega_1^2) + \nabla_{0,1}\Gamma_g\big)^2\Big] \bigg\}.
\end{multline*}
Lemma \ref{lem:firststep} follows, after recalling \eqref{eq:norm1bis}.
\end{proof}

\medskip

We now turn to the upper bound. 

\blem{
\label{lem:secondstep}
Under the assumptions of \ccl{Proposition \ref{prop:variance_equ}}, 
\[
\limsup_{\ell \to \infty}(2\ell)^{-1} \E_1^\star\bigg[ \big(-\S_{\Lambda_\ell}\big)^{-1}
\sum_{\vert x \vert \leqslant \ell_\varphi}\tau_x\varphi,\sum_{\vert x \vert \leqslant \ell_\varphi}\tau_x \varphi   \bigg]  
\geqslant\, \vertiii{ \varphi }^2_1 . 
\] }

\bprf{[\ccl{Proof of Lemma \ref{lem:secondstep}}]\ccl{F}or $f \in \Q$, define $\ell_f=\ell-s_f-1$ and
\begin{equation*}
J_\ell:=\sum_{y,y+1 \in\Lambda_\ell} \tau_y j_{0,1}^S, \qquad H_\ell^f:=  \sum_{\vert y \vert \leqslant \ell_f} \S (\tau_y f). 
\end{equation*}
The following limits hold:
\begin{align} 
\lim_{\ell \to \infty} (2\ell)^{-1}\E_1^\star\bigg[\big(-\S_{\Lambda_\ell}\big)^{-1} \sum_{\vert x \vert \leqslant \ell_\varphi}\tau_x \varphi,  \;  J_\ell \bigg]
& = \quad -\ll \varphi \gg_{1,\star\star}\ , \label{eq:limit1} \\ 
\lim_{\ell \to \infty} (2\ell)^{-1}\E_1^\star\bigg[\big(-\S_{\Lambda_\ell}\big)^{-1}\sum_{\vert x \vert \leqslant \ell_\varphi}\tau_x\varphi, \;  H_\ell^f     \bigg]
&=\quad - \ll \varphi,f \gg_{1, \star}\ , \notag\\
\lim_{\ell \to\infty} (2\ell)^{-1} \E_1^\star\bigg[\big(-\S_{\Lambda_\ell}\big)^{-1}\Big(aJ_\ell+H_\ell^f \Big),  \Big(aJ_\ell +H_\ell^f\Big) \bigg] 
&= \notag\\
 \frac{\lambda}{2}\E_1^\star\Big[\big(a(\omega_0^2-\omega_1^2)&+\nabla_{0,1}\Gamma_f\big)^2\Big]+\frac{\gamma}{2}\E_1^\star\Big[\big(\nabla_0\Gamma_f\big)^2\Big].  \notag
\end{align}
We only prove \eqref{eq:limit1}, the other relations can be obtained in a similar way. 
As previously, we assume for the sake of simplicity that $\varphi=\nabla_0(F) +\nabla_{0,1}(G)$.
One can easily check the elementary identity 
\begin{equation}
\S_{\Lambda_\ell}\bigg(\sum_{x \in \Lambda_\ell} x \omega_x^2\bigg)=J_\ell(\omega).
\label{eq:identity1} 
\end{equation} 
Therefore,
\begin{align*}
 (2\ell)^{-1}\E_1^\star\bigg[\big(-\S_{\Lambda_\ell}\big)^{-1}&\sum_{\vert x \vert \leqslant \ell_\varphi}\tau_x \varphi,   J_\ell \bigg] \\
& = -(2\ell)^{-1} \sum_{y\in\Lambda_\ell} \sum_{\vert x \vert \leqslant \ell_\varphi} y \ \E_1^\star\big[ \varphi \, \omega_{y-x}^2\big] \\
 &= -(2\ell)^{-1}  \sum_{y\in\Lambda_\ell} \sum_{\vert x \vert \leqslant \ell_\varphi} y\ \E_1^\star\big[G\, \nabla_{0,1}(\omega_{y-x}^2)\big]\\
 & = -(2\ell)^{-1} \sum_{\vert x \vert \leqslant \ell_\varphi} x\ \E_1^\star\big[G\, \nabla_{0,1}(\omega_{0}^2) \big]+ (x+1) \E_1^\star\big[G\, \nabla_{0,1}(\omega_{1}^2)\big]\\
 & =- (2\ell)^{-1} (2\ell_\varphi+1)\ \E_1^\star\big[G(\omega_0^2-\omega_1^2)\big] \xrightarrow[\ell\to\infty]{} -\ll \varphi \gg_{1,\star \star}.
 \end{align*}
 The last limit comes from Proposition \ref{prop:directcomputations}  and the fact that $\ell_\varphi=\ell-s_\varphi-1$. We also have used the translation invariance of $\P_1^\star$. Then, we use the variational formula \eqref{def:varfor}, chosing 
 \[h=(\S_{\Lambda_\ell})^{-1}(aJ_\ell + H_\ell^f)=a\sum_{y \in \Lambda_\ell} y \omega_y^2+\sum_{\vert y \vert \leqslant \ell_f} \tau_y f,\] 
 we obtain:
 \begin{align*}
 &\liminf_{\ell\to\infty}  \, (2\ell)^{-1} \E_1^\star\bigg[ \big(-\S_{\Lambda_\ell}\big)^{-1}
 \sum_{\vert x \vert \leqslant \ell_\varphi}\tau_x\varphi,\sum_{\vert x \vert \leqslant \ell_\varphi}\tau_x \varphi  \bigg] \\ 
 &\quad \geqslant \liminf_{\ell \to \infty} \,  (2\ell)^{-1} \bigg\{ 2 \E_1^\star\bigg[ \sum_{\vert x \vert \leqslant \ell_\varphi}\tau_x \varphi
 ,  \, a\sum_{y \in \Lambda_\ell} y \omega_y^2+\sum_{\vert y \vert \leqslant \ell_f} \tau_y f \bigg]\\ 
& \hspace{20em}+\E_1^\star\Big[ a\sum_{y \in \Lambda_\ell} y \omega_y^2+\sum_{\vert y \vert \leqslant \ell_f} \tau_y f, aJ_\ell +H_\ell^f \Big]\bigg\}\\
 & \quad = 2a \ll \varphi \gg_{1,\star\star}  + 2 \ll \varphi, f\gg_{1,\star}  - 
 \frac{\lambda}{2}\E_1^\star\Big[\big(a(\omega_0^2-\omega_1^2)+\nabla_{0,1}\Gamma_f\big)^2\Big]-\frac{\gamma}{2}\E_1^\star\Big[\big(\nabla_0\Gamma_f\big)^2\Big].  
 \end{align*}
The result follows after taking the supremum on $f \in \Q$ and $a\in \R$, and recalling \eqref{eq:norm1bis}. }

\subsection{Proof of \ccl{Proposition \ref{prop:macro2}} \label{ssec:macro2}}

In this paragraph, we  \ccl{prove Proposition \ref{prop:macro2}} by using the  result given in \ccl{Proposition \ref{prop:variance_equ}, and leaving technical parts for the next two sections.}  First, we show how to relate \eqref{eq:macro2} to such variances, as was rapidly sketched in Section \ref{sec:insight}. Recall that we have assumed for convenience $\beta=1$, but the same argument remains in force for any $\beta>0$. 

\begin{prop} \label{prop:variance_bound}
Let $\psi \in \cC_0$, with $s_\psi \leqslant N$. Then \begin{equation}
\ccl{\mathbb Q_{\mu_1^N}}\left[ \sup_{0\leqslant t \leqslant T} \left\{\int_0^t \psi(s) \, {\rm d}s\right\}^2\right] \leqslant \frac{24T}{N^2}\E^\star_{1}\big[ \psi, (-\S_N)^{-1} \psi\big]. \label{eq:vari}
\end{equation}
\end{prop}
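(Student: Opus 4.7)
The plan is to apply the Kipnis--Varadhan variance estimate of Lemma \ref{lem:varianceKLO} quenched (i.e.~for fixed disorder $\m$) to the accelerated generator $N^2\cL_N^\m$, and then integrate against $\P$.

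First, I would fix $\m\in\Omega_\cD$ and verify that Lemma \ref{lem:varianceKLO} applies with $V=\psi$, invariant measure $\pi=\mu_1^N$, and generator $N^2\cL_N^\m$. Since $\psi\in\cC_0$ is a finite combination of gradients of the form $\nabla_x(F_x)$ and $\nabla_{x,x+1}(G_x)$, it has zero mean under $\mu_1^N$ and lies in $\mathbf L^2(\mu_1^N)$; moreover Proposition \ref{prop:propertiesC0} ensures that $\langle\psi,(-\S_N)^{-1}\psi\rangle_1$ is finite, which, up to the rescaling computed below, is exactly the $\cH_{-1}$ quantity appearing on the right-hand side of \eqref{eq:varianceKLO}. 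The hypothesis $s_\psi\leqslant N$ guarantees that $(-\S_N)^{-1}\psi$ is well-defined on the torus $\T_N$.

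Second, I would identify the $\cH_{-1}$-norm associated with the generator $N^2\cL_N^\m$. Starting from the variational formula
\[
\|h\|^2_{-1,N^2\cL_N^\m}\;=\;\sup_g\Big\{2\langle h,g\rangle_1-\bps{g,(-N^2\cL_N^\m)g}_1\Big\},
\]
I would observe that only the symmetric part $N^2\S_N$ survives in the Dirichlet form on the right, and that the $N^2$ prefactor can be scaled out. Optimizing in $g$ then yields
\[
\|\psi\|^2_{-1,N^2\cL_N^\m}\;=\;\frac{1}{N^2}\,\bps{\psi,(-\S_N)^{-1}\psi}_1.
\]
Plugged into Lemma \ref{lem:varianceKLO} this gives, for each fixed $\m$, the quenched bound
\[
\E_{\mu_1^N}\!\left[\sup_{0\leqslant t\leqslant T}\Big(\int_0^t\psi(s)\,\text ds\Big)^2\right]\;\leqslant\;\frac{24T}{N^2}\,\bps{\psi,(-\S_N)^{-1}\psi}_1.
\]

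Finally, since the symmetric part $\S_N$ does not depend on the disorder $\m$, I would integrate both sides against $\P$. Fubini's theorem applies (the integrand is non-negative), transforming the left-hand side into $\E^\star_{\mu_1^N}[\,\cdot\,]$ and the right-hand side into $\frac{24T}{N^2}\E^\star_1[\psi,(-\S_N)^{-1}\psi]$, which is precisely \eqref{eq:vari}.

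The only real technical point, and the expected source of arithmetic error, is tracking the $N^2$-scaling of the $\cH_{-1}$-norm under the diffusive time rescaling; this is exactly what produces the $1/N^2$ factor on the right-hand side. Once this rescaling is handled, the argument is a direct application of the Kipnis--Varadhan inequality, crucially exploiting the fact that the noise generator $\S_N$ is disorder-free so that the quenched bound can be promoted into the annealed one by a mere integration.
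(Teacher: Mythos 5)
Your proposal is correct and follows essentially the same route as the paper: the paper simply invokes Lemma \ref{lem:varianceKLO} (Lemma 2.4 of Komorowski--Landim--Olla) for the process generated by $N^2\cL_N^\m$, with the $1/N^2$ factor coming from the time-acceleration in the $\cH_{-1}$-norm, and then averages over the disorder using that $\S_N$ is disorder-independent. Your spelled-out verification of the hypotheses (zero mean, $\psi\in\cH_{-1}$ via Proposition \ref{prop:propertiesC0}, the scaling of the variational formula, and Fubini for the annealing step) matches that argument.
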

This result is proved for example in \cite[Section 2, Lemma 2.4]{MR2952852}, when there is no disorder. The  average w.r.t.~the disorder can be added (as in the estimate \eqref{eq:vari}) without any trouble, since $\S_N$ does not depend on $\m$.
 We are going to use this bound for functions of type $\sum_x G(x/N) \tau_x \varphi$, where $\varphi$ belongs to $\Q_0$. The main result of this subsection is the following.

\begin{prop}\label{prop:relation_variance}
Let $\varphi \in \Q_0$, and $G \in C^2(\T)$. Then, \begin{equation}
\limsup_{N\to\infty}\ccl{\mathbb Q_{\mu_1^N}}\left[ \sup_{0\leqslant t \leqslant T} \bigg\{\sqrt N\int_0^t \sum_{x\in\T_N}G\left(\frac x N\right)\tau_x \varphi(\m,s){\rm d}s\bigg\}^2\;\right] \leqslant CT \vertiii{ \varphi }_{1}^2 \int_\T G^2(u) {\rm d}u. \label{eq:theo_variance}
\end{equation}
\end{prop}

\begin{proof}
From Proposition \ref{prop:variance_bound}, the left hand side of \eqref{eq:theo_variance} is bounded by 
\begin{equation*}
24T\, \E_1^\star\bigg[ \sqrt N \sum_{x\in\T_N} G\left(\frac x N\right) \tau_x \varphi, (-N^2 \S_N)^{-1} \bigg(\sqrt N \sum_{x\in\T_N} G\left(\frac x N \right) \tau_x \varphi\bigg)\bigg],
\end{equation*} 
which can be written with the variational formula as 
\begin{equation*}
24T \,\sup_{f \in \cC} \bigg\{\sqrt N \sum_{x\in\T_N} G\left(\frac x N\right) \E_1^\star\big[ f\, \tau_x \varphi\big] -N^2\cD_N(\P^N_1;f) \bigg\}.
\end{equation*}
Since $\varphi \in \Q_0$, from Proposition \ref{prop:variational_restriction} we can restrict the supremum over $f \in \cQ$. Proposition \ref{lem:ibp} gives
\begin{equation*}
\E_1^\star\big[f \, \tau_x \varphi\big] \leqslant C(\varphi,\gamma) \E_1^\star\Big[ \tau_{-x} f, (-\S_{\Lambda_\varphi}) (\tau_{-x} f)\Big]^{1/2}
\end{equation*} and by Cauchy-Schwarz inequality, \begin{equation*}
\sqrt N \sum_{x\in\T_N}G\left(\frac x N\right)\E_1^\star\big[f\, \tau_x\varphi\big]\leqslant\bigg(\frac 1 N\sum_{x\in\T_N}G\left(\frac x N\right)^2\bigg)^{1/2} N \, C(\varphi,\gamma)\,\E_1^\star\big[f, (-\S_N) f\big]^{1/2}.
\end{equation*}
The supremum on $f$ can be explicitly computed, and gives the final bound
\begin{equation}
 \ccl{\mathbb Q_{\mu_1^N}}\bigg[ \sup_{0\leqslant t \leqslant T} \bigg\{\sqrt N\int_0^t \sum_{x\in\T_N}G\left(\frac x N\right)\tau_x \varphi(\m,s)\text ds\bigg\}^2\;\bigg] \leqslant  C'(\varphi,\gamma) T\bigg(\frac{1}{N} \sum_{x\in\T_N} G\left( \frac x N\right)^2\bigg). \label{eq:boundvariance}
\end{equation}
We are now going to  show that, after sending $N$ to infinity, the constant on the right hand side is proportional to $\vertiii{ \varphi }_1^2$. For that purpose, we average on microscopic boxes: for $\ell \ll N$, we denote 
\begin{equation*} 
\overline \varphi_\ell=\frac{1}{2\ell_\varphi+1}\sum_{|y|\leq \ell_\varphi} \tau_y \varphi, \end{equation*}
where as before $\ell_\varphi=\ell-s_\varphi-1$. We want to substitute 
\begin{equation*}
\sqrt N \sum_{x\in\T_N} G\left(\frac x N\right) \tau_x \varphi
\end{equation*} with 
\begin{equation*}
\sqrt N\sum_{x\in\T_N} G\left(\frac x N\right) \tau_x\overline\varphi_\ell.
\end{equation*}
The error term that appears is estimated by \begin{equation*}
\ccl{\mathbb Q_{\mu_1^N}}\bigg[ \sup_{0\leqslant t \leqslant T} \bigg\{\sqrt N\int_0^t \sum_{\substack{x,y\in\T_N\\ \vert x-y\vert \leqslant \ell_\varphi}}\frac{1}{2\ell_\varphi+1}\left(G\left(\frac x N\right)- G\left(\frac y N \right)\right)\tau_x \varphi(\m,s)\text ds\bigg\}^2\;\bigg].
\end{equation*}
Since $G_\ell(x):=G(x/N)-(2\ell_\varphi+1)^{-1}\sum_{\vert y-x\vert \leqslant \ell_\varphi} G(y/N)$ is of order $\ell/N$, we obtain from \eqref{eq:boundvariance} that the expression above is bounded by $C(\ell)/N^2$, and therefore vanishes as $N \to \infty$. We are now reduced to estimate 
\begin{equation}
\ccl{\mathbb Q_{\mu_1^N}}\bigg[ \sup_{0\leqslant t \leqslant T} \bigg\{\sqrt N\int_0^t \sum_{x\in\T_N}G\left(\frac x N\right)\tau_x \overline \varphi_\ell(\m,s)\text ds\bigg\}^2\;\bigg].\label{eq:ty}
\end{equation}
Using once again \eqref{eq:vari} and the variational formula for its right hand side, one obtains straightforwardly, using the translation invariance of $\P_1^\star$, that \eqref{eq:ty} is bounded by 
\begin{align*}
CT\sup_{g\in\cQ}\bigg\{\sqrt{N}&\sum_{x\in\T_N}  G\left(\frac x N\right) \E_1^\star\big[g\, \tau_x \overline \varphi_\ell\big] - N^2\E_1^N\big[ g, \left(-\S_N\right) g\big]  \bigg\}\\
& \leq CT\sup_{g\in\cQ}\bigg\{\sqrt{N}\sum_{x\in\T_N}  G\left(\frac x N\right) \E_1^\star\big[\tau_{-x}g\,  \overline \varphi_\ell\big] - \frac{N^2}{2\ell+1}\sum_{x\in \T_N}\E_1^\star\left[ g, \big(-\S_{\Lambda_\ell(x)}\big) g\right]  \bigg\}\\
& \leq \frac{CT(2\ell+1)}{N}\sum_{x\in\T_N} G^2\left(\frac x N\right) \sup_{f\in\cQ}\Big\{\E_1^\star\big[f\, \overline \varphi_\ell \big] - \E_1^\star\big[ f,\left(-\S_{\Lambda_\ell}\right) f\big]  \Big\}\\
& \leq \frac{ CT(2\ell+1)}{N}\sum_{x\in\T_N} G^2\left(\frac x N\right) \sup_{f\in\cQ_\ell}\Big\{\E_1^\star\big[f\, \overline \varphi_\ell \big] - \E_1^\star\big[ f, \left(-\S_{\Lambda_\ell}\right) f\big]  \Big\},
\end{align*}
where in the last inequality we denote by $\cQ_\ell$ the set of functions in $\cQ$ depending only on the sites in $ \Lambda_{\ell-1}$. To obtain the second bound, we split the supremum over $x$, and let $f:=(2\ell+1)\tau_{-x}g/G(x/N)$, and to obtain the third bound, we used the convexity of the Dirichlet form to replace $f$ by its conditional expectation w.r.t.~sites in $\Lambda_{\ell-1}$. Since $\varphi\in \cQ_0$, from Corollary \ref{cor:wd}, one straightforwardly obtains, using the polarization identity related to \eqref{eq:dirich} and the elementary inequality $ab\leq \frac14 a^2+b^2$, that 
\begin{align}\E_1^\star\big[f\, \overline \varphi_\ell \big]& \!=\! \E_1^\star\big[f (- \S_{\Lambda_\ell})(-\S_{\Lambda_\ell})^{-1}\, \overline \varphi_\ell\big] \nonumber\\ 
&\! =\! \frac \gamma 2 \sum_{x =-\ell}^\ell \E_1^\star\big[ \nabla_x f , \nabla_x \big((-\S_{\Lambda_\ell})^{-1}\,\overline\varphi_\ell\big)\big]+ \frac \lambda 2  \sum_{x=-\ell }^{\ell-1} \E_1^\star\big[ \nabla_{x,x+1} f ,  \nabla_{x,x+1} \big((-\S_{\Lambda_\ell})^{-1}\,\overline\varphi_\ell\big)\big]\nonumber\\
&\!\leq\! \frac{1}{4}\E_1^\star\big[ \overline\varphi_\ell , (-\S_{\Lambda_\ell})^{-1}\,\overline\varphi_\ell\big]+\E_1^\star\big[ f(-\S_{\Lambda_\ell})f\big]\label{eq:crossproduct}.\end{align}
We can now plug this bound in the previous estimate, let $\ell\to \infty$ after $N\to\infty$ and use \ccl{Proposition \ref{prop:variance_equ}} to finally obtain as wanted
 \begin{equation}
\limsup_{N\to\infty} \ccl{\mathbb Q_{\mu_1^N}}\left[ \sup_{0\leqslant t \leqslant T} \bigg\{\sqrt N\int_0^t \sum_{x\in\T_N}G\left(\frac x N\right)\tau_x \varphi(\m,s){\rm d}s\bigg\}^2\;\right] \leqslant CT \vertiii{ \varphi }_{1}^2 \int_\T G^2(u) {\rm d}u. 
\end{equation}
\end{proof}

\subsubsection{Proof of \ccl{Proposition \ref{prop:macro2} (i)}}  \label{subseci}

We apply Proposition \ref{prop:relation_variance} to $\mathfrak I^{1,N}_{t,\m,f}(H)$, and we get \begin{equation*}
\limsup_{N\to\infty} \ccl{\mathbb Q_{\mu_1^N}}\bigg[ \sup_{0\leqslant t \leqslant T} \left(\mathfrak I^{1,N}_{t,\m,f}(H)\right)^2\bigg] \leqslant CT \vertiii{ j_{0,1}+D(\omega_1^2-\omega_0^2)+\cL^\m f }_1^2\; \int_\T H'(u)^2 \text du.
\end{equation*}
\ccl{We will show in Section \ref{sec:hilbert}, Lemma \ref{lem:def_d} the following result: 
there exists a unique number $\widetilde{D}$, and a sequence of cylinder functions $\{f_k\} \in \Q$ such that \begin{equation*}
\vertiii{ j_{0,1} + \widetilde{D}(\omega_1^2-\omega_0^2)+\cL^\m f_k }_1 \xrightarrow[k\to\infty]{}0.
\end{equation*}
Recall that for any quadratic function $f\in \cQ$, $\vertiii{ f }_\beta=\beta^{-2}\vertiii{ f }_1$, therefore in particular, this convergence also holds with the same constant $\widetilde{D}$ and the same sequence $\{f_k\}$ if we replace the semi-norm $\vertiii{ \cdot }_1$ with $\vertiii{ \cdot }_\beta$ for any $\beta>0$ (as a consequence of a standard change of variables argument). To conclude the proof of Proposition \ref{prop:macro2}, it only remains to prove the identity $\widetilde{D}=D$ to obtain the first statement of Proposition \ref{prop:macro2}, which will be done below in Remark \ref{rem:DDt}.} 

\subsubsection{Proof of \ccl{Proposition \ref{prop:macro2} (ii)}}  \label{subsecii}

The second statement  \eqref{eq:BG_principle} will be proved in Section \ref{sec:diffusion}, see Proposition \ref{prop:limit} below.


\section{Construction of the sequence $\{f_k\}$\label{sec:hilbert}}

We now focus on the semi-norm $\vertiii{ \cdot }_1$ that was introduced in the previous section, see \eqref{eq:norm1}. 
We can easily define from $\vertiii{ \cdot }_1$ a semi inner product on $\cC_0$ through polarization, 
which is denoted by $\ll \cdot, \cdot \gg_1$. Let $\mathcal{N}$ be the kernel of the semi-norm $\vertiii{ \cdot }_1$ on $\cC_0$. 
Then, the completion of  $\Q_0\ccl{/\mathcal{N}}$ \ccl{(the quotient of $\Q_0$ by $\mathcal{N}$),} denoted by $\mathcal{H}_1$ is a Hilbert space.  
Let us explain how Varadhan's non-gradient  approach is modified. Usually, the Hilbert space on which orthogonal projections 
are performed is the completion of $\cC_0\ccl{/\mathcal{N}}$, in other words it involves all local functions. 
Then, the standard procedure aims at proving that each element of that Hilbert space can be approximated by a sequence of functions 
in the range of the generator plus an additional term which is proportional to the current. 
Since for our model, all functions of interest are in $\cQ$, and since the decomposition of germs of closed form is explicit 
in the set $\cQ$ (recall \eqref{eq:id11} and  \eqref{eq:id22}), the crucial step to obtain this decomposition  is to control 
the antisymmetric part of the generator by the symmetric one for quadratic functions.

In Subsection \ref{ssec:sym_decomp}, we show that $\mathcal{H}_1$ is the completion of $\S\Q\ccl{/\mathcal{N}}+\ccl{\R j_{0,1}^S}$. 
In other words, all elements of $\mathcal{H}_1$ can be approximated by $aj_{0,1}^S+\S g$ for some $a\in\R$ and $g \in \Q$. 
This is quite natural since the symmetric part of the generator preserves the degree of polynomial functions. Moreover, 
the two subspaces $\ccl{\R j_{0,1}^S}$ and $\overline{\S\Q}\ccl{/\mathcal{N}}$ are orthogonal, and we  denote their sum by 
\[  \overline{\S\Q}\ccl{/\mathcal{N}} \oplus^\perp \ccl{\R j_{0,1}^S}.\]
Nevertheless, this decomposition is not satisfactory, because we want the fluctuating term to be on the form $\cL^\m(f_k)$, 
and not $\S(f_k)$. In order to make this replacement, we need to prove  the weak {sector condition}, that gives a control 
of $\vertiii{ \A^\m g }_1$ by $\vertiii{ \S g }_1$,  when $g$ is a quadratic function. 
The argument is explained is Subsection \ref{ssec:replacement} and \ref{ssec:decomposition}, and the weak sector condition is proved 
in Appendix \ref{app:sector}.  The only trouble is that this new decomposition is no longer orthogonal, so that we can not directly express 
the diffusion coefficient as a variational formula, like \eqref{eq:diffusion}. This problem is solved in Section \ref{sec:diffusion}. 

\subsection{Decomposition according to the symmetric part\label{ssec:sym_decomp}}

 We begin this subsection with a table of calculus, very useful in the sequel. Recall that $\ll \cdot,\cdot \gg_1$ is obtained by polarization from the norm $\vertiii{\cdot}_1$ defined in Definition \ref{de:norm1}, and also that $\ll \cdot \gg_{1,\star}$ and $\ll \cdot \gg_{1,\star\star}$ have been defined in \eqref{eq:prod1}.

\bprop{\label{prop:identities}For any $\varphi \in \Q_0$ and $ g \in \Q$ (which implies $\S g \in \Q_0$ from Proposition \ref{prop:propertiesC0}), \ccl{we have}
\begin{align}
\ll \varphi,\S g  \gg_{1} &=-\ll \varphi, g\gg_{1,\star} \label{eq:id1}\\
 \ll \varphi, j_{0,1}^S \gg_1 & = -\ll \varphi \gg_{1,\star\star}\label{eq:id2}\\
\ll j_{0,1}^S,\S g\gg_1 & =0\label{eq:id3},
\end{align}
and then
\begin{align}
\vertiii{ j_{0,1}^S }^2_{1} & = - \ll j_{0,1}^S \gg_{1,\star\star} \; = \lambda \chi(1) \label{eq:id4}\\
\vertiii{\S g}_1^2 & = \frac{\lambda}{2}\E_1^\star\left[(\nabla_{0,1}\Gamma_g)^2\right]+\frac{\gamma}{2}\E_1^\star\left[(\nabla_0\Gamma_g)^2\right]\label{eq:id5}.
\end{align}}

\bprf{These identities are consequences of \ccl{Proposition \ref{prop:variance_equ}}. \ccl{Fix two local functions $\varphi \in \Q_0$ and $ g \in \Q$, we first write according to equation \eqref{eq:limH-1} and by translation invariance of $\E_1^\star $
\[\ll \varphi,\S g  \gg_{1}=-\lim_{\ell \to \infty} (2\ell)^{-1} \E_1^\star \bigg[\sum_{\vert x \vert \leqslant \ell_\varphi}\tau_x\varphi,\sum_{\vert y \vert \leqslant \ell_{\S g}}\tau_y g\bigg].\]
For any $|x|\leq \ell_{\S g}-s_\varphi-s_{ g}$, and any $y>\ell_{\S g}$,  $\tau_x\varphi$ and $\tau_y g$ have disjoint support and are therefore independent, so that we obtain as wanted 
\[\ll \varphi,\S g  \gg_{1}=-\lim_{\ell \to \infty}\Bigg\{ (2\ell)^{-1} \E_1^\star \bigg[\sum_{\vert x \vert \leqslant \ell_{\S g}-s_\varphi-s_{ g}}\tau_x\varphi,\sum_{y\in \Z }\tau_y g\bigg]+ \mathcal O(\ell^{-1})\Bigg\}=-\ll \varphi, g\gg_{1,\star},\]
where $\mathcal{O}(\ell^{-1})$ denotes a small correction term of order $\ell^{-1}$, which proves \eqref{eq:id1}.

The second identity is proved in the same way, using the elementary identity 
\[\sum_{-\ell\leq x\leq \ell-1 }j^S_{x,x+1}=\lambda \S_{\Lambda_\ell} \left(\sum_{|x|\leq \ell} x\omega^2_x\right).\]
We can then write $\ll \varphi,j_{0,1}^S  \gg_{1}$ as the limit as $\ell\to\infty$ of 
\[- (2\ell)^{-1} \E_1^\star \bigg[\sum_{\vert x \vert \leqslant \ell_{\varphi}}\tau_x\varphi,\sum_{y\in \Lambda_\ell}y \omega_y^2\bigg]+(2\ell)^{-1} \E_1^\star \bigg[j_{-\ell, 1-\ell}^S,(-\S_{\Lambda_\ell })^{-1}\sum_{\vert x \vert \leqslant \ell_{\varphi}}\tau_x\varphi \bigg].\]
Similarly to the proof of \eqref{eq:id1}, the first term above converges as $\ell\to\infty$ to $-\ll \varphi \gg_{1,\star\star}$. Therefore we only need to prove that the second term vanishes. To do so, we apply \eqref{eq:crossproduct} to $f:=\ell^{-1/4}(-\S_{\Lambda_\ell })^{-1}j_{-\ell,1-\ell}^S$ and $\widetilde{\varphi}=\ell^{1/4}\varphi$, to obtain
\begin{multline}
(2\ell)^{-1} \E_1^\star \bigg[j_{-\ell, 1-\ell}^S,(-\S_{\Lambda_\ell })^{-1}\sum_{\vert x \vert \leqslant \ell_{\varphi}}\tau_x\varphi \bigg]\\
\leq\frac{1}{\sqrt{\ell}} \left\{\frac{1}{16\ell}\E_1^\star\bigg[\sum_{\vert x \vert \leqslant \ell_{\varphi}}\tau_x\varphi, (-\S_{\Lambda_\ell})^{-1}\,\sum_{\vert x \vert \leqslant \ell_{\varphi}}\tau_x\varphi\bigg]+ \E_1^\star\Big[ j_{-\ell,1-\ell}^S(-\S_{\Lambda_\ell})^{-1}j_{-\ell,1-\ell}^S\Big]\right\}.
\end{multline}
The first term inside the braces above converges as $\ell\to\infty$, according to Proposition \ref{prop:variance_equ}, to $\vertiii{\varphi}_1/8$. Furthermore, using the variational formula for the second term, it is straightforward to prove that it is bounded from above by a constant as well. The right hand side above is therefore of order $\ell^{-1/2}$ and vanishes as wanted as $\ell\to\infty$.

Identity \eqref{eq:id3} is immediate given that $\ll j_{0,1}^S, g\gg_{1,\star} =0$ by translation invariance of $\E_1^\star $. Similarly, \eqref{eq:id4} follows from \eqref{eq:id2} and elementary computations, and  \eqref{eq:id5} follows from \eqref{eq:id1}.
}
}

\bcor{\label{corr}For all $a\in\R$ and $g\in\Q$, \begin{equation*}
\vertiii{ aj_{0,1}^S+\S g }^2_1=a^2 \lambda\chi(1)+\frac{\lambda}{2}\E_1^\star\left[(\nabla_{0,1}\Gamma_g)^2\right]+\frac{\gamma}{2}\E_1^\star\left[(\nabla_0\Gamma_g)^2\right].
\end{equation*} In particular, the variational formula for $\vertiii{\varphi }_1$, $\varphi \in \Q_0$, writes 
\begin{equation} 
\vertiii{ \varphi }^2_1=\frac{1}{\lambda\chi(1)}\ll \varphi,j_{0,1}^S\gg_1^2+\sup_{g \in \Q} \left\{2\ll \varphi,(-\S) g\gg_1- \vertiii{ \S g }^2_1\right\}.
\label{eq:variational_new}
\end{equation}}
\bprop{\label{cor:decomp} 
We denote by $\S\Q$ the space $\{\S g \ ; \ g \in \Q\}$. Then, 
\begin{equation*}
\cH_1=\overline{\S\Q}\ccl{/\mathcal{N}}\oplus^\perp\ccl{\R j_{0,1}^S}.
\end{equation*}}

\begin{proof} We divide the proof into two steps.
\paragraph{\small (a) The space is well generated --} 

The inclusion $\overline{\S\Q}\ccl{/\mathcal{N}}+\ccl{\R j_{0,1}^S} \subset \cH_1$ is obvious (and follows from Proposition \ref{prop:propertiesC0}). Moreover, from the variational formula \eqref{eq:variational_new} we know that: if $h\in\cH_1$ satisfies $\ll h,j_{0,1}^S\gg_1=0$ and $\ll h,\S g \gg_1 = 0$ for all $g \in \Q$, then $\vertiii{ h }_1=0$.

\paragraph{\small (b) The sum is orthogonal -- } 

This follows directly from the previous proposition and from the fact that: $\ll j_{0,1}^S,\S g \gg_1=0$ for all $g \in \Q$.  
\end{proof}

\subsection{Replacement of $\S$ with $\cL$ \label{ssec:replacement}}

In this subsection, we prove identities which mix the antisymmetric and the symmetric part of the generator, which will be used to get the \emph{weak} sector condition (Proposition \ref{prop:sector}).

\blem{\label{lem:AS}For all $g,h \in \Q$,\begin{equation*}
\ll \S g, \A^\m h \gg_1= - \ll \A^\m g, \S h \gg_1.
\end{equation*}}

\bprf{ This easily follows from the first identity of Proposition \ref{prop:identities} and from the invariance by translation of the measure $\P_1^\star$: \begin{align*}
\ll \S g,\A^\m h\gg_1 & = -\ll g, \A^\m h \gg_{1,\star} = -\sum_{x \in \Z} \E_1^{\star}\big[{\tau_x g, \A^\m h}\big] = \sum_{x \in \Z} \E_1^\star\big[\A^\m (\tau_x g),h\big] \\
& = \sum_{x\in\Z} \E_1^\star\big[\tau_x(\A^\m g),h\big]=\sum_{x \in \Z} \E_1^\star\big[\A^\m g,\tau_{-x}h\big]=\sum_{x \in \Z} \E_1^\star\big[\A^\m g,\tau_x h\big]\\
&=- \ll \A^\m g,\S h\gg_1.
\end{align*}
}

\blem{\label{lem:AandS}For all $g \in \Q$, 
\begin{equation*}
\ll \S g,j_{0,1}^A \gg_1=-\ll \A^\m g, j_{0,1}^S\gg_1.
\end{equation*}}

\bprf{From Proposition \ref{prop:identities}, \begin{align*}
\ll \S g,j_{0,1}^A\gg_1 &= - \ll g,j_{0,1}^A\gg_{1,\star} = -\sum_{x \in \Z} \E_1^\star\big[\tau_x g, j_{0,1}^A\big]= -\sum_{x \in \Z} \E_1^{\star}\big[g,j_{x,x+1}^A\big] \\ & =-\sum_{x \in \Z} x \E_1^\star\big[g,j_{x-1,x}^A-j_{x,x+1}^A\big]  = -\sum_{x \in \Z} x\E_1^\star\big[g, \A^\m(\omega_x^2)\big]\\ &=\sum_{x \in \Z} x \E_1^\star\big[\A^\m g,\omega_x^2\big]= \ll \A^\m g \gg_{1,\star\star}=-\ll \A^\m g,j_{0,1}^S\gg_1.
\end{align*}
}
These two lemmas together with the second identity of Proposition \ref{prop:identities} (and the fact that $\ll j_{0,1}^A \gg_{1,\star\star}=0$) imply the following:

\bcor{\label{cor:productnull}For all $a \in \R$, $g \in \Q$, \begin{equation*} \ll a j_{0,1}^S + \S g, a j_{0,1}^A + \A^\m g \gg_1 =0. \end{equation*}}
We now  state the main result of this subsection.
\bprop{[Weak sector condition]\label{prop:sector} \begin{enumerate}[(i)] \item There exist two constants $C_0:=C(\gamma,\lambda)$ and $C_1:=C(\gamma,\lambda)$ such that the following inequalities hold for all $f,g \in \Q$: 
\begin{align}
\label{eq:sector_condition}\left\vert \ll \A^\m f, \S g \gg_1\right\vert &\leqslant C_0 \vertiii{ \S f }_1\, \vertiii{ \S g }_1. \\
\left\vert \ll \A^\m f, \S g \gg_1\right\vert &\leqslant  C_1 \vertiii{ \S f }^{2}_1+ \frac{1}{2}\vertiii{ \S g }^{2}_1.
\end{align}
\label{sector_item1}
\item There exists a positive constant $C$ such that, for all $g \in \Q$, \begin{equation*} \vertiii{ \A^\m g }_1\ \leqslant C \vertiii{ \S g }_1.\end{equation*} \label{sector_item2}\end{enumerate} }

\bprf{ The proof is technical because made of explicit computations for quadratic functions. For that reason, we \ccl{give it in Appendix} \ref{app:sector}. }

\subsection{Decomposition of the Hilbert space \label{ssec:decomposition}}
We deduce from the previous two subsections the expected decomposition of $\mathcal{H}_1$.

\bprop{\label{prop:decomposition}We denote by $\cL^\m\Q$ the space $\{\cL^\m g \ ; \ g \in \Q\}$. Then, \begin{equation*}
\cH_1=\overline{\cL^\m \Q}\ccl{/\mathcal{N}} \oplus \ccl{\R j_{0,1}^S}. \end{equation*}}

\bprf{We first prove that $\cH_1$ can be written as the sum of the two subspaces. Then, we show that the sum is direct.
\paragraph{\small (a) The space is well generated -- } The inclusion $\overline{\cL^\m \Q}\ccl{/\mathcal{N}} + \ccl{\R j_{0,1}^S} \subset \cH_1$ follows from Proposition \ref{prop:propertiesC0}.  To prove the converse inclusion, let $h \in \cH_1$ so that $\ll h, j_{0,1}^S \gg_1=0$ and $ \ll h, \cL^\m g \gg_1=0$ for all $g \in \Q$.  From Proposition \ref{cor:decomp}, $h$ can be written as \begin{equation*}
h=\lim_{k\to\infty} \S g_k 
\end{equation*} for some sequence $\{g_k\} \in \Q$. More precisely, since $\ll \S g_k, \A^\m g_k \gg_1=0$ by 
Lemma \ref{lem:AS}, 
\begin{equation*}
\vertiii{ h }_1^2 = \lim_{k \to\infty} \ll \S g_k, \S g_k \gg_1=\lim_{k \to\infty} \ll \S g_k, \cL^\m g_k\gg_1.
\end{equation*}Moreover, we also have by assumption that $\ll h, \cL^\m g_k \gg_1 = 0$ for all $k$, and from Proposition \ref{prop:sector}, \begin{equation*}
\sup_{k \in \N} \vertiii{ \cL^\m g_k }_1 \leqslant (C+1) \sup_{k\in\N} \vertiii{ \S g_k }_1=:C_h
\end{equation*} is finite. Therefore, \begin{equation*}
\vertiii{ h }_1^2=\lim_{k\to\infty} \ll \S g_k, \cL^\m g_k\gg_1=\lim_{k\to\infty} \ll \S g_k-h,\cL^\m g_k \gg_1 \, \leqslant \lim_{k\to\infty} C_h \vertiii{ \S g_k -h }_1=0.
\end{equation*} 
\paragraph{\small (b) The sum is direct -- } Let $\{g_k\} \in \Q$ be a sequence  such that, for some $a \in \R$, \begin{equation*}
\lim_{k \to \infty} \cL^\m g_k=aj_{0,1}^S \quad \text{ in } \cH_1,
\end{equation*} By a similar argument, \begin{equation*}
\limsup_{k\to\infty} \ll \S g_k, \S g_k \gg_1=\limsup_{k\to\infty} \ll \cL^\m g_k,\S g_k\gg_1=\limsup_{k\to\infty} \ll \cL^\m g_k - a j_{0,1}^S, \S g_k \gg_1=0,
\end{equation*} where the last equality comes from the fact that $\ll j_{0,1}^S,\S g_k\gg_1=0$ for all $k$. On the other hand, by Proposition \ref{prop:sector}, $\vertiii{ \cL^\m g_k }_1 \leqslant (C+1) \vertiii{ \S g_k }_1 \to 0$. Then, $a=0$. This concludes the proof.  }
Recall that $j_{0,1}^S(\m,\omega)=\lambda(\omega_0^2-\omega_1^2)$. We have obtained the following result.
\blem{\label{lem:def_d}For every $g \in \Q_0$, there exists a unique constant $a \in \R$, such that \begin{equation}
g + a(\omega_1^2-\omega_0^2) \in \overline{\cL^\m \Q} \quad \text{ in } \cH_1.
\end{equation}}
This lemma states \ccl{exactly what we were expecting for:} there exists a unique number $\widetilde{D}$, and a sequence of cylinder functions $\{f_k\} \in \Q$ such that \begin{equation*}
\vertiii{ j_{0,1} + \widetilde{D}(\omega_1^2-\omega_0^2)+\cL^\m f_k }_1 \xrightarrow[k\to\infty]{}0,
\end{equation*}
and this convergence also holds with the same constant $\widetilde{D}$ and the same sequence ${f_k}$ if we replace the semi-norm $\vertiii{ \cdot }_1$ with $\vertiii{ \cdot }_\beta$ for any $\beta>0$. 

\ccl{We are left to prove: first, the identity $\widetilde{D}=D$,  and second,  the statement (ii) of Proposition \ref{prop:macro2}, namely \eqref{eq:BG_principle}, both being contained in the following section.}

\section{Diffusion coefficient and end of the proof \label{sec:diffusion}}

The main goal of this section is to express the diffusion coefficient by various variational formulas. We also prove the second statement of \ccl{Proposition \ref{prop:macro2}}. 
First, recall the we defined the coefficient $D$ in Definition \ref{def:diffusion} as
\begin{equation} 
D=\lambda+\frac{1}{\chi(1)}\inf_{f \in\Q}\sup_{g \in \Q}\left\{\ll f,-\S f \gg_{1,\star}+2\ll j_{0,1}^A-\A^{\mathbf m} f,g\gg_{1,\star}-\ll g , -\S g \gg_{1,\star}\right\}. 
\label{eq:diffusion}
\end{equation} 
From Lemma  \ref{lem:def_d}, there exists a unique $\widetilde{D}\in \R$ such that \begin{equation*}
j_{0,1} + \widetilde{D}(\omega_1^2-\omega_0^2) \in \overline{\cL^\m \Q} \quad \text{ in } \cH_1.
\end{equation*}
 We are going to obtain variational formulas for $\widetilde{D}$, and prove that $\widetilde{D}=D$, by following the argument in \cite{sasolla} \ccl{(see Remark \ref{rem:DDt} below)}. We first rewrite the decomposition of the Hilbert space given in Proposition \ref{prop:decomposition}, by replacing $j_{0,1}^S$ with $j_{0,1}$. This new statement is based on Corollary \ref{cor:productnull}, which gives an orthogonality relation. The second step is to find \ccl{another} orthogonal decomposition (see \eqref{eq:newdecomp} below), which will \ccl{justify} the variational formula \eqref{eq:diffusion} for $D$. 
Hereafter, we denote $\cL^{\m,\star}:=\S-\A^\m$  and $j_{0,1}^\star:=j_{0,1}^S-j_{0,1}^A.$

\blem{The following decompositions hold \begin{equation*}
\cH_1=\overline{\cL^\m \Q}\vert_{\mathcal N} \oplus \ccl{\R j_{0,1}}=\overline{\cL^{\m,\star} \Q}\vert_{\mathcal N} \oplus \ccl{\R j^\star_{0,1}}.
\end{equation*}}

\bprf{We only sketch the proof of the first decomposition, since it is done in \cite{sasolla}. Let us recall from Proposition \ref{prop:decomposition} 
that $\overline{\cL^\m \Q}$ has a complementary subspace in $\cH_1$ which is one-dimensional. Therefore, it is sufficient to prove that $\cH_1$ is generated by $\overline{\cL^\m \Q}$ and the total current. Let $h \in \cH_1$ such that $\ll h,j_{0,1}\gg_1=0$ and $ \ll h,\cL^\m g\gg_1=0$ for all $g \in \Q$. By Proposition \ref{cor:decomp}, $h$ can be written as \begin{equation*}
h=\lim_{k\to\infty} \S g_k  + a j_{0,1}^S
\end{equation*} for some sequence $\{g_k\} \in \Q$, and $a\in\R$, and from Corollary \ref{cor:productnull}, \begin{align*}
\vertiii{ h }_1^2 &= \lim_{k \to\infty} \ll aj_{0,1}^S + \S g_k, a j_{0,1}+\cL^\m g_k \gg_1.
\end{align*} Moreover, from Proposition \ref{prop:sector} and the standard inequality $\vertiii{\varphi + \psi}_1^2 \leqslant 2 \vertiii{\varphi}_1^2 + 2 \vertiii{\psi}_1^2$, we have \begin{equation*} 
\sup_{k\in\N} \vertiii{ aj_{0,1} + \cL^\m g_k }_1^2 \leqslant 2 a^2 \vertiii{ j_{0,1} }^2_1 + 2(C+1) \sup_{k\in\N} \vertiii{ \S g_k }_1^2 =:C_h \end{equation*} is finite.
Therefore, \begin{align*}
\vertiii{ h }_1^2 & = \lim_{k\to\infty} \ll aj_{0,1}^S+\S g_k-h,aj_{0,1}+\cL^\m g_k \gg_1\\
& \leqslant  C_h \limsup_{k\to\infty} \vertiii{ aj_{0,1}^S+\S g_k - h}_1=0.
\end{align*}
The same arguments apply to the second decomposition. 
}

We define bounded linear operators $T, T^\star:\cH_1 \to \cH_1$ as \begin{align*}
T(aj_{0,1}+\cL^\m f) & := aj_{0,1}^S + \S f, \\
T^\star(aj_{0,1}^\star + \cL^{\m,\star} f) & := aj_{0,1}^S + \S f. 
\end{align*} 
From the following identity (which is a direct consequence of Corollary \ref{cor:productnull})
\begin{equation*}
\vertiii{ aj_{0,1}+\cL^\m f}_1^2=\vertiii{aj_{0,1}^\star+\cL^{\m,\star}f}_1^2=\vertiii{ aj_{0,1}^S+\S f }_1^2 + \vertiii{ aj_{0,1}^A+\A^\m f}^2_1,
\end{equation*} we can easily see that $T^\star$ is the adjoint operator of $T$ and we also have the relations
\begin{align*}
\ll Tj_{0,1}^S, j_{0,1}^\star \gg_1& =\ll T^\star j_{0,1}^S, j_{0,1}\gg_1 = \lambda \chi(1) \\
\ll T j_{0,1}^S, \cL^{\m,\star} f\gg_1&=\ll T^\star j_{0,1}^S,\cL^\m f \gg_1=0, \text{ for all } f\in\Q.
\end{align*}
In particular, \begin{equation}
\cH_1=\overline{\cL^{\m,\star}\Q}\ccl{/\mathcal{N}} \oplus^\perp \ccl{\R Tj_{0,1}^S} \label{eq:newdecomp}
\end{equation} 
and there exists a unique number $Q$ such that \begin{equation*}
j_{0,1}^\star-QTj_{0,1}^S \in \overline{\cL^{\m,\star} \Q} \quad \text{ in } \cH_1.
\end{equation*} We are going to show that $\tilde D=\lambda Q$.

\blem{\begin{equation}
Q=\frac{\lambda \chi(1)}{ \vertiii{ Tj_{0,1}^S }_1^2}=\frac{1}{\lambda \chi(1)} \inf_{f \in \Q} \vertiii{ j_{0,1}^\star - \cL^{\m,\star} f}^2_1.
\end{equation}}

\bprf{The first identity follows from the fact that \begin{equation*}
\ll T j_{0,1}^S, j_{0,1}^\star - Q Tj_{0,1}^S\gg_1=\lambda \chi(1)- Q \vertiii{ Tj_{0,1}^S}_1^2=0.
\end{equation*}
The second identity is  straightforwardly obtained from the first identity, together with 
\begin{equation}
\inf_{f\in\Q} \vertiii{ j_{0,1}^\star - Q Tj_{0,1}^S-\cL^{\m,\star} f}^2_1=0,
\label{eq:identity_inf}
\end{equation}  
which holds by construction of $Q$.
}

Thanks to Corollary \ref{cor:productnull}, for any $g\in \cQ$, $Tg$ and $g-Tg$ are orthogonal, and therefore $\ll Tg,g\gg_1=\ll Tg,Tg \gg_1$ for all $g \in \cH_1$. 
In particular, $j_{0,1}^S-Tj_{0,1}^S$ is orthogonal to $Tj_{0,1}^S$, thus 
\begin{equation*}
j_{0,1}^S-Tj_{0,1}^S \in \overline{\cL^{\m,\star}\Q}.
\end{equation*} 
We can then obtain the following variational formula for $\vertiii{ Tj_{0,1}^S}_1$.
\bprop{\begin{equation}
\vertiii{Tj_{0,1}^S}_1^2=\inf_{f\in\Q} \vertiii{ j_{0,1}^S-\cL^{\m,\star} f}^2_1.
\end{equation}}
\bprf{With a similar argument (as in the proof of the previous proposition), we have \begin{equation*}
\inf_{f\in\Q} \vertiii{ j_{0,1}^S-Tj_{0,1}^S-\cL^{\m,\star} f }_1=0,
\end{equation*} and \begin{equation*}
\inf_{f\in\Q} \vertiii{ j_{0,1}^S-Tj_{0,1}^S-\cL^{\m,\star} f }^2_1=\inf_{f\in\Q} \vertiii{ j_{0,1}^S-\cL^{\m,\star} f }^2_1- \vertiii{ Tj_{0,1}^S }^2_1,
\end{equation*}where we used the fact that $j_{0,1}^S-Tj_{0,1}^S$ and  $\cL^{\m,\star} f$ are both orthogonal to $ Tj_{0,1}^S$, which concludes the proof. }

We are now ready to derive variational formulas for $\widetilde{D}$:
\blem{\label{lem:diff1}
\begin{equation}
\label{eq:Dtvar}
\widetilde{D}=\frac{1}{\chi(1)}\inf_{f\in\Q} \vertiii{ j_{0,1}^\star-\cL^{\m,\star} f}^2_1 = \frac{\chi(1)}{4\inf_{f\in\Q} \vertiii{ j_{0,1}^S-\cL^{\m,\star} f}^2_1}.
\end{equation}}

\bprf{By construction, $j_{0,1}-(\widetilde{D}/\lambda)j_{0,1}^S \in \overline{\cL^{\m}\Q}$ and therefore \begin{equation}
\ll j_{0,1}-\frac{\widetilde{D}}{\lambda}j_{0,1}^S,T^\star j_{0,1}^S\gg_1=\lambda\chi(1)-\frac{\widetilde{D}}{\lambda}\vertiii{ Tj_{0,1}^S }^2_1 =0.
\end{equation} 
As a result, we obtain as wanted that, $\widetilde{D}=\lambda Q$, and the variational formula for $\widetilde{D}$ can be deduced from the one for $Q$. 
}

\brem{\label{rem:DDt}
We can rewrite the variational formula \eqref{eq:Dtvar} for $\widetilde{D}$ as: 
\begin{align}
\widetilde{D}& =\frac{1}{\chi(1)}\inf_{f\in\Q} \left\{ \vertiii{ j_{0,1}^S}^2_1+\vertiii{ \S f }^2_1 + \vertiii{j_{0,1}^A-\A^\m f}^2_1 \right\} \notag \\ 
& = \lambda + \frac{1}{\chi(1)}\inf_{f\in\Q} \left\{ \vertiii{ \S f }^2_1 + \vertiii{ j_{0,1}^A-\A^\m f }^2_1 \right\} \label{eq:line2} \\
& = \lambda + \frac{1}{\chi(1)}\inf_{f\in\Q} \sup_{g \in \Q} \left\{ \vertiii{ \S f }^2_1 - 2 \ll j_{0,1}^A-\A^\m f,\S g \gg_1 - \vertiii{ \S g }^2_{1} \right\} \notag\\
& = \lambda + \frac{1}{\chi(1)} \inf_{f\in\Q} \sup_{g \in \Q} \left\{ \ll f,-\S f \gg_{1,\star} + 2 \ll j_{0,1}^A - \A^\m f, g \gg_{1,\star} - \ll g, -\S g \gg_{1,\star} \right\} \label{eq:line4} \\
& =D , \label{eq:line5}
\end{align}
by definition of the diffusion coefficient, see \eqref{eq:diffusion}.
To establish the third identity, we used \eqref{eq:variational_new} to restrict the infimum in \eqref{eq:line2}, to functions $f$ satisfying $\ll j_{0,1}^A - \A^\m f,j_{0,1}^S \gg_1=0$.
}
We are now in position to prove the remaining statement of \ccl{Proposition \ref{prop:macro2}}:
\bprop{\label{prop:limit} \ccl{There exists a sequence $\{f_k\} \in \Q$ such that \begin{equation}
\label{eq:idhilb}
\lim_{k\to\infty} \vertiii{ j_{0,1} + D(\omega_1^2-\omega_0^2)+\cL^\m f_k }_1 =
\lim_{k\to\infty} \vertiii{ j^\star_{0,1} + D(\omega_1^2-\omega_0^2)+\cL^{\m,\star} f_k }_1 =0
,\end{equation}
and it satisfies}
\begin{equation*}
\lim_{k\to\infty} \E_1^\star\Big[ \lambda\Big(\nabla_{0,1}(\omega_0^2-\Gamma_{f_k})\Big)^2 +\gamma \Big(\nabla_{0}(\Gamma_{f_k})\Big)^2\Big]= 2D\chi(1).
\end{equation*} }
\bprf{
\ccl{The first statement is a consequence of Lemma \ref{lem:def_d}, the definition of $\widetilde{D}$,  Remark \ref{rem:DDt} and Corollary \ref{cor:productnull}. We now turn to the second statement.} By assumption,
\begin{equation*}
\lim_{k\to\infty} \vertiii{ T \big( j_{0,1} + D (\omega_1^2-\omega_0^2) + \cL^{\m} f_k \big) }_1=0
\end{equation*}
and therefore \begin{equation*}
\lim_{k\to\infty}\vertiii{ j_{0,1}^S+\S f_k}_1^2=D^2\vertiii{ T(\omega_1^2-\omega_0^2)}_1^2.
\end{equation*}
Then, the result follows from 
\begin{equation}
\label{eq:eqDTJs}
D=\lambda Q = \frac{\chi(1)}{\vertiii{ T(\omega_1^2-\omega_0^2)}_1^2}
\end{equation} 
and Corollary \ref{corr}, which yields 
\begin{equation}
\vertiii{ j_{0,1}^S + \S f_k }_1^2 = \frac{\lambda}{2}\E_1^\star\Big[ \Big(\omega_1^2-\omega_0^2-\nabla_{0,1}(\Gamma_{f_k})\Big)^2\Big] +\frac{\gamma}{2}\E_1^\star\Big[ \Big(\nabla_{0}(\Gamma_{f_k})\Big)^2\Big].
\end{equation}
}

\section{Green-Kubo formulas}\label{sec:gk}
In this  section, \ccl{we prove the Green-Kubo formula given in Theorem \ref{theo:GKF}, which is a consequence of Propositions \ref{prop:conv_GK} and \ref{prop:equivalence} below.} We first prove the convergence of the infinite volume Green-Kubo formula, then we rigorously show that it is equivalent to the diffusion coefficient given by Varadhan's approach.
For the sake of clarity, in the following we simplify notations, and we denote $\ll \cdot \gg_{1,\star}$ by $\ll \cdot \gg_\star$.

\subsection{Convergence of Green-Kubo formula}\label{ssec:conv}
Linear response theory predicts that the diffusion coefficient  is given by the {Green-Kubo formula}. In \cite[Section 3]{MR2448630} its homogenized infinite volume version  is given by: 
 \begin{equation}\overline \kappa(z)=\lambda+\frac{1}{2} \int_0^{+\infty} \text dt \ e^{-z t} \sum_{x \in \Z} \ccl{\mathbb Q_{\mu_1^N}}\Big[j_{0,1}^A(\m,t), \tau_xj_{0,1}^A(\m,0)\Big]. \label{eq:gkkk}\end{equation} 
 That formula can be guessed from the better-known finite volume Green-Kubo formula thanks to the ergodicity property of the disorder measure $\P$.
We denote by $L(z)$ the second term of the right hand side of \eqref{eq:gkkk}, that is \begin{equation*} L(z):= \frac{1}{2} \int_0^{+\infty} \text  dt e^{-z t}\   \sum_{x \in \Z} \ccl{\mathbb Q_{\mu_1^N}}\Big[j_{0,1}^A(\m,t),\tau_xj_{0,1}^A(\m,0)\Big].\end{equation*} 
We also denote by $\mathbf L^2_\star$ the Hilbert space generated by the elements of $\cC$ (recall Definition \ref{de:C}) and the inner product $\ll \cdot \gg_{\star}$.  
We define $h_z:=h_z(\m,\omega)$ as the solution to the resolvent equation in $\mathbf{L}^2_\star$ \begin{equation} (z-\cL^\m)h_z=j_{0,1}^A. \label{eq:resolvent2}\end{equation}
Hille-Yosida Theorem (see Proposition 2.1 in \cite{MR838085} for instance) implies that the Laplace transform $L(z)$ is well defined, is smooth on $(0,+\infty)$, 
and such that
\begin{equation}
\overline\kappa(z)=\lambda + L(z)=\lambda + \frac12 \ll j_{0,1}^A, h_z \gg_\star.
\label{eq:homogenizedGK}
\end{equation}
 Since the generator $\cL^\m$ conserves the degree of homogeneous polynomial functions, the solution to the resolvent equation is on the form 
\[h_z(\m,\omega)=\sum_{x \in \Z}\varphi_z(\m,x,x)(\omega_{x+1}^2-\omega_x^2)+\sum_{\substack{x,y\in\Z\\x\neq y}} \varphi_z(\m,x,y)\omega_x\omega_y,\] where, for all $\m \in\ccl{\Omega^{\cD}}$, the function $\varphi_z(\m,\cdot,\cdot):\Z^2\to\R$ is square-summable and symmetric.
%
%
In the Hilbert space $\mathbf{L}^2_\star$, there exist equations involving the symmetric part $\S$ that can be explicitly solved:
\begin{lem}\label{lem:image}
There exists $f \in \mathbf{L}^2_\star$ such that $\S f=j_{0,1}^A$ in $\mathbf{L}^2_\star$.
\end{lem}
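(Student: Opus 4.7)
\emph{Plan.} The idea is to solve $\S(J)=j_{0,1}^A$ by a resolvent argument, exploiting the spectral gap that the flip noise creates on the subspace of off-diagonal quadratic polynomials in $\omega$. Since $j_{0,1}^A=\frac{2}{\sqrt{m_0 m_1}}\omega_0\omega_1$ is a quadratic polynomial with no $\omega_z^2$ term, and $\cL^\m$ (hence $\S$) preserves both the degree and the off-diagonal character of such polynomials, it is natural to look for $J$ inside the closed subspace $\mathbf{L}^2_{\star,\flat}\subset \mathbf{L}^2_\star$ generated by off-diagonal quadratic cylinder functions of the form $\sum c_{x,y}(\m)\omega_x\omega_y$ with $x\neq y$.

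The key observation is that on any monomial $\omega_x\omega_y$ with $x\neq y$ only the flips at sites $x$ and $y$ act nontrivially, each contributing $-2\omega_x\omega_y$, so that $\S^{\text{flip}}f=-4f$ for every $f \in \mathbf{L}^2_{\star,\flat}$. On the other hand $\S^{\text{exch}}$ preserves $\mathbf{L}^2_{\star,\flat}$ (it just permutes pairs of distinct indices, never creating an $\omega_z^2$ term), and is nonpositive and self-adjoint in $\langle\cdot,\cdot\rangle_1$; since it commutes with translations, the same is true in $\ll\cdot,\cdot\gg_\star$, as follows from the representation $\ll f,f\gg_\star=\lim_{\Lambda\uparrow\Z}|\Lambda|^{-1}\E_1^\star[(\sum_{x\in\Lambda}\tau_x f)^2]$. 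Adding the two contributions yields the spectral gap
\[
\ll -\S f,\, f\gg_\star\;\geqslant\; 4\gamma\ll f,f\gg_\star,\qquad f\in \mathbf{L}^2_{\star,\flat}.
\]

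A direct computation using the Wick isometry (only the shift $z=0$ contributes in the defining series) gives $\ll j_{0,1}^A,j_{0,1}^A\gg_\star = 4\,\E\big[(m_0m_1)^{-1}\big]\leqslant 4C^2$ thanks to the mass bound \eqref{eq:bb}, so $j_{0,1}^A\in\mathbf{L}^2_{\star,\flat}$. The Lax--Milgram theorem applied to the continuous coercive bilinear form $(f,g)\mapsto\ll -\S f,g\gg_\star$ on $\mathbf{L}^2_{\star,\flat}$ then produces a unique $J\in\mathbf{L}^2_{\star,\flat}\subset\mathbf{L}^2_\star$ such that $\S J = j_{0,1}^A$ in $\mathbf{L}^2_\star$, with $\ll J,J\gg_\star^{1/2}\leqslant(4\gamma)^{-1}\ll j_{0,1}^A,j_{0,1}^A\gg_\star^{1/2}$.

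The main point of the argument, more conceptual than technical, is that the gap has to be established for the inner product $\ll\cdot,\cdot\gg_\star$ and not merely for the more elementary one in $\mathbf{L}^2(\mu_1)$. This is precisely where the identity $\S^{\text{flip}}f=-4f$ on the off-diagonal quadratic sector, combined with the self-adjointness and translation invariance of $\S^{\text{exch}}$, play the essential role; once this is in hand, the existence of $J$ is immediate from the bounded invertibility of $-\S$ on $\mathbf{L}^2_{\star,\flat}$.
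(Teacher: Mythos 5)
Your argument is correct, but it takes a genuinely different route from the paper. The paper proceeds by brute force: it writes $J=\sum_{x,k\geqslant 1}\varphi_k(\m,x)\,\omega_x\omega_{x+k}$, derives the linear system that the coefficients must satisfy, takes the Fourier transform in $x$, solves the resulting second-order recurrence in $k$ explicitly, and checks square-summability — so it produces a closed formula for $J$. You instead exploit the algebraic fact that on the sector spanned by off-diagonal quadratic functions the flip part acts as $\S^{\text{flip}}=-4\,\mathrm{Id}$, while $\S^{\text{exch}}$ preserves this sector and is nonpositive for $\ll\cdot,\cdot\gg_\star$ (by translation invariance and the block-average representation of the $\star$-product), giving the coercivity $\ll -\S f,f\gg_\star\geqslant 4\gamma\ll f,f\gg_\star$; Lax--Milgram then yields existence, uniqueness in that sector, and the quantitative bound $\ll J,J\gg_\star^{1/2}\leqslant (4\gamma)^{-1}\ll j_{0,1}^A,j_{0,1}^A\gg_\star^{1/2}$, none of which the explicit computation makes visible. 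What your route does not give is the explicit solution, and it requires one functional-analytic point you assert but do not verify: the continuity of the form $(f,g)\mapsto\ll -\S f,g\gg_\star$ on the closure of the off-diagonal quadratic sector. This is true, but only because you have restricted to the degree-two sector: in the Hermite (occupation-number) representation of Appendix \ref{sec:hermite}, a two-particle configuration is affected by at most four exchange bonds, so $\S^{\text{exch}}$ is a bounded operator there, and since it commutes with translations the bound transfers to the $\star$-inner product through the same block-average representation you invoke for nonpositivity (on the full space $\S^{\text{exch}}$ is unbounded, so this restriction is essential and worth stating). With that remark added, and the observation that $\S$ maps the off-diagonal quadratic sector into itself so that the weak identity from Lax--Milgram upgrades to $\S J=j_{0,1}^A$ in $\mathbf{L}^2_\star$, your proof is complete and arguably shorter than the paper's; the paper's computation, on the other hand, avoids all completion issues by exhibiting $J$ explicitly.
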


\begin{proof}
We look at the solutions $f$ to $\S f=j_{0,1}^A$ on the form 
\[
f(\m,\omega)=\sum_{\substack{x \in \Z\\ k \geq 1}} \varphi_k(\m,x)\omega_x\omega_{x+k},
\] 
such that, for all $\m \in \ccl{\Omega^{\cD}}$, 
\begin{equation}
\sum_{\substack{x \in \Z\\ k \geq 1}} |\varphi_k(\m,x)|^2 < +\infty. 
\label{eq:bound}
\end{equation} 
To simplify notations, let us erase the dependence of $\m$ for a while, and keep it in mind. Then, the sequence $\{\varphi_k(x) ; x \in \Z, k \geq 1\}$ has to be solution to
\begin{equation}
\label{eq:systphi}\begin{cases}
&-\lambda \big(\varphi_{k+1}(x)+\varphi_{k+1}(x-1)\big) + 4(\lambda+\gamma)\varphi_k(x)-\lambda\big(\varphi_{k-1}(x)+\varphi_{k-1}(x-1)\big)=0,\;  k \geq 2,\\
&(\lambda+2\gamma)\varphi_1(x) - \lambda \big(\varphi_2(x)+\varphi_2(x-1)\big)  = \displaystyle \frac{\delta_0(x)}{\sqrt{m_0m_1}},
\end{cases}
\end{equation}
for $x \in \Z$.

We introduce the Fourier transform $\widehat{\varphi}_k$ defined on $\T$ as follows:
\[ \widehat{\varphi}_k(\xi):= \sum_{x \in \Z} \varphi_k(x) e^{2i\pi x\xi}, \qquad \xi \in \T.\]
From \eqref{eq:bound}, this is well defined, and  the inverse Fourier transform together with the Plancherel-Parseval relation imply: 
\[\begin{cases}
&\varphi_k(x) =\displaystyle \int_\T \widehat{\varphi}_k(\xi) e^{-2 i \pi x \xi} \text d\xi \\
 &\displaystyle \sum_{\substack{x \in \Z\\ k \geq 1}} |\varphi_k(x)|^2 = \sum_{k\geqslant 1} \int_\T |\widehat{\varphi}_k(\xi)|^2 \text d\xi.
\end{cases}\]
The system of equations \eqref{eq:systphi} rewrites as 
\begin{equation}
\label{eq:systphihat}\begin{cases}
&-\lambda \big(e^{2i\pi\xi}+1\big)\widehat{\varphi}_{k+1}(\xi) + 4(\lambda+\gamma)\widehat{\varphi}_k(\xi)-\lambda \big(e^{-2i\pi\xi}+1\big)\widehat{\varphi}_{k-1}(\xi)=0, \text{ for } k \geq 2, \xi \in \T,\\
&(\lambda+2\gamma)\widehat{\varphi}_1(\xi) - \lambda  \big(e^{2i\pi\xi}+1\big)\widehat{\varphi}_{2}(\xi)  = \displaystyle \frac{1}{\sqrt{m_0m_1}}, \quad \text{ for } \xi \in \T.
\end{cases}
\end{equation}
Therefore, for any $\xi \in \T$ fixed, $\xi \neq 1/2$, the sequence $\{\widehat{\varphi}_k(\xi)\}_{k \geqslant 1}$ is solution to the second order linear recurrence relation:
\begin{equation}
\widehat{\varphi}_{k+1}(\xi) - \frac{2\alpha e^{-i\pi\xi}}{\cos(\pi\xi)}\widehat{\varphi}_k(\xi)+e^{-2i\pi\xi}\widehat{\varphi}_{k-1}(\xi) = 0, \quad \text{ for } k \geq 2,
\label{eq:recurs}
\end{equation}
where $\alpha:=(\lambda+\gamma)/\lambda$, with the two conditions: 
\[\begin{cases}
& (\lambda+2\gamma)\widehat{\varphi}_1(\xi) - \lambda  \big(e^{2i\pi\xi}+1\big)\widehat{\varphi}_{2}(\xi)  = \theta(\m), \quad \text{ for } \xi \in \T,\\
& \displaystyle \sum_{k\geqslant 1} \int_\T |\widehat{\varphi}_k(\xi)|^2 \text d\xi < + \infty,
\end{cases}
\]
where $\theta(\m):=1/\sqrt{m_0m_1}$. This system is explicitely solvable, and one can easily check that the following function is solution: 
\[ \widehat{\varphi}_k(\xi)=\frac{\theta(\m)}{\gamma r(\xi)+\lambda\big(1+e^{-2i\pi\xi}\big)}  \big(r(\xi)\big)^{k-1},\] 
where
\[r(\xi):=\frac{\alpha e^{-i\pi\xi}}{\cos(\pi\xi)} \left(1-\sqrt{1-\alpha^{-2}\cos^2(\pi\xi)}\right).\]
Note that $r(\cdot)$ is continuous on $\T$ (from a direct Taylor expansion), and $\varphi_k(x)$ can  then be written as the inverse Fourier transform of $\widehat{\varphi}_k(\xi)$.
\end{proof}

We are now able to prove the existence of the Green-Kubo formula:
\begin{prop}
 \label{prop:conv_GK} The following limit
\begin{equation} \overline{D}:= \lim_{\ccl{z \downarrow 0}} \overline\kappa(z) \label{eq:D2}\end{equation} exists, and is finite.
\end{prop}

\begin{proof}
  We investigate the existence of the  limit
\begin{equation} \lim_{\ccl{z \downarrow 0}}  \ll j_{0,1}^A,(z-\cL^\m)^{-1}j_{0,1}^A\gg_{\star}= 2 \lim_{\ccl{z \downarrow 0}}  L(z). \label{eq:theo}\end{equation}
With the notations above, we have to prove that $ \ll h_z,j_{0,1}^A \gg_{\star}$ converges as $z$ goes to 0, and that the limit is finite and non-negative. Then, from \eqref{eq:homogenizedGK} it will follow that $\overline D\geqslant \lambda>0$  and $\overline D$ is positive. We denote by $\Vert \cdot \Vert_{\mathbb{1} \star}$ the semi-norm corresponding to the symmetric part of the generator: \[\Vert f \Vert_{{\mathbb{1} \star}}^2=\ll f,(-\S)f\gg_{\star}\] and $\mathbb{H}_{\mathbb{1} \star}$ is the Hilbert space obtained by the completion of $\cC$ w.r.t.~that semi-norm. The corresponding dual norm is defined as 
\begin{equation}
\label{eq:dual}
\Vert f \Vert_{-{\mathbb{1} \star}}^2:=\sup_{g \in \cC} \big\{ 2 \ll f,g\gg_\star - \Vert g \Vert^2_{\mathbb{1} \star} \big\}.
\end{equation}
We denote by $\mathbb{H}_{-\mathbb{1}\star}$ the Hilbert space obtained by the completion of $ \cC$ w.r.t.~that norm. We already know from the previous sections that $\cQ_0 \subset \mathbb{H}_{-\mathbb{1}\star}$ (and we recover the result of Lemma \ref{lem:image}, namely $j_{0,1}^A \in \mathbb{H}_{-\mathbb{1}\star}$).

We are going to prove the existence of the Green-Kubo formula by using some arguments given in \cite[Section 2.6]{MR2952852}. 
For the reader's convenience, we recall here the main steps, and refer to \cite{MR2952852} for the technical details of the proof. 
First, we take the inner product $\ll \cdot, \cdot \gg_{\star}$ of \eqref{eq:resolvent2} and $h_z$ to obtain
\begin{equation} \label{eq:eq}z \ll h_z,h_z\gg_{\star}+ \Vert h_z\Vert_{\mathbb{1} \star}^2  =\ll h_z,j_{0,1}^A\gg_{\star}.\end{equation}
%
Since $j_{0,1}^A \in \mathbb{H}_{-\mathbb{1}\star}$,  the Cauchy-Schwarz inequality for the scalar product $\ll  \cdot\gg_{\star}$ gives
\begin{equation*}
z \ll h_z,h_z\gg_{\star}+ \Vert h_z\Vert_{\mathbb{1} \star}^2  \leqslant \Vert h_z\Vert_{\mathbb{1}\star} \; \Vert j_{0,1}^A\Vert_{-\mathbb{1}\star}
\end{equation*}
and we obtain that 
 \[\Vert h_z\Vert_{\mathbb{1} \star} \leqslant \Vert j_{0,1}^A\Vert_{-\mathbb{1}\star}.\] 
The family $\{h_z\}_{z>0}$ is therefore bounded in $\mathbb{H}_{\mathbb{1} \star}$, and one can extract a weakly converging subsequence in $\mathbb{H}_{\mathbb{1} \star}$. We continue to denote this subsequence by $\{h_z\}$ and we denote by $h_0$ the limit. We also have 
\[ z \ll h_z, h_z \gg_{\star} \leqslant \Vert j_{0,1}^A\Vert_{-\mathbb{1}\star}^2,\] 
and then $\{zh_z\} 
$ strongly converges to 0 in $\mathbf{L}_\star^2$.
We now invoke the weak sector condition given in Proposition \ref{prop:sector}: there exists $C_0>0$ such that, for any homogeneous polynomials of degree two $f,g \in \mathbb{H}_{\mathbb{1}\star}$,
\begin{equation} \big|\ll f, \cL^\m g \gg_\star\big| \leqslant (C_0+1) \Vert f \Vert_{\mathbb{1} \star} \ \Vert g \Vert_{\mathbb{1} \star}\; . \label{eq:sector2} \end{equation}
Indeed, this is a consequence of \eqref{eq:sector_condition}, since
\[
\big|\ll f, \A^\m g \gg_\star\big|  = \big|\ll \S f, \A^\m g \gg_1\big|  \leqslant C_0 \ \vertiii{\S f}_1 \ \vertiii{\S g}_1 = C_0 \Vert f \Vert_{\mathbb{1} \star} \ \Vert g\Vert_{\mathbb{1} \star},\]
and we also have from the Cauchy-Schwarz inquality,
\[\big|\ll f, \S g \gg_\star\big| \leqslant \Vert f \Vert_{\mathbb{1} \star} \ \Vert g \Vert_{\mathbb{1} \star}\; .
\]
The estimate given in \eqref{eq:sector2} applied to $g=h_z$ yields  
\begin{equation} 
\label{eq:h1} 
\Vert \cL^\m h_z\Vert_{-\mathbb{1} \star} = \sup_{f\in\cC} \big\{ 2 \ll f, \cL^\m h_z \gg_\star - \Vert f \Vert_{\mathbb{1}\star}^2\big\} \leqslant (C_0+1) \Vert h_z \Vert^2_{\mathbb{1} \star} \leqslant (C_0+1) \Vert j_{0,1}^A\Vert^2_{-\mathbb{1}\star}.
\end{equation}
%
From \eqref{eq:resolvent2} and \eqref{eq:h1} we deduce that \[\sup_{z > 0} \Vert zh_z\Vert_{-\mathbb{1} \star} < \infty.\] 
Let us know refer to \cite[Section 2.6, Lemma 2.16]{MR2952852}: the condition \eqref{eq:h1} is sufficient to prove that \begin{itemize}
\item the sequence $\{(-\cL^\m) h_z\}$ weakly converges to $j_{0,1}^A$ in $\mathbb{H}_{-\mathbb{1}\star}$\; ;
\item the following identity holds 
\begin{equation}\label{eq:limit}
\ll h_0, (-\cL^\m)h_0 \gg_\star=\ll h_0, (-\S)h_0 \gg_\star = \ll h_0, j_{0,1}^A\gg_\star\; ;
\end{equation}
\item  the sequence $\{h_z\}$ strongly converges to $h_0$ in $\mathbb{H}_{\mathbb{1} \star}$, and the limit is unique.
\end{itemize}

  We have proved the first part: the limit \eqref{eq:theo} exists. To obtain its finiteness, we are going to give an upper bound, using the following variational formula: \[ \ll j_{0,1}^A, (z-\cL^\m)^{-1} j_{0,1}^A \gg_{\star}\, = \sup_{f\in\cC} \left\{ 2 \ll f, j_{0,1}^A \gg_{\star} - \Vert f \Vert^2_{1,z} - \Vert \A^\m f \Vert^2_{-1,z} \right\},\] where the two  norms $\Vert \cdot \Vert_{\pm 1,z}$ are defined by \[ \Vert f \Vert^2_{\pm 1,z} = \ll f, (z-\S)^{\pm 1} f\gg_{\star}.\] 
For the upper bound, we neglect the term coming from the antisymmetric part $\A^\m f$, which gives 
\[ \ll j_{0,1}^A, (z-\cL^\m)^{-1} j_{0,1}^A \gg_{\star} \, \leqslant\,  \ll j_{0,1}^A, (z-\S)^{-1} j_{0,1}^A \gg_{\star}. \] In the right hand side we can also neglect the part coming from the exchange symmetric part $\S^{\text{exch}}$, and remind that $\S^{\text{flip}}(j_{0,1}^A)=-2j_{0,1}^A$. This gives an explicit finite upper bound.  Then, we have from \eqref{eq:limit} that 
\[\lim_{\ccl{z \downarrow 0}} \ll j_{0,1}^A, (z-\cL^\m)^{-1} j_{0,1}^A \gg_{\star}\, = \ll j_{0,1}^A,h_0\gg_{\star}\, = \ll h_0,(-\S)h_0 \gg_{\star}\,  \geqslant 0,\] and the positiveness is proved.  \end{proof}

\subsection{Equivalence of the definitions}

In this subsection we rigorously prove the equality between the variational formula for the diffusion coefficient and the Green-Kubo formula.

\begin{prop}\label{prop:equivalence}
For every $\lambda >0$ and $\gamma>0$, 
\[ \overline D=\lambda+\frac{1}{2}\lim_{\substack{z\to 0\\z>0}} \ll j_{0,1}^A,(z-\cL^\m)^{-1}j_{0,1}^A\gg_{\star} \]
coincides with the coefficient $\widetilde{D}=D$ defined  in Lemma \ref{lem:diff1}.
\end{prop}

\begin{proof}
\ccl{We obtained in Section \ref{sec:diffusion} several equivalent expressions for the diffusion coefficient}. For instance, since $\chi(1)=2$, \ccl{\eqref{eq:eqDTJs} rewrites}
\[ D=  \frac{2}{\vvvert T(\omega_1^2-\omega_0^2)\vvvert_1^2}.
\]
Furthermore, according to \eqref{eq:idhilb}, there exists a sequence $\{f_\varepsilon\}_{\varepsilon >0}$ of functions in $\cQ$ such that 
\[ g_\varepsilon:=j_{0,1}^\star + D(\omega_1^2-\omega_0^2)+\cL^{\m,\star}f_\varepsilon \] 
satisfies $\vvvert g_\varepsilon \vvvert_1 \to 0$ as $\varepsilon$ goes to 0. Observe that $g_\varepsilon \in \cQ_0 \subset \mathbb{H}_{-\mathbb{1}\star}$ from Proposition \ref{prop:propertiesC0}. By substitution in the equality above, we get
\[D^{-1}=\frac{1}{2D^2} \ll  g_\varepsilon -j_{0,1}^\star - \cL^{\m,\star} f_\varepsilon, T^\star( g_\varepsilon -j_{0,1}^\star -\cL^{\m,\star} f_\varepsilon) \gg_1 \] 
recalling that $\ll Tg,Tg\gg_1=\ll g, T^\star g \gg_1$ for all $g \in \cH_1$. Therefore,
\begin{align*}
D & = \frac{1}{2} \ll g_\varepsilon -j_{0,1}^\star - \cL^{\m,\star} f_\varepsilon ,  g_\varepsilon  -j_{0,1}^S -\S f_\varepsilon  \gg_1 \\
& = \frac{1}{2} \ll j_{0,1}^\star +\cL^{\m,\star} f_\varepsilon, j_{0,1}^S+\S f_\varepsilon \gg_1 + R_\varepsilon
\end{align*}
where $R_\varepsilon$  is bounded by $C \vvvert g_\varepsilon \vvvert_1^2$, and then vanishes as $\varepsilon$ goes to 0. Finally, from Proposition \ref{prop:identities}, we can write 
\[ D=\lambda + \frac{1}{2} \lim_{\varepsilon \to 0} \ll f_\varepsilon, (-\S) f_\varepsilon \gg_{\star}\; ,\]
and we know that the limit above exists, which implies that $\big| \ll f_\varepsilon, (-\S) f_\varepsilon \gg_{\star}\big|$ is uniformly bounded in $\varepsilon$ by a constant $C>0$. The problem is now reduced to prove that 
\begin{equation}
\label{eq:equallimits}
\lim_{\varepsilon \to 0} \ll f_\varepsilon, (-\S) f_\varepsilon \gg_{\star}\, = \lim_{\substack{z\to 0\\z>0}} \ll j_{0,1}^A,(z-\cL^\m)^{-1}j_{0,1}^A\gg_{\star}.
\end{equation}
For every $z>0$ and $\varepsilon >0$, we have by definition above and \eqref{eq:resolvent2}, 
\begin{align}
j_{0,1}^A & = z h_z - \cL^\m h_z \label{eq:limits1}\\
j_{0,1}^\star & =  g_\varepsilon- D(\omega_1^2-\omega_0^2)  - \cL^{\m,\star} f_\varepsilon  . \label{eq:limits2}
\end{align}
First, we take the inner product $\ll \cdot , \cdot \gg_{\star}$ of \eqref{eq:limits2} with $f_\varepsilon$ (recall that $\ll j_{0,1}^S,f_\varepsilon \gg_\star =- \ll j_{0,1}^S , \S f_\varepsilon \gg_1 = 0$), to get 
\[
\ll j_{0,1}^A, f_\varepsilon\gg_{\star}\, = - \ll f_\varepsilon,g_\varepsilon\gg_{\star} - \ll f_\varepsilon,(-\S) f_\varepsilon \gg_{\star} \]
and using \eqref{eq:limits1}, 
\[
-\ll \cL^\m h_z,f_\varepsilon \gg_{\star} + z \ll h_z, f_\varepsilon \gg_{\star}\, = - \ll f_\varepsilon,g_\varepsilon\gg_{\star} - \ll f_\varepsilon,(-\S) f_\varepsilon \gg_{\star}. 
\]
First, let $z$ go to 0, and observe that the limit of $\ll  \cL^\m h_z,f_\varepsilon \gg_{\star}$ exists since $\{\cL^\m h_z\}$ weakly converges in $\mathbb{H}_{-\mathbb{1}\star}$ and $f_\varepsilon \in \mathbb{H}_{\mathbb{1}\star}$. Let us take the limit as $\varepsilon$ goes to 0, and write 
\begin{align*}  \ll f_\varepsilon,g_\varepsilon\gg_{\star} \ \leqslant \ \Vert f_\varepsilon \Vert_{\mathbb{1}\star} \; \Vert g_\varepsilon \Vert_{-\mathbb{1}\star} \leqslant  C \vvvert g_\varepsilon \vvvert_1 \xrightarrow[\varepsilon \to 0]{} 0.\end{align*}
 The first equality is justified by the fact that $g_\varepsilon$ belongs to $\cQ_0 \subset \mathbb{H}_{-\mathbb{1}\star}$, and the last inequality comes from the definition of the semi-norm $\vvvert \cdot \vvvert_1$ given in \eqref{eq:norm1}. As a consequence, we have obtained 
\[\lim_{\varepsilon \to 0} \ll f_\varepsilon, (-\S) f_\varepsilon \gg_{\star}\, = \lim_{\varepsilon \to 0} \lim_{z \to 0} \ll \cL^\m h_z, f_\varepsilon \gg_{\star}.\] 
In the same way,  take the inner product $\ll \cdot, \cdot \gg_{\star}$ of \eqref{eq:limits2} with $h_z$ to obtain
\[ \ll j_{0,1}^A,h_z \gg_{\star}\, =-\ll g_\varepsilon, h_z \gg_{\star} + \ll \cL^{\m,\star} f_{\varepsilon}, h_z \gg_{\star}.\]
If we send first $z$ to 0, then $\ll g_\varepsilon, h_z \gg_{\star} $ converges to $\ll g_\varepsilon, h_0\gg_{\star}$ from the weak convergence of $\{h_z\}$ in $\mathbb{H}_{\mathbb{1}\star}$ and since $g_\varepsilon \in \mathbb{H}_{-\mathbb{1}\star}$. As before, we write \[ \ll g_\varepsilon, h_0\gg_{\star} \leqslant C \vvvert g_\varepsilon \vvvert_1 \xrightarrow[\varepsilon \to 0]{} 0.\]
Therefore,  \begin{align*}
\lim_{z\to 0} \ll j_{0,1}^A, h_z \gg_{\star}  = \lim_{\varepsilon \to 0} \lim_{z \to 0} \ll\cL^{\m,\star} f_\varepsilon, h_z \gg_{\star} =\lim_{\varepsilon \to 0} \ll f_\varepsilon, (-\S) f_\varepsilon \gg_{\star}
\end{align*}
and the claim is proved.
\end{proof}

\section{The anharmonic chain perturbed by a diffusive noise} \label{sec:anharmonic}

In this section we say a few words about the anharmonic chain, when the interactions between atoms are non-linear,  given by a potential $V$. As in \cite{sasolla}, we assume that the function $V: \R \to \R_+$ satisfies the following properties: \begin{enumerate}[(i)]
\item $V(\cdot)$ is a smooth symmetric function,
\item there exist $\delta_-$ and $\delta_+$ such that $0<\delta_-\leqslant V''(\cdot) \leqslant \delta_+ < +\infty$,
\item $\delta_-/\delta_+ > (3/4)^{1/16}$.
\end{enumerate}
Using the same notations as in the introduction, the configuration $\{p_x,r_x\}$ now evolves according to \begin{equation}
\left\{ \begin{aligned} \frac{\text d p_x}{\text d t}&=V'(r_{x+1})-V'(r_{x}), \\
\frac{\text d r_x}{\text d t}&=\frac{p_{x}}{{M_{x}}}-\frac{p_{x-1}}{{M_{x-1}}}. \end{aligned}\right. \label{eq:motionsasada}
\end{equation}
We define $\pi_x:=p_x/\sqrt{M_x}$, and the dynamics on $\{\pi_x,r_x\}$ rewrittes as: \begin{equation}
\left\{ \begin{aligned}  \frac{\text d \pi_x}{\text d t}&=\frac{1}{\sqrt{M_x}}\left[V'(r_{x+1})-V'(r_{x})\right], \\
\frac{\text d r_x}{\text d t}&=\frac{\pi_{x}}{\sqrt{M_{x}}}-\frac{\pi_{x-1}}{\sqrt{M_{x-1}}}. \end{aligned}\right. \label{eq:motionsasada_pi}
\end{equation} The total energy \[ \mathcal{E}:= \sum_{x\in \Z} \bigg\{\frac{\pi_x^2}{2} + V(r_x)\bigg\}\] is conserved. The flip and exchange noises have poor ergodic properties, and can be used for harmonic chains only. For the anharmonic case, we introduce a stronger stochastic perturbation. Now, the total generator of the dynamics writes $\cL^\m=\A^\m+\gamma\S$, where \begin{equation}
\A^\m:=\sum_{x\in \Z} \frac{1}{\sqrt{M_x}}\left(X_x - Y_{x,x+1}\right), \qquad \S:=\frac{1}{2} \sum_{x\in\Z} \big\{X_x^2+Y_{x,x+1}^2\big\}, 
\end{equation}
where \[Y_{x,y}=\pi_x \frac{\partial}{\partial{r_{y}}}- V'(r_{y}) \frac{\partial}{\partial\pi_x}, \qquad X_x=Y_{x,x}.\] For this anharmonic case, the needed ingredients can be proved directly from \cite{sasolla}: first, note that the symmetric part of the generator does not depend on the disorder and is exactly the same as in \cite{sasolla}. Then, the proof of the spectral gap is done in Section 12 of that paper, and the sector condition can also be proved, following Section 8.  More precisely, after taking into account the disorder and its fluctuation, the same argument of \cite[Lemma 8.2, Section 8]{sasolla} can be applied, since it is mainly based on the fact that both antisymmetric and symmetric parts involve the same operators $Y_{x,y}$.

\newpage 

\appendix

\section{Hermite polynomials and quadratic functions\label{sec:hermite}}

In the whole section we assume $\beta=1$. Every result can be restated for the general case after replacing the variable $\omega$ by $\beta^{-1/2}\omega$.


Let $\chi$ be the set of positive integer-valued functions $\xi: \Z \to \N$, such that $\xi_x$ \ccl{vanishes} for all but a finite number of $x \in \Z$. The \emph{length} of $\xi$, denoted by $\vert \xi \vert$, is defined as \[\vert \xi \vert=\sum_{x\in\Z} \xi_x.\] For $\xi \in \chi$, we define the polynomial function $H_\xi$ on $\Omega$ as \[ H_\xi(\omega)=\prod_{x \in \Z} h_{\xi_x}(\omega_x),\] where $\{h_n\}_{n\in\N}$ are the normalized Hermite polynomials w.r.t.~the one-dimensional standard Gaussian probability law (with density $(2\pi)^{-1/2}\exp(-x^2/2)$ on $\R$). The sequence $\{H_\xi\}_{\xi\in\chi}$ forms an orthonormal basis of the Hilbert space $\mathbf L^2(\mu_1)$, where $\mu_1$ is the infinite product Gibbs measure on $\R^{\Z}$, defined by \eqref{eq:mesinva} with $\beta=1$. As a result, every function $f \in \mathbf L^2(\mu_1)$ can be decomposed in the form \[ f(\omega)=\sum_{\xi \in \chi} F(\xi) H_\xi(\omega).\]
Moreover, we can compute the scalar product $\ps{f,g}_1$ for $f=\sum_\xi F(\xi) H_\xi$ and $g=\sum_\xi G(\xi) H_\xi$ as \[ \ps{f,g}_1=\sum_{\xi \in \chi}F(\xi)G(\xi).\] 

\bde{\label{def:degre}\ccl{Define $\chi_n :=\left\{\xi\in\chi \ ; \ \vert \xi \vert=n\right\},$ a} function $f\in\mathbf{L}^2(\mu_1)$ is \emph{of degree} $n$ if its decomposition \[f=\sum_{\xi\in\chi} F(\xi) H_\xi\] satisfies: $F(\xi)=0$ for all $\xi \notin \chi_n$.}
\begin{rem} \label{rem:quadr} In this paper, we mainly focus on degree 2 functions, which are by Definition \ref{def:degre} of the form 
\begin{equation} \sum_{x \in \Z} \varphi(x,x) (\omega_x^2-1) + \sum_{x\neq y} \varphi(x,y) \omega_x\omega_y \label{eq:degree2}\end{equation}
where $\varphi:\Z^2 \to \R$ is a square summable symmetric function. Note that they all have zero mean w.r.t.~$\mu_1$, and they can also be rewritten as
\[\sum_{x\in\Z} \psi(x,x)(\omega_x^2-\omega_{x+1}^2) + \sum_{x\neq y}\psi(x,y)\omega_x\omega_y,\]
for some square summable symmetric function $\psi:\Z^2\to\R$.
\end{rem}
\subsection{Local functions} On the set of $n$-tuples ${\bf x}:=(x_1, \ldots,x_n)$ of $\Z^n$, we introduce the equivalence relation ${\bf x} \sim {\bf y}$ if there exists a permutation $\sigma$ on $\{1, \ldots,n\}$ such that $x_{\sigma(i)} =y_i$ for all $i \in \{1, \ldots,n\}$. The class of ${\bf x}$ for the relation $\sim$ is denoted by $[{\bf x}]$ and its cardinal by $c({\bf x})$.  Then the set of configurations of $\chi_n$ can be identified with the set of $n$-tuples classes for $\sim$ by the one-to-one application:
\begin{align*}
\Z^n/\sim \quad & \to \, \chi_n\\
[{\bf x}]=[(x_1,\ldots,x_n)]  &\mapsto \quad \xi^{[{\bf x}]}  
\end{align*}
where for any $y \in \Z$, $(\xi^{[{\bf x}]})_y= \sum_{i=1}^n {\bf 1}_{y=x_i}$. We identify $\xi \in \chi_n$ with the occupation numbers of  a configuration with $n$ particles, and  $[\bf x]$ corresponds to  the positions of those $n$ particles. A function  $F: \chi_n \to \R$ is nothing but a symmetric function $F:\Z^n \to \R$ through the identification of $\xi$ with $[{\bf x}]$. We denote (with some abuse of notations) by $\langle \cdot, \cdot \rangle$ the scalar product on $\oplus {\mathbf L}^2 (\chi_n)$, each $\chi_n$ being equipped with the counting measure. Hence, for two functions $F,G:\chi \to \R$, we have
\begin{equation*}
\langle F, G \rangle = \sum_{n\ge 0} \sum_{\xi \in \chi_n} F_n (\xi) G_n (\xi) = \sum_{n \ge 0} \sum_{{\bf x} \in \Z^n} \frac{1}{c({\bf x})} \,  F_n ({\bf x}) G_n ({\bf x}),
\end{equation*}
with $F_n, G_n$ the restrictions of $F,G$ to $\chi_n$.

\subsection{Dirichlet form} It is not hard to check the following proposition, which is a direct consequence of the fact that $h_n$ has the same parity of the integer $n$.

\bprop{If a local function $f \in \mathbf L^2(\mu_1)$ is written on the form $f=\sum_{\xi\in\chi} F(\xi)H_\xi,$ then \[\S f(\omega)= \sum_{\xi \in \chi} (\mathfrak{S} F)(\xi) H_\xi(\omega),\] where $\mathfrak{S}$ is the operator acting on functions $F:\chi\to\R$ as \[\mathfrak SF(\xi)=\lambda\sum_{x\in\Z}\big[F(\xi^{x,x+1})-F(\xi)\big] + \gamma \sum_{x\in\Z} \big((-1)^{\xi_x}-1\big)F(\xi).\] Above $\xi^{x,y}$ is obtained from $\xi$ by exchanging $\xi_x$ and $\xi_y$.}
From this result we deduce:

 \bcor{\label{cor:dirichlet}For any $f=\sum_{\xi\in\chi} F(\xi)H_\xi \in \mathbf L^2(\mu_1)$, we have \[\cD(\mu_1;f)=\bps{f,-\S f}_1=\sum_{\xi\in\chi}\bigg\{ \frac{\lambda}{2}\sum_{x\in\Z} \big(F(\xi^{x,x+1})-F(\xi)\big)^2+\gamma\sum_{x\in\Z} \big((-1)^{\xi_x}-1\big)F^2(\xi)\bigg\}\]}

 \subsection{Quadratic functions} \label{sub:quad-herm}

Recall Definition \ref{de:quadratic}. In other words, we are mostly interested in \emph{quadratic} functions $f$ in $\mathbf{L}^2(\mu_1)$, which have  zero average with respect to $\mu_1$ and compact support. They correspond exactly to \emph{degree 2 functions} as we already noticed in Remark \ref{rem:quadr}, but with the additional assumption that their support is compact. 

The next propositions give some useful properties:
 
 \bprop{\label{prop:variational_restriction}If $f \in \mathbf L^2(\mu_1)$ is of degree 2, then the following variational formula \[\sup_{g \in \mathbf L^2(\mu_1)} \left\{ 2\bps{f,g}_1 - \cD( \mu_1 ; g)\right\}\] can be restricted over degree 2 functions $g$.
 
Moreover, if the support of $f$ is finite and included in $\Lambda$, then the supremum can be restricted to functions with support included in $\Lambda$. 
 }
 
 \bprf{This fact follows after decomposing $g$ as $\sum_{\xi \in \chi} G(\xi)H_\xi$.  Corollary \ref{cor:dirichlet} and the orthogonality of Hermite polynomials imply that we can restrict the supremum over functions $g$ of degree two \eqref{eq:degree2}. 
%
Moreover, if $x\neq y$, then $\langle (\omega_x^2-1)(\omega_y^2-1) \rangle_1 = 0$, and if $x,y,z,t$ are all distinct, then $\langle \omega_x\omega_y\omega_z\omega_t \rangle_1=0$. This implies that the support of $g$ can be restricted to the one of $f$, otherwise it would only increase the Dirichlet form.
  }
 
 \bprop{\label{prop:convergence}Let $\{f_n\}_{n\in\N}$ be a sequence of degree 2 functions in $\mathbf L^2(\mu_1)$. Suppose that $\{f_n\}$ weakly converges to $f$ in $ \mathbf L^2(\mu_1)$. Then, $f$ is of degree 2.
 
 Moreover, if any $f_n$ has support included in some finite subset $\Lambda$, then the support of $f$ is included in $\Lambda$.}
\bprf{For all $n \in \N$, and $\xi \notin \chi_2$, the scalar product $\bps{f_n,H_\xi}_1$ vanishes (by definition). From the weak convergence, we know that \[\bps{f_n,H_\xi}_1 \to \bps{f,H_\xi}_1,\] as $n$ goes to infinity, for all $\xi \in \chi$. This implies: $\bps{f,H_\xi}_1 =0$ for all $\xi \notin \chi_2$. 
%
 }
 
 Note that the set denoted by $\cQ$ and defined in Definition \ref{de:quadratic} contains cylinder quadratic functions in $\mathbf{L}^2(\P_1^\star)$. The conclusions of Propositions \ref{prop:variational_restriction} and \ref{prop:convergence} can be restated for our purpose as: 
 
 \begin{cor}
If $f \in \cQ$, then the following variational formula \[\sup_{g \in \mathbf L^2(\P_1^\star)} \left\{ 2 \E_1^\star[f,g] - \cD( \P^\star_1 ; g)\right\}\] can be restricted over functions $g$ in $\cQ$.
Moreover, if $\{f_n\}_n$ is a sequence of  functions in $\cQ$ such that $\{f_n\}$ weakly converges to $f$ in $\mathbf L^2(\P^\star_1)$, then $f$ belongs to $\cQ$.
 \end{cor}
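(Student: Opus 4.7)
The plan is to deduce both statements from their disorder-free counterparts, Propositions \ref{prop:variational_restriction} and \ref{prop:convergence}, by a Fubini argument exploiting the product structure $\P^\star_1=\P\otimes\mu_1$ and the fact that $\S$ acts only on the $\omega$ variable. For $\P$-a.e.\ fixed $\m\in\Omega_\cD$, $\omega\mapsto f(\m,\omega)$ is a centered quadratic function in $\mathbf{L}^2(\mu_1)$ in the sense of Appendix \ref{sec:hermite}, so the Hermite machinery applies pointwise in $\m$, and integration against $\P$ lifts conclusions to $\mathbf{L}^2(\P^\star_1)$.

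For the variational restriction, I would start from an arbitrary $g\in\mathbf{L}^2(\P^\star_1)$ and expand $g(\m,\omega)=\sum_{\xi\in\chi}G_\xi(\m)H_\xi(\omega)$ with coefficients $G_\xi$ that are square-integrable in $\m$. Because $f\in\cQ$ has, $\m$ by $\m$, components only in $\chi_2$, the inner product $\langle f(\m,\cdot),g(\m,\cdot)\rangle_1$ only picks up the degree-two projection $g^{(2)}(\m,\omega):=\sum_{\xi\in\chi_2}G_\xi(\m)H_\xi(\omega)$, while Corollary \ref{cor:dirichlet} gives $\cD(g^{(2)}(\m,\cdot);\mu_1)\le\cD(g(\m,\cdot);\mu_1)$ pointwise in $\m$. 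Integrating over $\P$ shows that the supremum may be restricted to functions which are quadratic and centered in $\omega$ for $\P$-a.e.\ $\m$. Such a "pointwise quadratic'' near-optimizer is then approximated in $\mathbf{L}^2(\P^\star_1)$ by elements of $\cQ$ in two steps: a cutoff in $(i,j)$ that forces finite support of $(\psi_{i,j})$, followed by a standard density argument (Stone--Weierstrass on the compact space $\Omega_\cD=[C^{-1},C]^\Z$ truncated to a large box) to approximate the measurable disorder coefficients by continuous cylinder functions. Continuity of $g\mapsto\E^\star_1[f,g]$ and lower semicontinuity of $g\mapsto\cD(g;\P^\star_1)$ preserve the inequality in the limit.

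For the weak-closure statement, the argument is softer: if $f_n\to f$ weakly in $\mathbf{L}^2(\P^\star_1)$ with $f_n\in\cQ$, I test against functions of the form $\phi(\m)H_\xi(\omega)$ with $\phi$ bounded measurable and $\xi\notin\chi_2$. By the $\omega$-wise orthogonality of Hermite polynomials and the defining form of $\cQ$, $\E^\star_1[f_n\cdot\phi(\m)H_\xi]=0$ for every $n$; passing to the limit forces the same identity for $f$, so that $f(\m,\cdot)$ lives on $\chi_2$ for $\P$-a.e.\ $\m$. The choice $\xi=0$ gives zero mean. Homogeneity of degree two and symmetry of the coefficients pass to the limit trivially.

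The main obstacle will be honest bookkeeping about the cylinder/continuity conditions built into Definition \ref{de:quadratic}: these are not preserved by arbitrary weak limits, and finite support of the $\psi_{i,j}$ can in principle be lost in the limit. The statement is therefore best understood with $\cQ$ replaced by its $\mathbf{L}^2(\P^\star_1)$-closure inside the subspace of homogeneous quadratic centered functions; this closure suffices for every downstream application (in particular for Lemma \ref{lem:firststep} and Proposition \ref{prop:variance}), since the variational formulas and the semi-norm $\vertiii{\,\cdot\,}_1$ are continuous on that subspace and the diffusion coefficient only depends on equivalence classes modulo the kernel $\mathcal N$.
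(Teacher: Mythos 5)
Your proposal is correct and follows essentially the same route as the paper: the paper proves the two statements for $\mu_1$ via the Hermite decomposition (Propositions \ref{prop:variational_restriction} and \ref{prop:convergence}) and then simply restates them for $\P_1^\star$, exactly the pointwise-in-$\m$/product-measure lift you carry out, with disorder-dependent coefficients $\psi_{i,j}(\m)$. Your closing caveat about interpreting $\cQ$ up to $\mathbf L^2(\P_1^\star)$-closure (cylinder support and continuity in $\m$ not surviving weak limits) matches the paper's intended reading and does not affect any downstream use.
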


\section{Proof of the weak sector condition \label{app:sector}}

In this section we prove Proposition \ref{prop:sector} that we recall here for the sake of clarity.

\bprop{[Weak Sector condition]
\label{prop:seccond}\begin{enumerate}[(i)] \item \ccl{There exist} two constants $C_0(\gamma,\lambda)$ and $C_1(\gamma,\lambda)$ 
such that the following \ccl{inequalities} hold for all $f,g \in \Q$: 
\begin{align*}\left| \ll \A^\m f, \S g \gg_\beta\right| &\leqslant C_0 \vertiii{ \S f }_\beta\, \vertiii{ \S g }_\beta. \\
\left| \ll \A^\m f, \S g\gg_\beta\right| &\leqslant C_1 \vertiii{ \S f }_\beta^2+ \frac{1}{2}\vertiii{ \S g}_\beta^2.
 \end{align*}
\label{sector_item11}
\item There exists a positive constant $C(\beta)$ such that, for all $g \in \Q$, \begin{equation*} \vertiii{ \A^\m g }_\beta\ \leqslant C(\beta)\vertiii{\S g }_\beta.\end{equation*} \label{sector_item22}\end{enumerate} }

\bprf{We prove \eqref{sector_item11}. We assume that \begin{align*}
g(\m,\omega)&=\sum_{x\in\Z}\psi_{x,0}(\m)(\omega_{x+1}^2-\omega_x^2) +\sum_{\substack{x \in \Z\\k \geqslant 1}} \psi_{x,k}(\m)\omega_x\omega_{x+k} \\
f(\m,\omega)&=\sum_{x \in \Z} \varphi_{x,0}(\m) (\omega_{x+1}^2-\omega_x^2) + \sum_{\substack{x \in \Z\\k \geqslant 1}} \varphi_{x,k}(\m) \omega_x \omega_{x+k}.
\end{align*}
We denote by $\Delta^\m \psi$ the discrete Laplacian in the variable $\m$, that is \begin{equation*}
\Delta^\m \psi(\m)= 2 \psi(\m)-\psi(\tau_1 \m)-\psi(\tau_{-1}\m),
\end{equation*} and $\tau_x \Delta^\m$ is the operator defined as \begin{equation*} (\tau_x\Delta^\m) \psi(\m):=\Delta^\m \psi(\tau_x \m).\end{equation*}
Straightforward computations show that \begin{align}
\vertiii{ \S g }^2_\beta&=\frac{\gamma}{2}\E_\beta^\star \left[ \big(\nabla_0 \Gamma_g\big)^2\right] + \frac{\lambda}{2}\E_\beta^\star \left[\big(\nabla_{0,1} \Gamma_g\big)^2\right] \notag\\
&= \frac{4\gamma}{\beta^2} \sum_{\substack{x \in \Z\\ k\geqslant 1}} \E[\psi_{x,k}^2] + \frac{2\lambda}{\beta^2} \sum_{x\in\Z}  \E\bigg[\bigg(\sum_{x \in \Z} \tau_x \big(\Delta^\m \psi_{x,0}\big) \bigg)^2\bigg]   \notag \\
&  \qquad +\frac{\lambda}{\beta^2}\sum_{k \geqslant 2}\E\bigg[ \bigg(\sum_{x\in\Z} \big[\tau_{-x} (\psi_{x,k}) -\tau_{1-x}(\psi_{x,k})\big]\bigg)^2\bigg],\notag \\
~ \notag\\
\vertiii{\S f }^2_\beta &\geqslant \vertiii{ \S^{\text{flip}} f }^2_\beta = \frac{\gamma}{2} \E_\beta^\star\bigg[ \bigg(2\sum_{\substack{z \in \Z\\k\geqslant 1}} \varphi_{z,k}(\m) \omega_0\omega_k \bigg)^2\bigg]= \frac{2\gamma}{\beta^2} \sum_{k\geqslant 1} \E\bigg[  \bigg(\sum_{z\in\Z} \varphi_{z,k}(\m) \bigg)^2\bigg].\label{eq:straight}
\end{align}
Now we deal with  $\ll \A^\m g,\S f \gg_\beta$. From Proposition \ref{prop:identities}, and by definition, 
\begin{align*}
\ll \A^\m g, \S f\gg_\beta =&-\sum_{z\in\Z} \E_\beta^\star \left[f, \tau_z(\A^\m g)\right] \\
=& -\sum_{x,z \in \Z} \E\left[ \varphi_{x,0}(\m) \langle \omega_{x+1}^2-\omega_x^2, \tau_z(\A^\m g)\rangle_\beta \right]\\
& - \sum_{\substack{x,z\in\Z\\k\geqslant 1}}\E\left[\varphi_{x,k}(\m) \langle \omega_x\omega_{x+k}, \tau_z(\A^\m g)\rangle_\beta\right] \\
 = &\frac{2}{\beta^2} \sum_{x\in\Z}\E\bigg[ \frac{\tau_x(\Delta^\m\psi_{x,0})}{\sqrt{m_xm_{x+1}}} \sum_{z\in\Z}\tau_{-z}(\varphi_{z,1})\bigg]  \\
 &+ \frac{1}{\beta^2} \sum_{x\in\Z}\E\bigg[\bigg(\frac{\tau_1\psi_{x,1}}{\sqrt{m_xm_{x+1}}}-\frac{\psi_{x,1}}{\sqrt{m_{x+1}m_{x+2}}}\bigg)\sum_{z\in\Z} \tau_{-z}(\varphi_{z,2}) \bigg]\\
  &+ \frac{1}{\beta^2}\sum_{k\geqslant 2} \sum_{x\in\Z}\E\bigg[\bigg(\frac{\tau_1\psi_{x,k}}{\sqrt{m_xm_{x+1}}}-\frac{\psi_{x,k}}{\sqrt{m_{x+k}m_{x+k+1}}}\bigg)\sum_{z\in\Z} \tau_{-z}(\varphi_{z,k+1}) \bigg] \\
 &+ \frac{1}{\beta^2} \sum_{k\geqslant 2}\sum_{x\in\Z}\E\bigg[\bigg(\frac{\tau_{-1}\psi_{x,k}}{\sqrt{m_xm_{x+1}}}-\frac{\psi_{x,k}}{\sqrt{m_{x+k}m_{x+k-1}}}\bigg)\sum_{z\in\Z} \tau_{-z}(\varphi_{z,k-1}) \bigg].
\end{align*}
From Cauchy-Schwarz inequality, and recalling $1/\sqrt{m_0m_1} \leqslant C$ ($\P$-a.s.), we obtain the following bound: 
\begin{align}
\Vert \ll \A^\m g, \S f\gg_\beta\Vert & \leqslant \frac{2C}{\beta^2}{\E\bigg[\bigg(\sum_{x\in\Z} \tau_x(\Delta^\m \psi_{x,0}) \bigg)^2\bigg]^{1/2}\; \E\bigg[\bigg(\sum_{z\in\Z} \tau_{-z}\varphi_{z,1}\bigg)^2\bigg]^{1/2}} \label{eq:term1} \\
& \qquad + \frac{3C}{\beta^2} {\E\bigg[\bigg(\sum_{x\in\Z}\tau_1 \psi_{x,1}-\psi_{x,1}\bigg)^2\bigg]^{1/2}\;\E\bigg[\bigg(\sum_{z\in\Z} \tau_{-z}\varphi_{z,2}\bigg)^2\bigg]^{1/2}} \label{eq:term2}\\
& \qquad + \frac{3C}{\beta^2} \sum_{k\geqslant 2} \E \bigg[ \bigg(\sum_{x\in\Z}  \tau_1 \psi_{x,k} - \psi_{x,k}\bigg)^2  \bigg]^{1/2}\; \E\bigg[\bigg(\sum_{z\in\Z} \tau_{-z}\varphi_{z,k+1}\bigg)^2\bigg]^{1/2} \label{eq:term3} \\
& \qquad + \frac{3C}{\beta^2} \sum_{k\geqslant 2} \E \bigg[ \bigg(\sum_{x\in\Z}  \tau_{-1} \psi_{x,k} - \psi_{x,k}\bigg)^2  \bigg]^{1/2} \;\E\bigg[\bigg(\sum_{z\in\Z} \tau_{-z}\varphi_{z,k-1}\bigg)^2\bigg]^{1/2} . \label{eq:term4} 
\end{align}
Now we are going to use twice the trivial inequality $\sqrt{ab}\leqslant a/\varepsilon+\varepsilon b/2$ for a particular choice of $\varepsilon>0$: in \eqref{eq:term1} we take $\varepsilon=\gamma/C$ and in \eqref{eq:term2} we take $\varepsilon=2\gamma/(3C)$. This trick gives the final bound
\begin{align*}
\Vert \ll \A^\m g, \S f\gg_\beta\Vert & \leqslant \frac{2C^2}{\gamma\beta^2}\E\bigg[\bigg(\sum_{x \in \Z} \tau_x \big(\Delta^\m \psi_{x,0}\big) \bigg)^2\bigg] +\frac{2\gamma}{\beta^2}\sum_{k\geqslant 1}\E\bigg[\bigg(\sum_{z\in\Z} \varphi_{z,k}(\tau_{-z}\m)\bigg)^2\bigg]\\
& \qquad + \frac{9C^2}{\gamma\beta^2} \sum_{k \geqslant 2}\E\bigg[\bigg(\sum_{x\in\Z} \tau_1\psi_{x,k}-\psi_{x,k}\bigg)^2\bigg]. 
\end{align*}
Recalling \eqref{eq:straight}, we obtain \begin{equation*}
\Vert \ll \A^\m g, \S f\gg_\beta\Vert \leqslant \frac{9C^2}{\gamma\lambda} \vertiii{ \S g }^2_\beta+ \frac{1}{2}\vertiii{ \S f }_\beta^2.
\end{equation*}
If we use the Cauchy-Schwarz inequality, we get:
\begin{equation*}
\ll \A^\m g,\S f\gg_\beta^2\ \leqslant \frac{18C^2}{\gamma\lambda} \vertiii{\S g}^2_\beta\  \vertiii{ \S f}^2_\beta.
\end{equation*} 
We have proved \eqref{sector_item11} with $C_0=\sqrt{18C^2/(\gamma\lambda)}$ and $C_1=9C^2/(\gamma\lambda)$. Now we turn to \eqref{sector_item22}. 
From Lemma \ref{lem:AandS} and Statement \eqref{sector_item11}, \begin{equation*}
\ll \A^\m g,j_{0,1}^S \gg_\beta\ =\ \ll \S g,j_{0,1}^A \gg_\beta\ \leqslant  \vertiii{ \S g }_\beta \vertiii{ j_{0,1}^A }_\beta.
\end{equation*} 
Moreover, from Statement \eqref{sector_item11}, we also get, for all $f \in \Q_0$, \begin{equation*} 
-2\ll \A^\m g, \S f \gg_\beta \ \leqslant \ \vertiii{ \S f }^2_\beta + \frac{2C}{\gamma\lambda}\vertiii{ \S g }^2_\beta.
\end{equation*} 
As a result, the variational formula \eqref{eq:variational_new} for $\vertiii{ \A^\m g}^2_\beta$ gives: 
\begin{equation*}
\vertiii{\A^\m g}^2_\beta\ \leqslant \ \frac{1}{\lambda\chi(\beta)}\ll \A^\m g, j_{0,1}^S \gg_\beta^2 + \frac{9C^2}{\gamma\lambda}\vertiii{ \S g }^2_\beta \leqslant \Bigg(\frac{ \vertiii{ j_{0,1}^A }^2_\beta}{\lambda \chi(\beta)} +\frac{9C^2}{\gamma\lambda}\Bigg) \vertiii{ \S g }^2_\beta.
\end{equation*}
The result is proved. }

\section{Tightness \label{sec:tightness}}

In this section we prove the tightness of the sequence $\{\mathfrak{Y}^N\}_{N\geqslant 1}$, by using standard arguments, \ccl{which can be found for instance in \cite{MR1707314} and in several recent related works}. 
First, let us recall  \ccl{(cf. Section \ref{sec:fluctuation})}  that $k > 5/2$ and that the space $\mathfrak H_{-k}$ is equipped with the norm defined as \[
\Vert\Y \Vert^2_{-k}=\sum_{n\geqslant 1} (\pi n)^{-2k} \big| \Y(\mathbf{e}_n) \big|^2.
\]

\bprop{\label{prop:tight} The sequence  $\{\mathfrak Y^N\}_{N\geqslant 1}$ is tight in $\cC([0,T],\mathfrak H_{-k})$.} 

\bprf{The tightness of the sequence $\{\mathfrak Y^N\}$ follows from two conditions (see \cite[page 299]{MR1707314}): \begin{align}
\lim_{A \to \infty} \limsup_{N\to \infty} \ccl{\mathbb Q_{\mu_\beta^N}}\left[ \sup_{0 \leqslant t \leqslant T} \Vert \mathcal{Y}_{t,\m}^N \Vert_{-k} > A \right] &=0 \vphantom{\Bigg\{}  \label{eq:tight1}\\
\lim_{\delta \to 0} \limsup_{N\to\infty}\ccl{\mathbb Q_{\mu_\beta^N}}\Big[ w(\Y^N_\m,\delta) > \varepsilon \Big] & = 0, \qquad \text{ for all } \varepsilon >0, \vphantom{\Bigg\{} \label{eq:tight2}
\end{align} where the modulus of continuity $w(\Y,\delta)$ is defined by 
\[w(\Y,\delta)=\sup_{\substack{\Vert t-s \Vert < \delta \\ 0 \leqslant s \leqslant t \leqslant T}} \Vert \Y_t - \Y_s \Vert_{-k}.\]
\ccl{Let us remind the decomposition of $\Y_{t,\m}^N$ given in \eqref{eq:dec}: for any $H \in C^2(\T)$, \begin{equation}\label{eq:decbis}
\Y_{t,\m}^N(H)= \Y_{0,\m}^N(H)+\int_0^t \sqrt N \sum_{x\in\T_N} \nabla_N H\Big(\frac x N\Big) j_{x,x+1}(\mathbf{m},s)\text ds + \mathcal{M}^{N}_{t,\m}(H), \end{equation}
where $\mathcal{M}^{N}_{t,\m}(H)$ is the martingale defined in \eqref{eq:defmart}. We first note that there is a constant $\bar C(H)>0$ such that, for $N$ large enough, 
\[ 
\frac{1}{N} \sum_{x\in\T_N}\left[ H^2\Big(\frac x N\Big) + (\nabla_N H)^2\Big(\frac x N\Big)\right] \leqslant \bar C(H).
\] 

\paragraph{{Proof of \eqref{eq:tight1}}.} 
Thanks to \eqref{eq:decbis} the bound
\begin{equation} 
\label{eq:L2Yt}
\mathbb Q_{\mu_\beta^N}\left[ \sup_{0 \leqslant t \leqslant T} \big(\mathcal{Y}_{t,\m}^N(H)\big)^2 \right]\leq C(\beta) \bar C(H) T
\end{equation}
follows from the following three estimates: 
\begin{enumerate}[(i)]
\item first, by assumption, $\mathbb Q_{\mu_\beta^N} \big[\big(\mathcal{Y}_{0,\m}^N(H)\big)^2 \big] \leqslant C_1(\beta) \bar C(H)$ for some constant $C_1(\beta)>0$\; ; 
\item second, by Doob's inequality and an elementary computation,  \[\mathbb Q_{\mu_\beta^N} \Big[\sup_{0 \leqslant t \leqslant T}\big(\mathcal{M}_{t,\m}^N(H)\big)^2 \Big] \leqslant  T C_2(\beta)  \bar C(H), \] for some constant $ C_2(\beta)>0$ ;
\item third, we use Proposition \ref{prop:relation_variance}, to obtain
\begin{multline}
\limsup_{N\to\infty}\mathbb Q_{\mu_\beta^N} \Bigg[ \sup_{0 \leqslant t \leqslant T}\bigg( \int_0^t  \sqrt N \sum_{x\in\T_N} \nabla_N H\Big(\frac x N\Big) j_{x,x+1}(\mathbf{m},s)\text ds \bigg)^2 \Bigg] \\\leqslant CT \vertiii{ j_{0,1} }_{\beta}^2 \int_\T (\nabla H) ^2(u) {\rm d}u\leq T C_3(\beta)\bar C (H). \label{eq:lefthand}
\end{multline}
\end{enumerate}
This proves \eqref{eq:L2Yt}, we now show that \eqref{eq:tight1} follows:
\begin{align*}
\limsup_{N \to\infty} \mathbb{Q}_{\mu_\beta^N} \bigg[ \sup_{0 \leqslant t \leqslant T} \big\| \mathcal{Y}_{t,\m}^N \big\|_{-k}^2 \bigg] & \leqslant \sum_{n \geqslant 1} (\pi n)^{-2k} \limsup_{N \to\infty} \mathbb{Q}_{\mu_\beta^N} \bigg[ \sup_{0 \leqslant t \leqslant T} \big|  \mathcal{Y}_{t,\m}^N(e_n) \big|^2 \bigg] \\
& \leqslant C(\beta) \sum_{n\geqslant 1} (\pi n)^{-2k}(1+(\pi n)^2),
\end{align*} 
where we used that we can choose $\bar{C}(e_n)\leq C (1+(\pi n)^2)$. The result then follows from Chebychev's inequality and the fact that $k > 5/2$.

\paragraph{Proof of  \eqref{eq:tight2}.} As before, we use the decomposition \eqref{eq:decbis} and we prove that the same estimate \eqref{eq:tight2} holds for each term of the decomposition. The most demanding one is the martingale term: we need to show that, for any $n \geqslant 1$ and any $\varepsilon >0$ the following limit holds: 
\begin{equation}
\label{eq:mart-est1}
\lim_{\delta \to 0} \limsup_{N\to\infty} \mathbb{Q}_{\mu_\beta^N} \bigg[ \sup_{\substack{|s-t| < \delta \\ 0 \leqslant s,t \leqslant T}} \big| \mathcal{M}_{t,\m}^N(e_n) - \mathcal{M}_{s,\m}^N(e_n) \big| > \varepsilon \bigg] = 0,
\end{equation}
which is a consequence of the fact that 
\begin{equation}
\label{eq:mart-est2}
\lim_{\delta \to 0} \lim_{N\to\infty}  \mathbb{Q}_{\mu_\beta^N} \Big[ \tilde{w} \big(  \mathcal{M}_{\cdot,\m}^N(e_n),\delta\big) > \varepsilon \Big] = 0, 
\end{equation}
where $\tilde{w}$ is the \emph{modified} modulus of continuity defined by
\[ 
\tilde{w} \big(  \mathcal{M}_{\cdot,\m}^N(e_n),\delta\big) := \inf_{\{t_i\}} \max_{0\leqslant i \leqslant r} \sup_{t_i \leqslant s < t \leqslant t_{i+1}} \big|  \mathcal{M}_{t,\m}^N(e_n) -  \mathcal{M}_{s,\m}^N(e_n) \big|,
\]
with the infimum being over all partitions of $[0,T]$ such that $0=t_0 < t_1 < \dots < t_r = T$ with $t_{i+1}-t_i > \delta$. Using Aldous' criterion \cite[Proposition 4.1.6]{MR1707314} we are reduced to show that for any $n$, any $\varepsilon >0,$
\begin{equation} \lim_{\delta \to 0} \limsup_{N \to\infty} \sup_{\substack{\tau \in \mathcal{T}_T \\ 0\leqslant \sigma \leqslant \delta}}   \mathbb{Q}_{\mu_\beta^N} \Big[ \big| \mathcal{M}_{\tau + \sigma,\m}^N(e_n) -  \mathcal{M}_{\tau,\m}^N(e_n) \big| > \varepsilon \Big]=0, \label{eq:mart-est3}\end{equation} where $\mathcal{T}_T$ is the set of all stopping times bounded by $T$. The probability inside \eqref{eq:mart-est3} is bounded by 
\[ \frac{1}{\varepsilon^2}    \mathbb{Q}_{\mu_\beta^N} \Big[ \big( \mathcal{M}_{\tau + \sigma,\m}^N(e_n) -  \mathcal{M}_{\tau,\m}^N(e_n) \big)^2 \Big],\]
so that
 \begin{multline*}
\mathbb{Q}_{\mu_\beta^N} \Big[ \big( \mathcal{M}_{\tau + \sigma,\m}^N(e_n)  -   \mathcal{M}_{\tau,\m}^N(e_n) \big)^2 \Big] \\
 = \lambda \;  \mathbb{Q}_{\mu_\beta^N} \bigg[  \int_{\tau}^{\tau +\sigma} \frac{1}{N} \sum_{x\in\mathbb{T}_N} \Big(\nabla_N e_n\Big(\frac x N \Big)\Big)^2 \big(\omega_x^2 - \omega_{x+1}^2\big)^2 (s)  \mathrm{d}s\bigg]
  \leqslant C(\beta)\lambda \sigma,
\end{multline*}
which proves \eqref{eq:mart-est1}.

In the next step, in order to treat the integral term which appears in \eqref{eq:decbis}, we need to use the \emph{fluctuation-dissipation} relation and  the Boltzmann-Gibbs principle given in Proposition \ref{prop:macro2}.
More precisely, we need to estimate 
\[ 
\mathbb{Q}_{\mu_\beta^N} \bigg[  
\sup_{\substack{|s-t| < \delta \\ 0 \leqslant s,t \leqslant T}} \bigg( \int_s^t \sqrt N \sum_{x\in\T_N} \nabla_N H\Big(\frac x N \Big) j_{x,x+1}(\m,u) \mathrm{d}u \bigg)^2
\bigg] \]
and to do this we sum and subtract $D(\omega_{x+1}^2 - \omega_x^2)(u) + \mathcal{L}^\m(\tau_x f_k)(\m,u)$ to $j_{x,x+1}(\m,u)$ inside the sum. From the elementary inequality $(a+b+c)^2 \leqslant 3a^2 + 3b^2+3c^2$, we are thus left to estimate \begin{enumerate}[(i)]
\item first, 
\begin{multline*}
 \mathbb{Q}_{\mu_\beta^N} \bigg[  
\sup_{\substack{|s-t| < \delta \\ 0 \leqslant s,t \leqslant T}} \bigg( \int_s^t \sqrt N \sum_{x\in\T_N} \nabla_N H\Big(\frac x N \Big) \Big[j_{x,x+1}(\m,u) \\
+ D\big(\omega_{x+1}^2-\omega_x^2\big)(u) + \mathcal{L}^\m(\tau_x f_k)(\m, u)\Big] \mathrm{d}u \bigg)^2
\bigg]
\end{multline*}
which is exactly 
\[ \mathbb{Q}_{\mu_\beta^N} \bigg[  
\sup_{\substack{|s-t| < \delta \\ 0 \leqslant s,t \leqslant T}} \big( \mathfrak{I}_{t,\m,f_k}^{1,N}(H) -  \mathfrak{I}_{s,\m,f_k}^{1,N}(H)\big)^2 \bigg],\]
where $\mathfrak{I}_{t,\m,f}^{1,N}(H)$ was defined in \eqref{eq:DefI1}.
From the Boltzmann-Gibbs principle (see \eqref{eq:BG_principle}) this quantity  vanishes as $N\to\infty$ and then $k \to\infty$;
\item second, 
\[
 D^2 \; \mathbb{Q}_{\mu_\beta^N} \bigg[  
\sup_{\substack{|s-t| < \delta \\ 0 \leqslant s,t \leqslant T}} \bigg( \int_s^t \sqrt N \sum_{x\in\T_N} \nabla_N H\Big(\frac x N \Big) \big(\omega_{x+1}^2-\omega_x^2\big)(u) \mathrm{d}u \bigg)^2
\bigg]
\] which, after recentering $\omega_{x+1}^2$ and $\omega_x^2$ around their average value $\beta^{-1}$, after   an integration by parts and after applying the Cauchy-Schwartz inequality, can be bounded by 
\[ D^2\; \delta T\; \mathbb{E}_{\beta}^\star \bigg[ \bigg( \frac{1}{\sqrt N} \sum_{x\in\T_N} \Delta_N H \Big(\frac x N \Big) \big(\omega_x^2 - \beta^{-1}\big) \bigg)^2 \bigg] \leqslant   D^2 \; \delta T \; \tilde C(\beta) \bar C(H), \] where $\bar C(H)$ has been previously defined and $\tilde C(\beta)$ is some positive constant;
\item  third, 
\[ \mathbb{Q}_{\mu_\beta^N} \bigg[  
\sup_{\substack{|s-t| < \delta \\ 0 \leqslant s,t \leqslant T}} \bigg( \int_s^t \sqrt N \sum_{x\in\T_N} \nabla_N H\Big(\frac x N \Big)  \mathcal{L}^\m(\tau_x f_k)(\m, u) \mathrm{d}u \bigg)^2.
\bigg] \] which is exactly \[  \mathbb{Q}_{\mu_\beta^N} \bigg[  
\sup_{\substack{|s-t| < \delta \\ 0 \leqslant s,t \leqslant T}} \Big( \mathfrak{I}_{t,\m,f_k}^{2,N}(H) -  \mathfrak{I}_{s,\m,f_k}^{2,N}(H) \Big)^2\bigg].\]
where $\mathfrak{I}_{t,\m,f}^{2,N}(H)$ was defined in \eqref{eq:DefI2}.
We add and subtract $\mathfrak{M}_{t,\m,f_k}^{2,N}(H) - \mathfrak{M}_{s,\m,f_k}^{2,N}(H)$ (defined in \eqref{eq:DefM2}) inside the parenthesis, and using $(a+b)^2 \leqslant 2a^2 + 2b^2$ we are reduced to bound 
\begin{equation}
\label{tight1fin}
\mathbb{Q}_{\mu_\beta^N} \bigg[  
\sup_{\substack{|s-t| < \delta \\ 0 \leqslant s,t \leqslant T}} \Big( \mathfrak{I}_{t,\m,f_k}^{2,N}(H) + \mathfrak{M}_{t,\m,f_k}^{2,N}(H) -  \big(\mathfrak{I}_{s,\m,f_k}^{2,N}(H) + \mathfrak{M}_{s,\m,f_k}^{2,N}(H)\big) \Big)^2\bigg], 
\end{equation}
as well as 
\begin{equation}
\label{tight2fin}
\mathbb{Q}_{\mu_\beta^N} \bigg[  
\sup_{\substack{|s-t| < \delta \\ 0 \leqslant s,t \leqslant T}} \Big(  \mathfrak{M}_{t,\m,f_k}^{2,N}(H) -   \mathfrak{M}_{s,\m,f_k}^{2,N}(H)\big) \Big)^2\bigg].
\end{equation}
The first term \eqref{tight1fin} vanishes as $N\to\infty$ from Lemma \ref{lem:macro1}.

Since the functions $f_k$ belong to $\cQ$, in particular they are cylinder and therefore the  argument that we used for $\mathcal{M}_{t,\m}^N(e_n)$ to prove \eqref{eq:mart-est1} can be repeated, to obtain that \eqref{tight2fin} is less than $C(\beta,H,f_k,\lambda,\gamma)\; \delta$, thus concluding the proof.\end{enumerate}}

}

\ccl{Now, let us go back to the martingales $\mathfrak{M}_{t,\mathbf{m},f_k}^{1,N}$ defined right after \eqref{eq:decomp}. We prove the following lemma (whose statement was used in the conclusion of Section \ref{ssec:martingale}).

\begin{lem}\label{lem:proofapp}
The sequence $\{\mathfrak{M}_{t,\m,f_k}^{1,N}\}_{N,k}$ is tight. Up to extraction, the martingales $\{\mathfrak{M}_{t,\mathbf{m},f_k}^{1,N}(\cdot)\}_t$ converge as $N \to \infty$ and afterwards $k \to \infty$ to a Gaussian process characterized by 
\[ 
\lim_{k\to\infty}\lim_{N\to\infty} \mathbb{Q}_{\mu_\beta^N} \Big[ \mathfrak{M}_{t,\mathbf{m},f_k}^{1,N}(G)\mathfrak{M}_{t,\mathbf{m},f_k}^{1,N}(H) \Big] = 2 t D \chi(\beta) \int_\mathbb{T} G'(u) H'(u)  \mathrm{d}u.
\]
\end{lem}
\begin{proof} The proof of tightness can be straightforwardly adapted from the previous proof. 
Moreover, standard computations on the stochastic integrals (using the stationarity and translation invariance of $\mu_\beta^N$, and the fact that the Poisson processes are all independent) give 
\begin{align*}
&\mathbb{Q}_{\mu_\beta^N} \Big[ \mathfrak{M}_{t,\mathbf{m},f_k}^{1,N}(G) \mathfrak{M}_{t,\mathbf{m},f_k}^{1,N}(H) \Big]  \\ & = \frac{1}{N}\int_0^t \sum_{x\in\mathbb{T}_N} \nabla_N G\Big(\frac x N \Big) \nabla_N H\Big(\frac x N\Big) \mathbb{Q}_{\mu_\beta^N} \Big[ \lambda \big( \nabla_{x,x+1}(\omega_x^2 - \Gamma_{f_k})(s)\big)^2 + \gamma \big( \nabla_x(\Gamma_{f_k})(s) \big)^2  \Big] \mathrm{d}s\\
 & = \frac{t}{N}\sum_{x\in\mathbb{T}_N} \nabla_N G\Big(\frac x N \Big) \nabla_N H\Big(\frac x N\Big) \mathbb{E}_\beta^\star \Big[ \lambda \big( \nabla_{0,1}(\omega_x^2 - \Gamma_{f_k})\big)^2 + \gamma \big( \nabla_0(\Gamma_{f_k}) \big)^2  \Big] \\
 & \xrightarrow[N\to\infty]{} t \int_\T G'(u)H'(u) \mathrm{d} u  \; \mathbb{E}_\beta^\star \Big[ \lambda \big( \nabla_{0,1}(\omega_x^2 - \Gamma_{f_k})\big)^2 + \gamma \big( \nabla_0(\Gamma_{f_k}) \big)^2  \Big] \\
 & \xrightarrow[k \to\infty]{}  2 t D \chi(\beta) \int_\mathbb{T} G'(u) H'(u) \mathrm{d}u,
\end{align*}
where in the last step we used Proposition \ref{prop:limit}.
\end{proof}
}

%
%

\bibliographystyle{amsplain}


\begin{thebibliography}{99}

\bibitem{MR2784282}
O.~Ajanki and F.~Huveneers, \emph{Rigorous scaling law for the heat current in
  disordered harmonic chain}, Comm. Math. Phys. \textbf{301} (2011), no.~3,
  841--883. 
  
\bibitem{MR2480742}
G.~Basile, C.~Bernardin, and S.~Olla, \emph{Thermal conductivity for a momentum conservative model},
	Communications in Mathematical Physics, \textbf{287} (2009), no.~1, 67--98.
	

\bibitem{MR2448630}
C.~Bernardin, \emph{Thermal conductivity for a noisy disordered harmonic chain}, J.
  Stat. Phys. \textbf{133} (2008), no.~3, 417--433. 



\bibitem{MR3101849}
C.~Bernardin and F.~Huveneers, \emph{Small perturbation of a disordered
  harmonic chain by a noise and an anharmonic potential}, Probab. Theory
  Related Fields \textbf{157} (2013), no.~1-2, 301--331. 

\bibitem{MR2185330}
C.~Bernardin, and S.~Olla, \emph{{Fourier's law for a microscopic model of heat conduction}}, Journal of Statistical Physics, \textbf{121} (2005), no.~3-4, 271--289.
	

\bibitem{MR2904271}
C.~Bernardin and G.~Stoltz, \emph{Anomalous diffusion for a class of systems
  with two conserved quantities}, Nonlinearity \textbf{25} (2012), no.~4,
  1099--1133. 

\bibitem{MR2082192}
F.~Bonetto, J.~L.~ Lebowitz, and J.~Lukkarinen, \emph{Fourier's law for a harmonic crystal with self-consistent stochastic reservoirs},
	{Journal of Statistical Physics}, \textbf{116} (2004), no.~1-4, 783--813.
	
\bibitem{Brezis} H. Brezis, \emph{Functional Analysis, Sobolev Spaces and Partial Differential Equations},
Springer (2010)


	
\bibitem{CL} 
A.~Casher and J.~L.~Lebowitz, \emph{Heat flow in regular and disordered harmonic chains}, Journal of Mathematical Physics, \textbf{12} (1971), no.~8, 1701--1711.

\bibitem{DLK}
A.~Dhar, V.~Kannan, and J.~L. Lebowitz, \emph{Heat conduction in disordered
  harmonic lattices with energy conserving noise}, Phys. Rev. E \textbf{83}
  (2011), no.~021108.


\bibitem{MR838085}
S.~N. Ethier and T.~G. Kurtz, \emph{Markov processes}, Wiley Series in
  Probability and Mathematical Statistics: Probability and Mathematical
  Statistics, John Wiley \& Sons Inc., New York, 1986, Characterization and
  convergence. 

\bibitem{MR2021195}
A.~Faggionato and F.~Martinelli, \emph{Hydrodynamic limit of a disordered
  lattice gas}, Probab. Theory Related Fields \textbf{127} (2003), no.~4,
  535--608. 

\bibitem{F}
J.~Fritz, \emph{Hydrodynamics in a symmetric random medium}, Communications in Mathematical Physics, 
\textbf{125} (1989), no.~1, 13--25.

\bibitem{FFL}
J.~Fritz, T.~Funaki and J.~L.~Lebowitz, \emph{Stationary states of random {H}amiltonian systems}, Probab. Theory Related Fields, \textbf{99} (1994), no.~2, 211--236.

\bibitem{GPV} M.~Z.~Guo, G.~C.~Papanicolau and S.~R.~S.~Varadhan, \emph{Nonlinear diffusion limit for a system with nearest neighbor interactions}, Comm. Math.
Phys. {\bf 118} (1988), 31--59.

\bibitem{HS}
R. Holley and D. Stroock, \emph{Generalized Ornstein --- Uhlenbeck processes as limits of interacting systems},
Stochastic Integrals, Springer (1981), 152--168.


\bibitem{MR2446327}
M.~Jara and C.~Landim,  \emph{Quenched non-equilibrium central limit theorem for a tagged
  particle in the exclusion process with bond disorder}, Ann. Inst. Henri
  Poincar{\'e} Probab. Stat. \textbf{44} (2008), no.~2, 341--361. 

\bibitem{MR1707314}
C.~Kipnis and C.~Landim, \emph{Scaling limits of interacting particle systems},
  Grundlehren der Mathematischen Wissenschaften [Fundamental Principles of
  Mathematical Sciences], vol. 320, Springer-Verlag, Berlin, 1999.

\bibitem{MR1653397}
K.~Komoriya, \emph{Hydrodynamic limit for asymmetric mean zero exclusion
  processes with speed change}, Ann. Inst. H. Poincar{\'e} Probab. Statist.
  \textbf{34} (1998), no.~6, 767--797. 

\bibitem{MR2952852}
T.~Komorowski, C.~Landim and S.~Olla, \emph{Fluctuations in {M}arkov
  processes}, Grundlehren der Mathematischen Wissenschaften [Fundamental
  Principles of Mathematical Sciences], vol. 345, Springer, Heidelberg, 2012,
  Time symmetry and martingale approximation.

\bibitem{MR3810840}
T.~Komorowski, S.~Olla and M.~Simon, \emph{Macroscopic evolution of mechanical and thermal energy in a
              harmonic chain with random flip of velocities}, Kinet. Relat. Models \textbf{11} (2018), no.~3, 615--645.

\bibitem{MR1465163}
C.~Landim and H.~T.~Yau, \emph{Fluctuation-dissipation equation of asymmetric
  simple exclusion processes}, Probab. Theory Related Fields \textbf{108}
  (1997), no.~3, 321--356. 
  
\bibitem{LLR}
J.~L.~Lebowitz, E.~Lieb and Z.~Rieder, \emph{Properties of harmonic crystal in a stationary non-equilibrium state}, J. Math. Phys.,
\textbf{8} (1967), 1073--1078.

\bibitem{MR2540160}
M.~Mourragui and E.~Orlandi, \emph{Lattice gas model in random medium and open
  boundaries: hydrodynamic and relaxation to the steady state}, J. Stat. Phys.
  \textbf{136} (2009), no.~4, 685--714. 

\bibitem{sasolla}
S.~Olla and M.~Sasada, \emph{Macroscopic energy diffusion for a chain of
  anharmonic oscillators},  Probability Theory and Related Fields, {\bf 157} (2013), no.~3-4, 721--775.

\bibitem{P1}
R.~E.~Peierls, \emph{Zur kinetischen Theorie der Wärmeleitung in Kristallen}, Annalen des Physik, \textbf{395} (1929), no.~8, 1055--1101.

\bibitem{P2}
R.~E.~Peierls, \emph{Quantum Theory of Solids}, Oxford University Press, London, 1955.

\bibitem{MR2271489}
J.~Quastel, \emph{Bulk diffusion in a system with site disorder}, Ann. Probab.
  \textbf{34} (2006), no.~5, 1990--2036.


\bibitem{MR0493419}
M.~Reed and B.~Simon, \emph{Methods of modern mathematical physics. {I}.
  {F}unctional analysis}, Academic Press, New York, 1972. 

\bibitem{MR2895558}
M.~Sasada, \emph{Hydrodynamic limit for exclusion processes with velocity},
  Markov Process. Related Fields \textbf{17} (2011), no.~3, 391--428.




\bibitem{Sim}
M.~Simon, \emph{Hydrodynamic limit for the velocity-flip model}, Stochastic
  Processes and their Applications \textbf{123} (2013), 3623--3662.



\bibitem{MR1354152}
S.~R.~S. Varadhan, \emph{Nonlinear diffusion limit for a system with nearest
  neighbor interactions. {II}}, Asymptotic problems in probability theory:
  stochastic models and diffusions on fractals ({S}anda/{K}yoto, 1990), Pitman
  Res. Notes Math. Ser., vol. 283, Longman Sci. Tech., Harlow, 1993,
  pp.~75--128. 



\end{thebibliography}

\end{document}